\theoremstyle{definition}
\newtheorem{definition}{Definition}[section]
\theoremstyle{plain}
\newtheorem{lemma}[definition]{Lemma}
\newtheorem{theorem}[definition]{Theorem}
\newtheorem{proposition}[definition]{Proposition}
\newtheorem{corollary}[definition]{Corollary}
\theoremstyle{remark}
\newtheorem{remark}[definition]{Remark}
\newtheorem{example}[definition]{Example}
\newcommand{\bp}{\mathbf{p}}
\newcommand{\bC}{\mathbb{C}}
\newcommand{\bF}{\mathbb{F}}
\newcommand{\bL}{\mathbb{L}}
\newcommand{\bN}{\mathbb{N}}
\newcommand{\bP}{\mathbb{P}}
\newcommand{\bQ}{\mathbb{Q}}
\newcommand{\bR}{\mathbb{R}}
\newcommand{\bZ}{\mathbb{Z}}
\newcommand{\calB}{\mathcal{B}}
\newcommand{\calC}{\mathcal{C}}
\newcommand{\calF}{\mathcal{F}}
\newcommand{\calO}{\mathcal{O}}
\DeclareMathOperator{\Hom}{Hom}
\DeclareMathOperator{\NS}{NS}
\DeclareMathOperator{\rank}{rank}
\DeclareMathOperator{\Span}{Span}
\numberwithin{table}{section}
\begin{document}

\title[Pseudolattices, del Pezzo Surfaces, and Lefschetz Fibrations]{Pseudolattices, del Pezzo Surfaces, and Lefschetz Fibrations}

\author[A. Harder]{Andrew Harder}
\address{Department of Mathematics, Lehigh University, Christmas-Saucon Hall, 14 E. Packer Ave, Bethlehem, PA, 18015, USA}
\email{anh318@lehigh.edu}
\thanks{A. Harder was partially supported by the Simons Collaboration Grant in \emph{Homological Mirror Symmetry}.}

\author[A. Thompson]{Alan Thompson}
\address{Department of Mathematical Sciences, Loughborough University, Loughborough, Leicestershire, LE11 3TU, United Kingdom }
\email{A.M.Thompson@lboro.ac.uk}
\thanks{A. Thompson was partially supported by the Engineering and Physical Sciences Research Council programme grant \emph{Classification, Computation, and Construction: New Methods in Geometry}.}

\begin{abstract} Motivated by the relationship between numerical Grothendieck groups induced by the embedding of a smooth anticanonical elliptic curve into a del Pezzo surface, we define the notion of a quasi del Pezzo homomorphism between pseudolattices and establish its basic properties. The primary aim of the paper is then to prove a classification theorem for quasi del Pezzo homomorphisms, using a pseudolattice variant of the minimal model program. Finally, this result is applied to the classification of a certain class of genus one Lefschetz fibrations over discs.
\end{abstract}

\date{}
\subjclass[2010]{14F05 (primary), 14D05, 14J26, 18F30, 53D37, 57R17 (secondary)}
\maketitle

\section{Introduction}

The study of pseudolattices was initiated by Kuznetsov \cite{ecslc}, who formalized earlier ideas of Vial \cite{ecnsls}, Perling \cite{caesrs}, de Thanhoffer de Volcsey and Van den Bergh \cite{ameecl4}, and Bondal and Polishchuk \cite{sgtbfbg,hpaamh}. 
Pseudolattices generalize the classical concept of a lattice, by removing the requirement that the bilinear form be symmetric. The motivating example is that of the numerical Grothendieck group $\mathrm{K}_0^{\mathrm{num}}(\mathbf{D}(X))$ associated to the bounded derived category $\mathbf{D}(X)$ of coherent sheaves on a smooth complex projective variety $X$, equipped with the Euler pairing.

Taken generally, pseudolattices form a very broad class, but there are a number of special subclasses which exhibit interesting behaviour. The first, introduced by Kuznetsov \cite{ecslc} and motivated by the properties of $\mathrm{K}_0^{\mathrm{num}}(\mathbf{D}(X))$ when $X$ is a smooth projective complex surface, is the class of \emph{surface-like} pseudolattices; these are characterized by the existence of a special \emph{point-like} element, which behaves like the class of the structure sheaf of a point on $X$. Another class, introduced in Subsection \ref{sec:sphericalhom}, is the class of \emph{$(-1)^n$-Calabi-Yau} pseudolattices, in which the bilinear form is symmetric if $n$ is even and skew-symmetric if $n$ is odd; as suggested by the name, their definition is motivated by the properties of $\mathrm{K}_0^{\mathrm{num}}(\mathbf{D}(X))$ when $X$ is a smooth Calabi-Yau variety of dimension $n$.

Subsection \ref{sec:sphericalhom} also introduces a relative version of the $(-1)^n$-Calabi-Yau condition: this is the notion of a \emph{relative $(-1)^n$-Calabi-Yau homomorphism} between pseudolattices. The motivating example is the derived pull-back $i^*\colon \mathrm{K}_0^{\mathrm{num}}(\mathbf{D}(X)) \to \mathrm{K}_0^{\mathrm{num}}(\mathbf{D}(D))$, where $X$ is a smooth complex projective variety of dimension $n$ containing a smooth anticanonical divisor $D$, and $i\colon D \hookrightarrow X$ is the inclusion.

In this paper, we shall primarily be concerned with a special type of relative $(-1)^n$-Calabi-Yau homomorphism, which we call  \emph{quasi del Pezzo homomorphisms} (see Definition \ref{def:qdp}). The motivating example behind this definition is the derived pull-back $i^*\colon \mathrm{K}_0^{\mathrm{num}}(\mathbf{D}(X)) \to \mathrm{K}_0^{\mathrm{num}}(\mathbf{D}(C))$, where $X$ is a quasi del Pezzo surface (i.e. a rational surface where a general member of the anticanonical linear system is smooth), $C$ is a smooth anticanonical curve, and $i\colon C \hookrightarrow X$ is the inclusion. To a first approximation, one may think of quasi del Pezzo homomorphisms as relatively $(-1)^2$-Calabi-Yau homomorphisms where the source pseudolattice is surface-like, but the full definition is somewhat more technical.

The main result of this paper (Theorem \ref{thm:main}) is a complete classification of quasi del Pezzo homomorphisms. As one may anticipate, this result parallels the well-known classification of quasi del Pezzo surfaces: there are two ``minimal'' quasi del Pezzo homomorphisms, derived from smooth anticanonical divisors in $\bP^2$ and $\bP^1 \times \bP^1$ respectively, to which all other quasi del Pezzo homomorphisms may be reduced by a process of ``contraction''. The argument should therefore be thought of as a kind of minimal model program, refining Kuznetsov's \cite{ecslc} minimal model program for pseudolattices.

After proving this result, the remainder of the paper is dedicated to the application of these results to the classification of genus one Lefschetz fibrations. These are, roughly speaking, oriented, smooth, compact $4$-manifolds $Y$ fibred over (closed) discs, so that a general fibre is a $2$-torus and all singular fibres are obtained from a $2$-torus by contracting a simple closed curve. After choosing a distinguished fibre $C$ over a base point on the boundary of the disc, monodromy acts on $C$ as an element of the mapping class group of a $2$-torus, which is $\mathrm{SL}(2,\bZ)$. The problem is then to determine to what extent the data of the monodromy action and the number of singular fibres determines the Lefschetz fibration.

An application of classical ideas reduces this problem to one about the structure of the relative cohomology group $\mathrm{H}_2(Y,C;\bZ)$. More specifically, after equipping $\mathrm{H}_2(Y,C;\bZ)$ with a bilinear form, which we call the \emph{Seifert pairing} after an analogous notion in singularity theory, it acquires the structure of a pseudolattice, and the classification of genus one Lefschetz fibrations is reduced to a question about the classification of pseudolattices.

In this paper we concern ourselves with the special case where the genus one Lefschetz fibration has $n$ singular fibres and the monodromy action is conjugate to $\begin{psmallmatrix}1 & n-12 \\ 0 & 1\end{psmallmatrix}$; we call such fibrations \emph{quasi Landau-Ginzburg models} after a related notion due to Auroux, Katzarkov, and Orlov \cite{msdpsvccs}. In this case we show (Theorem \ref{thm:Lefschetzisqdp}) that the natural homomorphism $\mathrm{H}_2(Y,C;\bZ) \to \mathrm{H}_1(C,\bZ)$ is in fact a quasi del Pezzo homomorphism of pseudolattices, so we may deduce the classification of quasi Landau-Ginzburg models from our main theorem. In particular, we find that there are two ``fundamental'' quasi Landau-Ginzburg models, with $3$ and $4$ singular fibres respectively, to which all others may be reduced by a process of ``contraction''. In this setting, ``contraction'' intuitively corresponds to the process of identifying a special singular fibre, which may be removed from the quasi Landau-Ginzburg model to give a new, simpler quasi Landau-Ginzburg model that has the same properties as the original.

The problem of classifying genus one Lefschetz fibrations has attracted significant attention in the literature in recent years; we mention here only those papers most closely related to our results. In the case $n \geq 12$, our classification is essentially equivalent to a result of Cadavid and V\'{e}lez on matrix factorizations \cite[Theorem 20]{nfscsfef}; see also Baykur and Kamada \cite[Theorem 7]{cblfsfg} for a similar result stated in terms of Lefschetz fibrations. 

In the remaining cases $n \leq 12$, a number of examples of quasi Landau-Ginzburg models may be constructed by considering open sets in rational elliptic surfaces containing a singular fibre of Kodaira type $\mathrm{I}_{12-n}$; such surfaces were classified by Persson in \cite{ckfres}. These examples have formed the basis for a large volume of physics literature on \emph{string junctions} (see, for example, \cite{sjalar,mwlsj}), and were used by Auroux \cite{fsseisf} to demonstrate that there are two inequivalent genus one Lefschetz fibrations in the case $n = 4$. The question of whether these examples comprise \emph{all} quasi Landau-Ginzburg models with $n \leq 12$ formed the original motivation for writing this paper; it follows from our results that they do.

We also note that there is also a somewhat similar classification result in the case where the genus one Lefschetz fibration has $n$ singular fibres and the monodromy action has the related form $\begin{psmallmatrix}-1 & n-6 \\ 0 & -1\end{psmallmatrix}$; this was proved by Baykur and Kamada \cite[Theorem 7]{cblfsfg} for $n \geq 6$ and Golla and Lisca \cite[Theorem 3.5]{sfctb} for $n \geq 2$. One may use this to deduce a much weaker form of our result by adding two additional singular fibres in a controlled way. 

Finally, we remark that the fact that the classification of quasi Landau-Ginzburg models parallels the classification of quasi del Pezzo surfaces may seem surprising, but in the context of mirror symmetry this result is very natural. Indeed, if we postulate a mirror relationship between quasi del Pezzo surfaces and quasi Landau-Ginzburg models, so that the quasi del Pezzo surface $X$ is mirror to a quasi Landau-Ginzburg model $Y$, then homological mirror symmetry predicts that the derived category of coherent sheaves $\mathbf{D}(X)$ on $X$ should be derived equivalent to the derived Fukaya-Seidel category $\mathbf{F}(Y)$ of $Y$.  Morover, if $Y$ has chosen fibre $C$, it follows from the discussion in \cite[Section 6.1]{fscslf} that  $\mathrm{K}_0^{\mathrm{num}}(\mathbf{F}(Y))$, with its Euler pairing, is isomorphic as a pseudolattice to $\mathrm{H}_2(Y,C;\bZ)$ with the Seifert pairing. So we should expect $\mathrm{K}_0^{\mathrm{num}}(\mathbf{D}(X))$ to be isomorphic to $\mathrm{K}_0^{\mathrm{num}}(\mathbf{F}(Y))$, and the same classification result should hold in both cases. Further discussion of the relationship between our work and mirror symmetry may be found in Section \ref{sec:mirrors}.
\medskip

The structure of this paper is as follows. In Section \ref{sec:pseudolattices}, we introduce the basic pseudolattice machinery that will be used throughout the paper, including the definitions and fundamental properties of $(-1)^n$-Calabi-Yau pseudolattices and relative $(-1)^n$-Calabi-Yau homomorphisms. Much of the elementary material in this section is originally due to Kuznetsov \cite{ecslc}, whilst the material on pseudolattice homomorphisms is inspired by the work of Anno and Logvinenko on spherical functors \cite{sdgf}.

The main section of the paper is Section \ref{sec:qdp}, where we introduce the central notion of a quasi del Pezzo homomorphism (Definition \ref{def:qdp}) and prove some of its basic properties. This section culminates in the proof of the classification theorem for quasi del Pezzo homomorphisms (Theorem \ref{thm:main}). To prove this theorem, we run a carefully controlled, relative variant of Kuznetsov's \cite{ecslc} minimal model program for pseudolattices, that allows us to guarantee that our homomorphism remains quasi del Pezzo after each contraction step.

In Section \ref{sec:fibrations}, we apply these ideas to the classification of genus one Lefschetz fibrations over discs. After recalling relevant background material we introduce the notion of a quasi Landau-Ginzburg model (Definition \ref{def:lg}). Most of the remainder of the section is then dedicated to showing that the classification of quasi Landau-Ginzburg models may be reduced to that of quasi del Pezzo homomorphisms (Theorem \ref{thm:Lefschetzisqdp}), from which we may deduce the classification of quasi Landau-Ginzburg models (Theorem \ref{thm:lgclass} and Corollary \ref{cor:lgclass}).

Finally, Section \ref{sec:mirrors} contains a brief discussion of the relationship between this work and mirror symmetry. The main result here is Theorem \ref{thm:mirror}, which should be thought of as the shadow of a homological mirror symmetry statement on the numerical Grothendieck groups.
\medskip

\noindent \textbf{Acknowledgments.} The idea for this paper arose following discussions between Charles Doran and the authors during a visit to the Harvard Center of Mathematical Sciences and Applications (CMSA) in April 2018; the authors would like to thank the CMSA for their kind hospitality. The authors would also like to thank Ivan Smith, for some insightful comments on Lefschetz fibrations.

\section{Pseudolattices and Spherical Homomorphisms}\label{sec:pseudolattices}

We begin by discussing the theory of pseudolattices; this theory will form the backbone upon which the rest of the paper is supported.  This theory is largely based on the work of Kuznetsov \cite{ecslc}, who formalized earlier ideas of Vial \cite{ecnsls}, Perling \cite{caesrs}, de Thanhoffer de Volcsey and Van den Bergh \cite{ameecl4}, and Bondal and Polishchuk \cite{sgtbfbg,hpaamh}; Subsections \ref{sec:pseudolatticegeneralities} and \ref{sec:surfacelike} are essentially a recap of Kuznetsov's ideas. 

\subsection{Generalities on pseudolattices} \label{sec:pseudolatticegeneralities}

We begin with the central definition.

\begin{definition} \cite[Definition 2.1]{ecslc}
A \emph{pseudolattice} $(\mathrm{G},\langle \cdot, \cdot \rangle_\mathrm{G})$ is a finitely generated free abelian group $\mathrm{G}$ equipped with a (not necessarily symmetric) nondegenerate bilinear form
\[
\langle \cdot, \cdot \rangle_\mathrm{G} \colon \mathrm{G} \times \mathrm{G} \rightarrow \mathbb{Z}.
\]
A pseudolattice is called \emph{unimodular} if the form $\langle \cdot, \cdot \rangle_\mathrm{G}$ induces an isomorphism between $\mathrm{G}$ and $\mathrm{Hom}_\mathbb{Z}(\mathrm{G},\mathbb{Z})$. To simplify notation, we will usually refer to $(\mathrm{G},\langle \cdot, \cdot \rangle_\mathrm{G})$ by its underlying group $\mathrm{G}$.
\end{definition}

Many pseudolattices $\mathrm{G}$ come equipped an automorphism $S_\mathrm{G}$ of $\mathrm{G}$, called the \emph{Serre operator}. 

\begin{definition} \cite[Section 2.1]{ecslc}
Let $\mathrm{G}$ be a pseudolattice. A \emph{Serre operator} $S_\mathrm{G}$ is an automorphism of $\mathrm{G}$ so that 
\[
\langle u_1 , u_2 \rangle_\mathrm{G} = \langle u_2, S_\mathrm{G}(u_1) \rangle_\mathrm{G}
\]
for all $u_1,u_2 \in \mathrm{G}$. If a Serre operator exists, then it is unique.
\end{definition}

If we let $\chi$ denote the Gram matrix of $\langle \cdot , \cdot \rangle_\mathrm{G}$ and treat $u$ as a column vector, then it is a simple exercise in linear algebra to show that the Serre operator $S_\mathrm{G}$ is given by the matrix product
\[
S_\mathrm{G}(u) = \chi^{-1} \chi^T u.
\]
If $\mathrm{G}$ is unimodular then $\chi^{-1}$ is an integral matrix, so every unimodular pseudolattice has a Serre operator $S_\mathrm{G}$.

\begin{definition} \cite[Definition 2.2]{ecslc}
An element $e \in\mathrm{G}$ is called \emph{exceptional} if $\langle e,e\rangle_{\mathrm{G}} = 1$. A sequence of elements $(e_1,\ldots,e_n)$ is called an \emph{exceptional sequence} if $\langle e_i,e_j \rangle_{\mathrm{G}} = 0$ for all $i > j$. An \emph{exceptional basis} for $\mathrm{G}$ is an exceptional sequence whose elements form a basis for $\mathrm{G}$.
\end{definition}

By \cite[Lemma 2.3]{ecslc}, if $\mathrm{G}$ has an exceptional sequence of length $\rank(\mathrm{G})$, then this sequence forms an exceptional basis and $\mathrm{G}$ is unimodular. The motivating example behind these definitions is as follows. 

\begin{example} \cite[Section 2]{ecslc} \label{ex:Grothendieck} Given a saturated, $k$-linear triangulated category $\calC$ (where $k$ denotes any field), one may define the \emph{Grothendieck group} $\mathrm{K}_0(\calC)$ of $\calC$, which comes equipped with the \emph{Euler bilinear form} $\langle \cdot, \cdot\rangle \colon \mathrm{K}_0(\calC) \times \mathrm{K}_0(\calC) \to \bZ$, defined by
\[\langle\calF_1,\calF_2\rangle  := \sum_i(-1)^i\dim \Hom(\calF_1,\calF_2[i]).\]
The \emph{numerical Grothendieck group} $\mathrm{K}_0^{\mathrm{num}}(\calC)$ is then defined to be the quotient of $\mathrm{K}_0(\calC)$ by the kernel of the Euler form. 

Kuznetsov shows that, when equipped with the Euler form, $\mathrm{K}_0^{\mathrm{num}}(\calC)$ forms a pseudolattice. Moreover, the Serre functor on $\calC$ induces a Serre operator on $\mathrm{K}_0^{\mathrm{num}}(\calC)$, the classes of exceptional objects in $\calC$ are exceptional elements in  $\mathrm{K}_0^{\mathrm{num}}(\calC)$, any exceptional collection in $\calC$ gives an exceptional sequence in  $\mathrm{K}_0^{\mathrm{num}}(\calC)$, and any full exceptional collection in $\calC$ gives an exceptional basis of  $\mathrm{K}_0^{\mathrm{num}}(\calC)$ (although we note that the converses of these statements may not be true). \end{example}

Given an exceptional sequence, we may obtain another by the process of \emph{mutation}.

\begin{definition} Assume that $e \in \mathrm{G}$ is exceptional. The \emph{left} and \emph{right mutations} with respect to $e$ are endomorphisms of $\mathrm{G}$ defined, respectively, by
\[\bL_e(u) := u - \langle e,u \rangle_{\mathrm{G}}e \quad \text{and} \quad \bR_e(u) := u - \langle u,e \rangle_{\mathrm{G}}e.\]
\end{definition}

Given an exceptional sequence $e_{\bullet} = (e_1,\ldots,e_n)$, we may obtain new exceptional sequences by
\begin{align*}
\bL_{i,i+1}(e_{\bullet}) &:= (e_1,\ldots,e_{i-1},\bL_{e_i}(e_{i+1}),e_i,e_{i+2},\ldots,e_n),\\
\bR_{i,i+1}(e_{\bullet}) &:= (e_1,\ldots,e_{i-1},e_{i+1},\bR_{e_{i+1}}(e_i),e_{i+2},\ldots,e_n),
\end{align*}
and these two operations are mutually inverse.

\subsection{Surface-like pseudolattices}\label{sec:surfacelike}

There is an important subclass of pseudolattices, called the \emph{surface-like} pseudolattices. Many of the pseudolattices considered in this paper will turn out to be surface-like.

\begin{definition}\label{def:surfacelike} \cite[Definition 3.1]{ecslc}
A pseudolattice $\mathrm{G}$ is \emph{surface-like} if there is a primitive element $\mathbf{p} \in\mathrm{G}$ such that
\begin{enumerate}
\item $\langle \mathbf{p},\mathbf{p} \rangle_\mathrm{G} = 0$.
\item $\langle \mathbf{p} ,u \rangle_\mathrm{G} = \langle u , \mathbf{p}\rangle_\mathrm{G}$ for all $u \in \mathrm{G}$.
\item If $u_1,u_2 \in \mathrm{G}$ satisfy $\langle u_1,\mathbf{p} \rangle_\mathrm{G}= \langle u_2 ,\mathbf{p} \rangle_\mathrm{G} = 0$, then $\langle u_1,u_2 \rangle_\mathrm{G} = \langle u_2,u_1 \rangle_\mathrm{G}$.
\end{enumerate}
An element $\bp$ with the above properties is called a \emph{point-like} element in $\mathrm{G}$.
\end{definition}

Using the point-like element $\bp$, one may define a rank function on $\mathrm{G}$. Note that this definition, and the others appearing in the remainder of this subsection, depend upon the choice of point-like element $\bp$; we will see in Example \ref{ex:dx} that a surface-like pseudolattice may contain more than one point-like element.

\begin{definition}\cite[Section 3.2]{ecslc}
The \emph{rank} of $u \in \mathrm{G}$ is the integer
\[
\rank(u) := \langle  u, \mathbf{p} \rangle_\mathrm{G} = \langle \bp, u \rangle_{\mathrm{G}}.
\]
\end{definition}

By \cite[Lemma 3.10]{ecslc} the rank function fits into a complex
\[\bZ \stackrel{\bp}{\longrightarrow} G \xrightarrow{\rank} \bZ,\]
where the first map is always injective, and the middle cohomology ${\mathbf{p}^{\perp}} / \mathbf{p}$ of this complex is a finitely generated free abelian group. \cite[Lemma 3.11]{ecslc} shows that the bilinear form on $\mathrm{G}$ induces a bilinear form on ${\mathbf{p}^{\perp}} / \mathbf{p}$, making it into a lattice (in the traditional sense of a finitely-generated free abelian group equipped with an integral symmetric bilinear form). This lattice is called the \emph{N\'eron-Severi lattice} of $\mathrm{G}$; its formal definition is as follows.

\begin{definition}
Let $\mathrm{G}$ be a surface-like pseudolattice. Define the \emph{N\'eron-Severi group} of $\mathrm{G}$ to be the group
\[
\NS(\mathrm{G}) := {\mathbf{p}^{\perp}} / \mathbf{p};
\]
it is a finitely generated free abelian group. The N\'eron-Severi group is equipped with a nondegenerate integral symmetric bilinear form $q(\cdot, \cdot)$, defined as follows: if $u_1,u_2 \in {\mathbf{p}}^{\perp}$ and $[u_1], [u_2]$ are their classes in $\NS(\mathrm{G})$, then
\[
q\left([u_1], [u_2]\right) := -\langle u_1, u_2 \rangle_\mathrm{G};
\]
it is not difficult to show that this definition is independent of the choice of representatives for $[u_1]$, $[u_2]$. The pair $(\NS(\mathrm{G}), q(\cdot,\cdot))$ is called the \emph{N\'eron-Severi lattice} $\mathrm{G}$. As before, to simplify notation, we will usually refer to $(\NS(\mathrm{G}), q(\cdot,\cdot))$ by its underlying group $\NS(\mathrm{G})$. 
\end{definition}

Kuznetsov \cite[Lemma 3.11]{ecslc} shows that $\mathrm{G}$ is unimodular if and only if  $\NS(\mathrm{G})$ is unimodular and the rank function is surjective.  Using the rank function, one may define a map $\lambda\colon \bigwedge^2 \mathrm{G}\to \mathrm{NS}(\mathrm{G})$ by
\[
\lambda( u_1 \wedge u_2) := [\rank(u_1) u_2 - \rank(u_2) u_1].
\]
This map may be used to define a distinguished class $K_\mathrm{G}$  in $\NS(\mathrm{G}) \otimes \bQ$ , called the \emph{canonical class} of $\mathrm{G}$. Existence and uniqueness of the canonical class was proved by Kuznetsov in \cite[Lemma 3.12]{ecslc}.

\begin{definition}
The \emph{canonical class of $\mathrm{G}$} is the unique class $K_\mathrm{G} \in \mathrm{NS}(\mathrm{G}) \otimes \bQ$ such that for every $u_1,u_2 \in \mathrm{G}$,
\[
\langle u_1,u_2 \rangle_\mathrm{G} - \langle u_2,u_1 \rangle_\mathrm{G} = - q(K_\mathrm{G},\lambda(u_1 \wedge u_2) ).
\]
\end{definition}

By \cite[Lemma 3.12]{ecslc}, if $\mathrm{G}$ is unimodular, then $K_\mathrm{G}$ is integral (i.e. $K_{\mathrm{G}} \in \mathrm{NS}(\mathrm{G})$). Using $K_{\mathrm{G}}$, we may define an important invariant of a  surface-like pseudolattice.

\begin{definition}\cite[Definition 5.3]{ecslc} Let $\mathrm{G}$ be a surface-like pseudolattice with canonical class $K_{\mathrm{G}}$. The \emph{defect of $\mathrm{G}$} is defined to be
\[\delta(\mathrm{G}) := q(K_{\mathrm{G}},K_{\mathrm{G}}) + \rank(\NS(\mathrm{G})) - 10.\]
\end{definition}

Note that $\delta(\mathrm{G})$ is an integer if $\mathrm{G}$ is unimodular.  The motivating example behind these definitions is as follows. 

\begin{example} \label{ex:dx} \cite[Example 3.5]{ecslc} Suppose that $X$ is a smooth complex projective surface and  let $\mathbf{D}(X)$ denote the bounded derived category of coherent sheaves on $X$. Then $\mathrm{K}_0^{\mathrm{num}}(\mathbf{D}(X)))$ is a surface-like pseudolattice, with point-like element $\bp_X$ given by the class of the structure sheaf of a point. In this case, the N\'{e}ron-Severi lattice $\NS(\mathrm{K}_0^{\mathrm{num}}(\mathbf{D}(X)))$ is the usual N\'{e}ron-Severi group of $X$ with its intersection form, and the canonical class is the canonical class of $X$. 

Furthermore, Kuznetsov \cite[Lemma 5.5]{ecslc} has shown that if the geometric genus and irregularity of $X$ both vanish (this happens, for instance, when $X$ is a rational surface), then the defect $\delta(\mathrm{K}_0^{\mathrm{num}}(\mathbf{D}(X))) = 0$.

Finally, it follows from \cite[Example 3.5]{ecslc} that the point-like element $\bp_X$ given above is unique unless the canonical class of $X$ satisfies $K_X^2 = 0$. In this latter case, a nice example of a ``non-standard'' point-like element in $\mathrm{K}_0^{\mathrm{num}}(\mathbf{D}(X)))$ is described in \cite[Example 3.6]{ecslc}.
\end{example}

\subsection{Spherical homomorphisms} \label{sec:sphericalhom}

We next develop the theory of homomorphisms between pseudolattices. The ideas in the next few subsections are inspired by the work of Anno and Logvinenko \cite{sdgf} on spherical functors. Kapranov and Schechtman \cite{ps} noticed that such spherical functors may be thought of as categorical analogues of perverse sheaves on a disc, as studied by Galligo, Granger and Maisonobe \cite{dmfpsscn}. Under this analogy, spherical homomorphisms of pseudolattices (see Definition \ref{def:spherical}) correspond to the objects Kapranov and Schechtman \cite{psgs} call polarized perverse sheaves. We begin with a definition. 

\begin{definition}\label{def:CYpseudolattice}
We will say that a pseudolattice $\mathrm{G}$ is \emph{$(-1)^n$-Calabi-Yau} (often abbreviated to \emph{$(-1)^n$-CY}) if 
\[
\langle u_1, u_2 \rangle_\mathrm{G} = (-1)^n\langle u_2, u_1 \rangle_\mathrm{G}
\]
for all $u_1,u_2 \in \mathrm{G}$. In other words, $\mathrm{G}$ is a $(-1)^n$-Calabi-Yau pseudolattice if $\mathrm{G}$ has Serre operator $S_\mathrm{G} = (-1)^n\mathrm{id}_\mathrm{G}$. Note that this definition only depends upon the parity of $n$, so most of the time we will treat $n$ as an element of $\bZ/2\bZ$.
\end{definition}

The motivating example behind this definition is as follows.

\begin{example}\label{ex:CalabiYau} Let $\calC$ be a smooth and proper Calabi-Yau category of dimension $n$ (i.e. its Serre functor is the shift by $n$). Then the Euler pairing on $\mathrm{K}_0^{\mathrm{num}}(\calC)$ is symmetric if $n$ is even and skew-symmetric if $n$ is odd, so $\mathrm{K}_0^{\mathrm{num}}(\calC)$ is a $(-1)^n$-CY pseudolattice.
\end{example}

The next example is important enough that we state it as a definition.

\begin{definition} \label{def:E} The \emph{elliptic curve pseudolattice}, henceforth denoted by $\mathrm{E}$, is the unimodular $(-1)^1$-Calabi-Yau pseudolattice with underlying group $\mathbb{Z}^2$ generated by primitive elements $a,b$, and bilinear form defined by
\[
\langle a,b \rangle_\mathrm{E} = -1, \quad\quad \langle b,a \rangle_\mathrm{E} = 1, \quad\quad \langle a,a \rangle_\mathrm{E} = \langle b,b \rangle_\mathrm{E} = 0,
\]
extended to $\bZ^2$ by linearity.
\end{definition}

\begin{example} \label{ex:elliptic}
The pseudolattice $\mathrm{E}$ arises from an elliptic curve $C$ in two ways.
\begin{enumerate}
\item The numerical Grothendieck group $\mathrm{K}_0^{\mathrm{num}}(\mathbf{D}(C))$, with its Euler pairing, is isomorphic to $\mathrm{E}$. The element $a$ may be identified with the class of the structure sheaf $\calO_p$ of a point $p \in C$, and $b$ may be identified with the class of $\calO_C$.
\item The first integral homology $\mathrm{H}_1(C,\bZ)$, with its usual intersection form, is isomorphic to $\mathrm{E}$. This may also be interpreted as the numerical Grothendieck group associated to a triangulated category, in this case the \emph{derived Fukaya category} $\mathbf{F}(C)$ associated to $C$.
\end{enumerate}
The first definition should be thought of as ``algebraic'' and the second as ``symplectic''. Homological mirror symmetry for elliptic curves identifies the two sides; we will discuss this further in Section \ref{sec:mirrors}.
\end{example}

We now come to the main definition in this subsection.

\begin{definition} \label{def:spherical}
Let $\mathrm{G}$ and $\mathrm{H}$ be pseudolattices. A \emph{spherical homomorphism} from $\mathrm{G}$ to $\mathrm{H}$ is a homomorphism of abelian groups $f\colon\mathrm{G} \to \mathrm{H}$ with the following properties:
\begin{enumerate}
\item The homomorphism $f$ has a right adjoint $r\colon \mathrm{H} \to \mathrm{G}$, in the sense that 
\[
\langle f(u), v \rangle_{\mathrm{H}} = \langle u, r(v) \rangle_{\mathrm{G}}
\]
for any $u \in \mathrm{G}$ and $v \in \mathrm{H}$. 
\item The \emph{twist} and \emph{cotwist} endomorphisms, which are defined as
\[
{T}_f := \mathrm{id}_{\mathrm{H}} - fr, \quad \quad {C}_f := \mathrm{id}_{\mathrm{G}} - rf,
\]
respectively, are invertible. In fact, if ${T}_f$ is invertible with inverse $T_f^{-1}$, then ${C}_f$ is invertible with inverse $C_f^{-1} = \mathrm{id}_{\mathrm{G}} + rT_f^{-1}f$, and vice versa.
\end{enumerate}
\end{definition}

\begin{remark} Since the bilinear forms on $\mathrm{G}$ and $\mathrm{H}$ are non-degenerate, it is easy to show that if a right adjoint for $f$ exists, then it must be unique. Moreover, a simple exercise in linear algebra shows that if $\mathrm{G}$ is unimodular, then a right adjoint for $f$ always exists.
\end{remark}

Spherical homomorphisms allow us to define a relative notion of the $(-1)^n$-CY property.

\begin{definition}
We say that a spherical homomorphism $f \colon \mathrm{G} \rightarrow \mathrm{H}$ is \emph{relative $(-1)^n$-Calabi-Yau} (often abbreviated to \emph{relative $(-1)^n$-CY}) if $\mathrm{G}$ has a Serre operator $S_{\mathrm{G}}$ and ${C}_f = (-1)^nS_{\mathrm{G}}$. As in Definition \ref{def:CYpseudolattice}, this definition depends only upon the parity of $n$, so most of the time we will treat $n$ as an element of $\bZ/2\bZ$.
\end{definition}

The motivating example behind this definition is as follows.

\begin{example} \label{ex:relativeCY} Let $X$ be a smooth complex projective variety of dimension $n$ which contains a smooth anticanonical divisor $D$. Then we have a natural homomorphism $i^*\colon \mathrm{K}_0^{\mathrm{num}}(\mathbf{D}(X)) \to \mathrm{K}_0^{\mathrm{num}}(\mathbf{D}(D))$, given by the derived pull-back under the inclusion $i\colon D \hookrightarrow X$. This homomorphism has right adjoint $i_*$, given by derived push-forward. It is easy to check that $i^*$ is spherical.

The Serre functor on $\mathbf{D}(X)$ is given by $S_{\mathbf{D}(X)}=(- \otimes \omega_X)[n]$, where $\omega_X$ is the canonical sheaf of $X$. Since $\omega_X \cong \calO_X(-D)$, it is easy to compute that the Serre operator on $\mathrm{K}_0^{\mathrm{num}}(\mathbf{D}(X))$ may be written as 
\[S_{\mathrm{K}_0^{\mathrm{num}}(\mathbf{D}(X))}(\{\calF\}) = (-1)^n\{\calF(-D)\} = (-1)^n(\{\calF\} - \{\calF \otimes i_*(\calO_D)\}),\] 
where $\{\calF\}$ denotes the class of $\calF$ in $\mathrm{K}_0^{\mathrm{num}}(\mathbf{D}(X))$. Finally, the projection formula gives that $\calF \otimes i_*(\calO_D) = i_*i^*(\calF)$, so we obtain $S_{\mathrm{K}_0^{\mathrm{num}}(\mathbf{D}(X))} = (-1)^n(\mathrm{id} - i_*i^*) = (-1)^nC_{i^*}$. Thus $i^*$ is relative $(-1)^n$-CY.
\end{example}

Another example, which will be more important for us, is closely related to spherical objects in category theory.

\begin{example}\label{ex:sphob}
Let $\mathrm{H}$ be a $(-1)^n$-CY pseudolattice and let $v \in \mathrm{H}$ be a primitive vector so that 
\[\langle v, v\rangle_\mathrm{H} = \begin{cases}2 & \text{if } n \equiv 0 \bmod 2 \\   0 & \text{if } n \equiv 1 \bmod 2.\end{cases} \] 
Let $\mathrm{Z}(v)$ be the unimodular $(-1)^0$-CY pseudolattice generated by a single element $z$, with $\langle z, z \rangle_{\mathrm{Z}(v)}  =1$. Note that $(z)$ is an exceptional basis for $\mathrm{Z}(v)$.

There is a spherical homomorphism $\zeta\colon \mathrm{Z}(v) \rightarrow \mathrm{H}$ sending $z$ to $v$. The right adjoint of this homomorphism is
\begin{align*}
\rho\colon \mathrm{H} &\longrightarrow \mathrm{Z}(v) \\ w &\longmapsto \langle v, w \rangle_\mathrm{H} z
\end{align*}
This is easy to check: $\langle \zeta(z), w \rangle_\mathrm{H} = \langle v, w \rangle_\mathrm{H} = \langle z, \rho(w) \rangle_{\mathrm{Z}(v)}$. 

The cotwist of $\zeta$ acts on $m\cdot z \in \mathrm{Z}(v)$ by
\[
C_{\zeta}(m\cdot z) := (\mathrm{id}_{\mathrm{Z}} - \rho\zeta)(m\cdot z) \mapsto \left(m - m \langle v, v \rangle_{\mathrm{H}}\right)\cdot z = (-1)^{n+1}m\cdot z,
\]
so $C_{\zeta} = (-1)^{n+1}\mathrm{id}_{\mathrm{Z}(v)}$. Since $\mathrm{Z}(v)$ is $(-1)^0$-CY, it follows that ${C}_{\zeta} = (-1)^{n+1} S_{\mathrm{Z}(v)}$, hence $\zeta$ is relative $(-1)^{n+1}$-CY.

The twist of $\zeta$ acts on $w \in \mathrm{H}$ by 
\[
T_{\zeta}(w) := (\mathrm{id}_\mathrm{H}  - \zeta\rho)(w) = w - \langle v, w \rangle_\mathrm{H} v,
\]
which is the reflection associated to $v$. If $\mathrm{E}$ is the elliptic curve pseudolattice, identified with $\mathrm{H}_1(C,\bZ)$ for an elliptic curve $C$, and $v \in \mathrm{H}_1(C,\bZ)$ is the class of a simple oriented cycle, then $T_{\zeta}$ is the action on $\mathrm{H}_1(C,\bZ)$ induced by the Dehn twist associated to $v$.
\end{example}

We conclude this subsection with a useful lemma about relative $(-1)^n$-CY homomorphisms, that will be used repeatedly in the rest of the paper.

\begin{lemma}\label{lem:useful} Let $f\colon \mathrm{G} \to \mathrm{H}$ be a relative $(-1)^n$-CY spherical homomorphism. Then for any $u_1,u_2 \in \mathrm{G}$,
\[\langle f(u_1),f(u_2) \rangle_{\mathrm{H}} = \langle u_1,u_2 \rangle_{\mathrm{G}} + (-1)^{n+1} \langle u_2,u_1 \rangle_{\mathrm{G}}.\]
\end{lemma}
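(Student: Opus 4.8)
The plan is to unwind $\langle f(u_1), f(u_2)\rangle_{\mathrm{H}}$ using the adjunction, rewrite the result in terms of the cotwist, and then invoke the relative Calabi--Yau hypothesis together with the defining property of the Serre operator. All of the ingredients are purely formal, so this should amount to a short chain of substitutions rather than any genuinely hard step.

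First I would apply the adjunction property of the spherical homomorphism $f$, with right adjoint $r$, to move the second $f$ across the pairing:
\[
\langle f(u_1), f(u_2)\rangle_{\mathrm{H}} = \langle u_1, r(f(u_2))\rangle_{\mathrm{G}} = \langle u_1, rf(u_2)\rangle_{\mathrm{G}}.
\]
Next, from the definition of the cotwist $C_f = \mathrm{id}_{\mathrm{G}} - rf$ I would substitute $rf = \mathrm{id}_{\mathrm{G}} - C_f$, giving
\[
\langle f(u_1), f(u_2)\rangle_{\mathrm{H}} = \langle u_1, u_2\rangle_{\mathrm{G}} - \langle u_1, C_f(u_2)\rangle_{\mathrm{G}}.
\]

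The key step is then to exploit the relative $(-1)^n$-CY hypothesis, which gives $C_f = (-1)^n S_{\mathrm{G}}$, so that the correction term becomes $(-1)^n \langle u_1, S_{\mathrm{G}}(u_2)\rangle_{\mathrm{G}}$. The only point requiring care is the direction in which the defining identity $\langle u_1, u_2\rangle_{\mathrm{G}} = \langle u_2, S_{\mathrm{G}}(u_1)\rangle_{\mathrm{G}}$ of the Serre operator is applied: reading it with the roles of $u_1$ and $u_2$ interchanged yields $\langle u_1, S_{\mathrm{G}}(u_2)\rangle_{\mathrm{G}} = \langle u_2, u_1\rangle_{\mathrm{G}}$. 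Substituting this and using $-(-1)^n = (-1)^{n+1}$ produces exactly
\[
\langle f(u_1), f(u_2)\rangle_{\mathrm{H}} = \langle u_1, u_2\rangle_{\mathrm{G}} + (-1)^{n+1}\langle u_2, u_1\rangle_{\mathrm{G}},
\]
as desired.

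There is no real obstacle here beyond bookkeeping; the argument never uses invertibility of the twist or cotwist, only the adjunction, the formula $rf = \mathrm{id}_{\mathrm{G}} - C_f$, and the two defining relations for $C_f$ and $S_{\mathrm{G}}$. The single place one could slip is the orientation of the Serre-operator identity, and I would double-check it against the symmetric case, where $S_{\mathrm{G}} = \mathrm{id}_{\mathrm{G}}$ forces the expected consistency.
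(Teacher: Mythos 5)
Your proof is correct and follows exactly the same chain of substitutions as the paper's own proof: adjunction, the cotwist identity $rf = \mathrm{id}_{\mathrm{G}} - C_f$, the relative $(-1)^n$-CY hypothesis, and the defining property of the Serre operator (applied in the correct direction). Nothing to add.
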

\begin{proof} This is a simple computation. Let $r$ denote the right adjoint of $f$; then we have
\begin{align*}
\langle f(u_1), f(u_2) \rangle_\mathrm{H} &= \langle u_1, rf(u_2) \rangle_\mathrm{G} \\
&= \langle u_1, u_2\rangle_\mathrm{G} -  \langle u_1, C_f(u_2) \rangle_\mathrm{G} \\
&= \langle u_1 ,u_2 \rangle_\mathrm{G} - \langle u_1, (-1)^n S_\mathrm{G}(u_2) \rangle_\mathrm{G} \\
&= \langle u_1, u_2 \rangle_\mathrm{G} + (-1)^{n+1} \langle u_2 , u_1 \rangle_\mathrm{G}.
\end{align*}
\end{proof}

\subsection{Gluing pseudolattices along spherical homomorphisms} \label{sec:gluing} We may form new pseudolattices by gluing old ones along spherical homomorphisms.

\begin{definition}\label{def:gluing}
Let $\mathrm{G}_1, \mathrm{G}_2$ and  $\mathrm{H}$ be pseudolattices. Assume that 
\[
f_1 \colon \mathrm{G}_1 \longrightarrow \mathrm{H}, \quad\quad f_2 \colon \mathrm{G}_2 \longrightarrow \mathrm{H}
\]
are homomorphisms of the underlying groups. Define $\mathrm{G}_1 \oright_{\mathrm{H}} \mathrm{G}_2$ to be the pseudolattice with underlying group $\mathrm{G}_1 \oplus \mathrm{G}_2$ and bilinear form $\langle \cdot , \cdot \rangle_{\mathrm{G}_1 \oright_{\mathrm{H}} \mathrm{G}_2}$ given as follows: for $u_i \in \mathrm{G}_i$ and $v_j \in \mathrm{G}_j$, define
\[
\langle u_i, v_j \rangle_{\mathrm{G}_1 \oright_{\mathrm{H}}\mathrm{G}_2}
=\begin{cases}
\langle u_i, v_j \rangle_{\mathrm{G}_i} & \text{if }  i = j\\
\langle f_i(u_i), f_j(v_j) \rangle_\mathrm{H} & \text{if } i=1,\ j = 2 \\
0 & \text{if }  i=2,\ j=1 
\end{cases}
\]
and extend to $\mathrm{G}_1 \oplus \mathrm{G}_2$ by linearity.  When there is no chance of confusion, we will use the notation $\mathrm{G}_1 \oright \mathrm{G}_2$ instead of $\mathrm{G}_1 \oright_{\mathrm{H}} \mathrm{G}_2$. 
\end{definition}

It is easy to see that $\mathrm{G}_1 \oright \mathrm{G}_2$ is unimodular if $\mathrm{G}_1$ and $\mathrm{G}_2$ are, and that if $e_{1\bullet}$ and $e_{2\bullet}$ are exceptional bases of $\mathrm{G}_1$ and $\mathrm{G}_2$, then $(e_{1\bullet},e_{2\bullet})$ is an exceptional basis for $\mathrm{G}_1 \oright \mathrm{G}_2$. Note also that this definition does not require the $f_i$ to be spherical homomorphisms; however, as the next proposition shows, if we do assume that the $f_i$ are spherical, then the natural homomorphism $\mathrm{G}_1 \oright \mathrm{G}_2 \rightarrow \mathrm{H}$ is also spherical.

\begin{proposition}\label{prop:glueCY}
Given three pseudolattices $\mathrm{G}_1,\mathrm{G}_2$ and $\mathrm{H}$ and spherical homomorphisms $f_i\colon \mathrm{G}_i \rightarrow \mathrm{H}$, there is a natural homomorphism $f_1 \oright f_2 \colon \mathrm{G}_1 \oright \mathrm{G}_2 \rightarrow \mathrm{H}$, sending $(u_1,u_2) \in \mathrm{G}_1 \oright \mathrm{G}_2$ to $f_1(u_1) + f_2(u_2)$, which has the following properties:
\begin{enumerate}
\item[(a)] $f_1 \oright f_2$ is spherical 
\item[(b)] ${T}_{f_1 \oright f_2} = {T}_{f_1} \cdot {T}_{f_2}$. 
\item[(c)] If the homomorphisms $f_1$ and $f_2$ are relative $(-1)^n$-CY and $\mathrm{H}$ is a $(-1)^{n+1}$-CY pseudolattice, for some $n \in \bZ$, then $f$ is $(-1)^n$-CY.
\end{enumerate}
\end{proposition}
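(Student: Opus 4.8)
The plan is to handle (a) and (b) simultaneously by computing the right adjoint of $f = f_1 \oright f_2$ explicitly, and then to deduce (c) by reducing the relative Calabi-Yau condition to a single bilinear identity that is checked termwise along the gluing.

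First I would determine the right adjoint $r$ of $f$. Write $r_i \colon \mathrm{H} \to \mathrm{G}_i$ for the right adjoint of $f_i$. Imposing $\langle f(u_1,u_2), v\rangle_{\mathrm{H}} = \langle (u_1,u_2), r(v)\rangle_{\mathrm{G}_1 \oright \mathrm{G}_2}$ and using the definition of the glued form, the vanishing of the $\mathrm{G}_2$-to-$\mathrm{G}_1$ pairing together with the cross pairing $\langle f_1(\cdot), f_2(\cdot)\rangle_{\mathrm{H}}$ forces
\[
r(v) = \bigl(r_1 T_{f_2}(v),\ r_2(v)\bigr),
\]
so that the first component acquires a twist factor $T_{f_2}$. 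Pinning down this formula, and in particular spotting the $T_{f_2}$, is the one genuinely delicate step; the rest is substitution. With $r$ in hand I would compute $T_f = \mathrm{id}_{\mathrm{H}} - fr$ using the identities $f_i r_i = \mathrm{id}_{\mathrm{H}} - T_{f_i}$; the terms telescope to give $T_f = T_{f_1} T_{f_2}$, which is (b). Since the $f_i$ are spherical, $T_{f_1}$ and $T_{f_2}$ are invertible, hence so is their composite $T_f$; by the equivalence recorded in Definition \ref{def:spherical}, invertibility of $T_f$ forces $C_f$ to be invertible, and together with the existence of $r$ this establishes (a).

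For (c) I would first record the general cotwist identity $\langle u_1, C_f(u_2)\rangle_{\mathrm{G}} = \langle u_1, u_2\rangle_{\mathrm{G}} - \langle f(u_1), f(u_2)\rangle_{\mathrm{H}}$, which holds for any spherical homomorphism. Feeding this into the defining relation of a Serre operator for the candidate $(-1)^n C_f$, and using that $\mathrm{H}$ is $(-1)^{n+1}$-CY, one finds that $(-1)^n C_f$ is a Serre operator for $\mathrm{G}_1 \oright \mathrm{G}_2$ if and only if
\[
\langle f(x), f(y)\rangle_{\mathrm{H}} = \langle x, y\rangle_{\mathrm{G}} - (-1)^n \langle y, x\rangle_{\mathrm{G}}
\]
for all $x, y \in \mathrm{G}_1 \oright \mathrm{G}_2$. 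As $C_f$ is an automorphism by (a), verifying this identity simultaneously produces a Serre operator on $\mathrm{G}_1 \oright \mathrm{G}_2$ and gives $C_f = (-1)^n S_{\mathrm{G}_1 \oright \mathrm{G}_2}$, which is the assertion that $f$ is relative $(-1)^n$-CY.

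It remains to check the displayed identity, which I would do by expanding both sides along the gluing with $x = (x_1,x_2)$ and $y = (y_1,y_2)$. The diagonal contributions collapse exactly to $\langle f_i(x_i), f_i(y_i)\rangle_{\mathrm{H}}$ by Lemma \ref{lem:useful} applied to each relative $(-1)^n$-CY homomorphism $f_i$, and the matching cross term $\langle f_1(x_1), f_2(y_2)\rangle_{\mathrm{H}}$ appears on both sides. The only surviving discrepancy is $\langle f_2(x_2), f_1(y_1)\rangle_{\mathrm{H}} + (-1)^n \langle f_1(y_1), f_2(x_2)\rangle_{\mathrm{H}}$, and this vanishes because $\mathrm{H}$ is $(-1)^{n+1}$-CY: the first term becomes $(-1)^{n+1}\langle f_1(y_1), f_2(x_2)\rangle_{\mathrm{H}}$, and $(-1)^{n+1} + (-1)^n = 0$. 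Thus the main obstacle is concentrated in the adjoint computation above; the remainder is sign-bookkeeping in which the $(-1)^{n+1}$-CY hypothesis on $\mathrm{H}$ does precisely the work needed to annihilate the off-diagonal terms.
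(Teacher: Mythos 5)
Your proposal is correct. For parts (a) and (b) it follows the paper's proof exactly: the same right adjoint $r(v) = (r_1 T_{f_2}(v),\, r_2(v))$, the same telescoping computation giving $T_f = T_{f_1} T_{f_2}$, and invertibility of the two factors yielding sphericity. For part (c), however, your route is organized genuinely differently, and more cleanly. The paper computes the cotwist $C_f(u_1,u_2)$ explicitly in components, expands $\langle (w_1,w_2), C_f(u_1,u_2)\rangle_{\mathrm{G}_1 \oright \mathrm{G}_2}$, cancels terms by adjunction, and only then feeds in the two Calabi--Yau hypotheses and uniqueness of the Serre operator. You never compute $C_f$ at all: from the identity $\langle u_1, C_f(u_2)\rangle_{\mathrm{G}} = \langle u_1, u_2\rangle_{\mathrm{G}} - \langle f(u_1), f(u_2)\rangle_{\mathrm{H}}$ (valid for any homomorphism admitting a right adjoint) you extract that, for $f$ spherical, the relative $(-1)^n$-CY condition is \emph{equivalent} to the pairing identity $\langle f(x), f(y)\rangle_{\mathrm{H}} = \langle x, y\rangle_{\mathrm{G}} - (-1)^n \langle y, x\rangle_{\mathrm{G}}$ --- in effect a converse to Lemma \ref{lem:useful} --- and then verify that identity blockwise, with Lemma \ref{lem:useful} applied to each $f_i$ handling the diagonal terms, the matching cross term appearing on both sides, and the $(-1)^{n+1}$-CY property of $\mathrm{H}$ annihilating the remaining off-diagonal discrepancy via $(-1)^{n+1} + (-1)^n = 0$. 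What this buys is a shorter computation in which the role of each hypothesis is isolated (each $f_i$ relative CY for the diagonal blocks, $\mathrm{H}$ CY for the cross terms); what the paper's heavier computation buys is the explicit component formula for $C_f$, which is reused later (e.g.\ in the proof of Theorem \ref{thm:contraction}). One point you rightly flag and which should remain explicit in a full write-up: a Serre operator is by definition an automorphism, so identifying $(-1)^n C_f$ as $S_{\mathrm{G}_1 \oright \mathrm{G}_2}$ requires the invertibility of $C_f$ supplied by part (a).
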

\begin{proof}
For simplicity of notation, throughout this proof we let $f := f_1 \oright f_2$. We begin by proving (a) and (b). We claim first that the right adjoint to $f$ is the map $r\colon \mathrm{H} \rightarrow \mathrm{G}_1 \oright \mathrm{G}_2$ defined by
\[r(v) := \left(r_1T_{f_2}(v), r_2(v)\right) = \left(r_1(v) - r_1f_2r_2(v), r_2(v)\right),\]
where $r_1$ and $r_2$ are right adjoints to $f_1$ and $f_2$, respectively. To see this note that, by linearity,
\[
\langle f(u_1,u_2),v \rangle_\mathrm{H} = \langle f_1(u_1), v \rangle_\mathrm{H} + \langle f_2(u_2) , v \rangle_\mathrm{H}.
\]
On the other hand, we have that
\begin{align*}
\langle (u_1,u_2), r(v) \rangle_{\mathrm{G}_1 \oright \mathrm{G}_2} &= \langle u_1, r_1(v) \rangle_{\mathrm{G}_1} - \langle u_1, r_1f_2r_2(v) \rangle_{\mathrm{G}_1} + \\ &\quad + \langle f_1(u_1), f_2 r_2(v) \rangle_\mathrm{H} + \langle u_2, r_2(v) \rangle_{\mathrm{G}_2}.
\end{align*}
Using that fact that $r_1$ is a right adjoint to $f_1$, the second term on the right hand side is equal to $\langle f_1(u_1), f_2r_2(v) \rangle_{\mathrm{H}}$, so the second and third terms cancel and we are left with
\[\langle (u_1,u_2), r(v) \rangle_{\mathrm{G}_1 \oright \mathrm{G}_2} = \langle u_1, r_1(v) \rangle_{\mathrm{G}_1} + \langle u_2 , r_2(v) \rangle_{\mathrm{G}_2} = \langle f_1(u_1), v \rangle_\mathrm{H} + \langle f_2(u_2), v \rangle_\mathrm{H}.
\]
Therefore $r$ is right adjoint to $f$. 

Next we check that
\begin{align*}
T_f &= (\mathrm{id}_\mathrm{H} - fr)(v)\\ 
&= v - f_1(r_1(v) - r_1f_2r_2(v)) - f_2r_2(v)\\
&= (\mathrm{id}_\mathrm{H} - f_1 r_1)(\mathrm{id}_\mathrm{H} - f_2 r_2)(v) \\
&= {T}_{f_1} \cdot {T}_{_{f_2}}.
\end{align*}
This proves (b) and, since both ${T}_{f_1}$ and ${T}_{f_2}$ are invertible, so is ${T}_f$. Thus $f$ is a spherical homomorphism, proving (a).

Now let us prove (c). Assume that ${C}_{f_i} = (-1)^n S_{\mathrm{G}_i}$ for each $i \in \{1,2\}$ and $S_\mathrm{H} = (-1)^{n+1}\mathrm{id}_\mathrm{H}$. We have
\begin{align*}
{C}_f(u_1,u_2) &= (\mathrm{id}_{\mathrm{G}_1 \oright \mathrm{G}_2} - rf)(u_1,u_2) \\
&= (u_1 - r_1(f_1(u_1) + f_2(u_2)) + r_1f_2r_2(f_1(u_1) + f_2(u_2)), \\
&\quad\quad u_2 - r_2(f_1(u_1) + f_2(u_2)))  \\ 
&=({C}_{f_1}(u_1) - r_1f_2{C}_{f_2}(u_2) + r_1f_2r_2f_1(u_1),\, {C}_{f_2}(u_2) - r_2f_1(u_1)).
\end{align*} 
Using this we compute
\begin{align*}
\langle (w_1,w_2), {C}_f(u_1,u_2) \rangle_{\mathrm{G}_1 \oright \mathrm{G}_2} &= \langle w_1, {C}_{f_1}(u_1) \rangle_{\mathrm{G}_1} - \langle w_1, r_1f_2{C}_{f_2}(u_2) \rangle_{\mathrm{G}_1}+ \\ 
&\quad + \langle w_1, r_1f_2r_2f_1(u_1) \rangle_{\mathrm{G}_1}  + \langle f_1(w_1), f_2{C}_{f_2}(u_2) \rangle_\mathrm{H} - \\
&\quad - \langle f_1(w_1), f_2r_2f_1(u_1) \rangle_\mathrm{H}  + \langle w_2, {C}_{f_2}(u_2) \rangle_{\mathrm{G}_2}  - \\ 
&\quad - \langle w_2, r_2f_1(u_1) \rangle_{\mathrm{G}_2}.
\end{align*}
Using the right adjoint property of $r_1$ and $r_2$ on the second, third, and final terms on the right-hand side of this expression, we may reduce it to
\[
\langle w_1,{C}_{f_1}(u_1) \rangle_{\mathrm{G}_1} + \langle w_2, {C}_{f_2}(u_2) \rangle_{\mathrm{G}_2} - \langle f_2(w_2), f_1(u_1) \rangle_{\mathrm{H}}.
\]
Now, using the fact that ${C}_{f_i} = (-1)^{n}S_{\mathrm{G}_i}$ and  $ S_\mathrm{H} = (-1)^{n+1}\mathrm{id}_\mathrm{H}$, we obtain
\begin{align*}
\langle (w_1, w_2) ,\mathrm{C}_{f}(u_1, u_2) \rangle_{\mathrm{G}_1 \oright \mathrm{G}_2 } &= (-1)^n(\langle u_1, w_1 \rangle_{\mathrm{G}_1} + \langle u_2, w_2 \rangle_{\mathrm{G}_2} + \langle f_1(u_1), f_2(w_2) \rangle_{\mathrm{H}}) \\
&= (-1)^n\langle (u_1,u_2) ,(w_1,w_2) \rangle_{\mathrm{G}_1 \oright \mathrm{G}_2}
\end{align*}
for any $(u_1,u_2)$ and $(w_1,w_2)$ in $\mathrm{G}_1 \oright \mathrm{G}_2$. It follows by uniqueness of the operator $S_{\mathrm{G}_1 \oright \mathrm{G}_2}$ that ${C}_f = (-1)^nS_{\mathrm{G}_1 \oright \mathrm{G}_2}$. This proves (c).
\end{proof}

For us, the following example provides one of the main applications of these ideas.

\begin{example}\label{ex:directed}
Suppose we have a pair of elements $v_1,v_2$ in a $(-1)^1$-CY pseudolattice $\mathrm{H}$ (for instance, $\mathrm{H}$ could be the elliptic curve pseudolattice $\mathrm{E}$). Then $\langle v_i,v_i \rangle_{\mathrm{H}} =  0$ for each $i$, as the bilinear form on $\mathrm{H}$ is antisymmetric. Define rank one $(-1)^0$-CY pseudolattices $\mathrm{Z}(v_1)$ and $\mathrm{Z}(v_2)$ generated by elements $z_1$ and $z_2$, respectively, with $\langle z_i,z_i \rangle_{\mathrm{Z}(v_i)} = 1$. By Example \ref{ex:sphob}, we have relative $(-1)^0$-CY spherical homomorphisms $\zeta_i \colon \mathrm{Z}(v_i) \to \mathrm{H}$ taking $z_i$ to $v_i$.

Let $\mathrm{Z}(v_1,v_2)$ denote the unimodular pseudolattice $\mathrm{Z}(v_1,v_2) := \mathrm{Z}(v_1) \oright_{\mathrm{H}} \mathrm{Z}(v_2)$. The underlying abelian group of $\mathrm{Z}(v_1,v_2)$ is freely generated by $z_1$ and $z_2$, and its bilinear form is given by the Gram matrix
\[
\begin{pmatrix} 1 & \langle v_1, v_2 \rangle_{\mathrm{H}} \\ 0 & 1  \end{pmatrix};
\]
$(z_1,z_2)$ thus form an exceptional basis for $Z(v_1,v_2)$.

By Proposition \ref{prop:glueCY}, we have that the homomorphism $\zeta\colon \mathrm{Z}(v_1,v_2) \to \mathrm{H}$ sending $z_i$ to $v_i$ is spherical and relative $(-1)^0$-CY (so the cotwist $C_{\zeta} = S_{\mathrm{Z}(v_1,v_2)}$). The twist $T_f$ is given by the product $T_{\zeta_1} \cdot T_{\zeta_2}$, where
\[T_{\zeta_i}(w) = w - \langle v_i, w \rangle_{\mathrm{G}} v_i\]
as in Example \ref{ex:sphob}.

More generally, given an ordered collection $(v_1,\ldots,v_{n})$ of objects in a $(-1)^{1}$-CY pseudolattice $\mathrm{H}$, we may apply this construction repeatedly to obtain a unimodular pseudolattice $\mathrm{Z}(v_1,\ldots,v_{n})$ whose underlying group has basis $(z_1,\dots, z_{n})$ and whose bilinear form is given by
\[
\langle z_i, z_j \rangle_{\mathrm{Z}(v_1,\ldots,v_{n})} = \begin{cases}1 & \text{ if } i = j \\
\langle v_i, v_j \rangle_\mathrm{H} & \text{ if } i < j \\
0 & \text{ if } i > j.
\end{cases}
\]
The homomorphism $\zeta\colon \mathrm{Z}(v_1,\ldots,v_{n}) \rightarrow \mathrm{G}$ sending $z_i$ to $v_i$ is spherical and $(-1)^{0}$-CY (so ${C}_{\zeta} = S_{\mathrm{Z}(v_1,\ldots,v_{n})}$), and ${T}_{\zeta} = {T}_{\zeta_{1}}  \cdots {T}_{\zeta_{n}}$. Moreover, $(z_1,\ldots,z_n)$ form an exceptional basis for $\mathrm{Z}(v_1,\ldots,v_{n})$.
\end{example}

\section{Quasi del Pezzo Pseudolattices}\label{sec:qdp}

Next we focus our attention on a special case of the theory above, where we have a spherical homomorphism whose target is the elliptic curve pseudolattice $\mathrm{E}$ from Definition \ref{def:E}. 

\subsection{Spherical homomorphisms to the elliptic curve} \label{sec:qdpprops}

We begin by establishing a number of useful properties of pseudolattices admitting spherical homomorphisms to the elliptic curve pseudolattice $\mathrm{E}$. 

\begin{proposition}\label{prop:issurfacelike}
Suppose $(a,b)$ is a basis for $\mathrm{E}$ as in Definition \ref{def:E}. Let $f\colon \mathrm{G} \to \mathrm{E}$ be a relative $(-1)^0$-CY spherical homomorphism with right adjoint $r$. Then $\mathrm{G}$ is surface-like with point-like vector $\bp = r(a)$ if and only if $r(a) \in \mathrm{G}$ is primitive and the twist $\mathrm{T}_f$ satisfies $T_f(a) = a$.
\end{proposition}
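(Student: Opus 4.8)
The plan is to translate the surface-like axioms for $\bp = r(a)$ into statements about the elliptic curve pseudolattice $\mathrm{E}$, using the adjunction and the Serre operator. Two identities will do most of the work. First, the adjoint property gives $\langle u, \bp\rangle_{\mathrm{G}} = \langle u, r(a)\rangle_{\mathrm{G}} = \langle f(u), a\rangle_{\mathrm{E}}$ for every $u \in \mathrm{G}$. Second, because $f$ is relative $(-1)^0$-CY we have $S_{\mathrm{G}} = C_f = \mathrm{id}_{\mathrm{G}} - rf$, so the defining identity of the Serre operator, $\langle \bp, u\rangle_{\mathrm{G}} = \langle u, S_{\mathrm{G}}(\bp)\rangle_{\mathrm{G}}$, converts the ``left'' pairing $\langle \bp, u\rangle_{\mathrm{G}}$ into something computable once $f(\bp)$ is known. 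I would also record at the outset that $T_f(a) = a - fr(a) = a - f(\bp)$, so the hypothesis $T_f(a) = a$ is equivalent to the cleaner condition $f(\bp) = 0$.

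For the backward direction, assume $\bp$ is primitive and $f(\bp) = 0$. Condition (1) of Definition \ref{def:surfacelike} is immediate, since $\langle \bp, \bp\rangle_{\mathrm{G}} = \langle f(\bp), a\rangle_{\mathrm{E}} = 0$. Condition (2) follows because $f(\bp) = 0$ gives $S_{\mathrm{G}}(\bp) = \bp$, whence $\langle \bp, u\rangle_{\mathrm{G}} = \langle u, S_{\mathrm{G}}(\bp)\rangle_{\mathrm{G}} = \langle u, \bp\rangle_{\mathrm{G}}$. Condition (3) is where Lemma \ref{lem:useful} enters: if $\langle u_1, \bp\rangle_{\mathrm{G}} = \langle u_2, \bp\rangle_{\mathrm{G}} = 0$ then $\langle f(u_i), a\rangle_{\mathrm{E}} = 0$, and the explicit form on $\mathrm{E}$ from Definition \ref{def:E} forces each $f(u_i)$ to be a multiple of $a$; hence $\langle f(u_1), f(u_2)\rangle_{\mathrm{E}} = 0$ and Lemma \ref{lem:useful} yields $\langle u_1, u_2\rangle_{\mathrm{G}} = \langle u_2, u_1\rangle_{\mathrm{G}}$. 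Together with primitivity of $\bp$, this shows $\mathrm{G}$ is surface-like with point-like vector $\bp$.

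For the forward direction, assume $\mathrm{G}$ is surface-like with point-like vector $\bp = r(a)$ (so $\bp$ is primitive) and aim to show $f(\bp) = 0$. Write $f(\bp) = \alpha a + \beta b$. Condition (1) forces $\beta = \langle f(\bp), a\rangle_{\mathrm{E}} = \langle \bp, \bp\rangle_{\mathrm{G}} = 0$, so $f(\bp) = \alpha a$ and therefore $S_{\mathrm{G}}(\bp) = \bp - r(\alpha a) = (1 - \alpha)\bp$. Feeding this into the Serre identity, condition (2) becomes $(1-\alpha)\langle u, \bp\rangle_{\mathrm{G}} = \langle u, \bp\rangle_{\mathrm{G}}$, i.e.\ $\alpha\langle u, \bp\rangle_{\mathrm{G}} = 0$, for all $u \in \mathrm{G}$.

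I expect the main obstacle to be the final step of concluding $\alpha = 0$. A naive determinant argument applied to the eigenvalue relation $S_{\mathrm{G}}(\bp) = (1-\alpha)\bp$ only shows $1 - \alpha = \pm 1$, leaving the spurious possibility $\alpha = 2$, which is not good enough. The clean resolution is to invoke nondegeneracy of the form on $\mathrm{G}$: since $\bp$ is primitive, hence nonzero, there is some $u$ with $\langle u, \bp\rangle_{\mathrm{G}} \neq 0$, and then $\alpha\langle u, \bp\rangle_{\mathrm{G}} = 0$ forces $\alpha = 0$. Thus $f(\bp) = 0$, equivalently $T_f(a) = a$, which completes the proof.
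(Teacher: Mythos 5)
Your proof is correct and follows essentially the same route as the paper's: both directions use the adjunction identity $\langle u, r(a)\rangle_{\mathrm{G}} = \langle f(u), a\rangle_{\mathrm{E}}$, the relative $(-1)^0$-CY identity $S_{\mathrm{G}} = \mathrm{id}_{\mathrm{G}} - rf$, Lemma \ref{lem:useful} for condition (3), and, crucially, nondegeneracy of the form together with primitivity of $r(a)$ to rule out the spurious scalar in the forward direction (your $\alpha$, the paper's $c$). Writing $f(\bp) = \alpha a + \beta b$ instead of invoking $\prescript{\perp}{}a = \Span(a)$ is a purely cosmetic difference.
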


\begin{proof} We begin by showing that  if $r(a) \in \mathrm{G}$ is primitive and the twist $\mathrm{T}_f$ satisfies $T_f(a) = a$, then $\mathrm{G}$ is surface-like with point-like vector $\bp = r(a)$. To do this, we need to show that $r(a)$ satisfies conditions (1)--(3) from Definition \ref{def:surfacelike}. To check (1) note that, according to our hypotheses, we have $a = \mathrm{T}_f(a) = a - fr(a)$. Thus $fr(a) = 0$, so $\langle r(a) ,r(a) \rangle_\mathrm{G} = \langle  fr(a), a \rangle_\mathrm{E} = 0$.

To check (2), for any $u \in \mathrm{G}$ we know that $\langle r(a), u \rangle_\mathrm{G} = \langle u, S_\mathrm{G}r(a) \rangle_\mathrm{G}$. But, as $f$ is relative $(-1)^0$-CY, $S_\mathrm{G} = C_f = \mathrm{id}_\mathrm{G} - rf$, so
\[
\langle r(a), u \rangle_\mathrm{G} = \langle u, r(a) \rangle_\mathrm{G} - \langle u, rfr(a) \rangle_\mathrm{G}.
\]
Since $fr(a) = 0$, it follows that $\langle u, rfr(a) \rangle_\mathrm{G} =0$, and thus $\langle r(a), u \rangle_\mathrm{G } = \langle u ,r(a) \rangle_\mathrm{G}$.

Finally, we check condition (3). Assume that $u_1,u_2 \in \mathrm{G}$ satisfy $\langle u_i , r(a) \rangle_{\mathrm{G}} = 0$. This condition is equivalent to $\langle f(u_i), a \rangle_{\mathrm{E}} =0$, by adjunction. Therefore, there are some constants $c_1,c_2 \in \bZ$ so that $f(u_i) = c_i a$, as $\prescript{\perp}{}a = \Span(a)$ in $\mathrm{E}$. So, by Lemma \ref{lem:useful},
\[\langle u_1,u_2\rangle_\mathrm{G} - \langle u_2, u_1 \rangle_\mathrm{G} = \langle f(u_1), f(u_2) \rangle_\mathrm{E} = \langle c_{1} a, c_{2}a \rangle_\mathrm{E} = 0.\]
Therefore, $\langle u_1, u_2 \rangle_\mathrm{G} = \langle u_2, u_1 \rangle_\mathrm{G}$, as required.

For the converse statement, primitivity of $r(a)$ is assured by Definition \ref{def:surfacelike}. To prove the twist condition, note that it is equivalent to show that $fr(a) = 0$. Since $r(a)$ is point-like, we obtain that $\langle fr(a), a \rangle_{\mathrm{E}} = \langle r(a), r(a) \rangle_{\mathrm{G}} = 0$, so we must have $fr(a) = ca$ for some $c \in \bZ$, as $\prescript{\perp}{}a = \Span(a)$ in $\mathrm{E}$. 

It therefore suffices to show that $c=0$. Since $f$ is relative $(-1)^0$-CY, $S_\mathrm{G} = C_f = \mathrm{id}_\mathrm{G} - rf$, so for any $u \in \mathrm{G}$ we have
\[
\langle r(a), u \rangle_\mathrm{G} = \langle u, r(a) \rangle_\mathrm{G} - \langle u, rfr(a) \rangle_\mathrm{G} = \langle u, r(a) \rangle_\mathrm{G} - c\langle u, r(a) \rangle_\mathrm{G}.
\]
Moreover, as $r(a)$ is point-like we have $\langle r(a), u \rangle_\mathrm{G } = \langle u ,r(a) \rangle_\mathrm{G}$, so it follows that $c\langle u, r(a) \rangle_\mathrm{G} = 0$ for all $u \in \mathrm{G}$. But $r(a)$ is primitive and the bilinear form on $\mathrm{G}$ is non-degenerate, so this is only possible if $c = 0$. Thus $fr(a) = 0$ and hence $T_f(a) = a$, as required.\end{proof}

Note that we have two distinguished classes $a$ and $b$ in $\mathrm{E}$, but so far we have only made use of $a$. We claim that $[r(b)]$ plays the role of the anticanonical class in the N\'{e}ron-Severi group $\mathrm{NS}(\mathrm{G})$.

\begin{proposition}\label{prop:anticanonical}
Suppose $(a,b)$ is a basis for $\mathrm{E}$ as in Definition \ref{def:E}. Let $f\colon \mathrm{G} \to \mathrm{E}$ be a relative $(-1)^0$-CY spherical homomorphism with right adjoint $r$ and suppose that  $\mathrm{G}$ is surface-like with point-like vector $\bp = r(a)$. Then we have $r(b) \in \mathbf{p}^\perp$ and $[r(b)] = -K_\mathrm{G}$.
\end{proposition}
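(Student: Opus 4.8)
The plan is to treat the two assertions in turn, starting from the consequence of surface-likeness supplied by Proposition \ref{prop:issurfacelike}: since $\bp = r(a)$ is point-like, that proposition gives $T_f(a) = a$, equivalently $fr(a) = 0$. To establish $r(b) \in \bp^\perp$ I would simply compute the rank of $r(b)$ using the point-like symmetry of $\bp$ and the adjunction $\langle f(u),v\rangle_\mathrm{E} = \langle u, r(v)\rangle_\mathrm{G}$:
\[
\rank(r(b)) = \langle \bp, r(b)\rangle_\mathrm{G} = \langle r(a), r(b)\rangle_\mathrm{G} = \langle fr(a), b\rangle_\mathrm{E} = 0,
\]
so $r(b) \in \bp^\perp$.

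For the identification $[r(b)] = -K_\mathrm{G}$, I would appeal to the uniqueness of the canonical class (\cite[Lemma 3.12]{ecslc}): it is enough to check that $-[r(b)]$ satisfies the identity defining $K_\mathrm{G}$, i.e. that
\[
\langle u_1, u_2\rangle_\mathrm{G} - \langle u_2, u_1\rangle_\mathrm{G} = q\bigl([r(b)], \lambda(u_1\wedge u_2)\bigr)
\]
for all $u_1,u_2 \in \mathrm{G}$. The left-hand side is exactly $\langle f(u_1), f(u_2)\rangle_\mathrm{E}$ by Lemma \ref{lem:useful} applied with $n=0$. On the right, $\lambda(u_1\wedge u_2) = [\rank(u_1)u_2 - \rank(u_2)u_1]$ has rank zero and $r(b)\in\bp^\perp$ by the first part, so $q$ is defined and expands (via $q([x],[y]) = -\langle x,y\rangle_\mathrm{G}$) into a combination of the pairings $\langle r(b), u_i\rangle_\mathrm{G}$.

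The crux is evaluating $\langle r(b), u\rangle_\mathrm{G}$, where $r(b)$ occupies the left slot and is therefore not directly accessible to adjunction. The device is to route through the Serre operator: as $f$ is relative $(-1)^0$-CY, $S_\mathrm{G} = C_f = \mathrm{id}_\mathrm{G} - rf$, so $\langle r(b), u\rangle_\mathrm{G} = \langle u, S_\mathrm{G}(r(b))\rangle_\mathrm{G} = \langle u, r(b) - rfr(b)\rangle_\mathrm{G}$. I would first note that $fr(b)$ lies in the span of $a$: extracting the $b$-coefficient gives $\langle fr(b), a\rangle_\mathrm{E} = \langle r(b), \bp\rangle_\mathrm{G} = \rank(r(b)) = 0$ by the first part, so $fr(b) = c\,a$ for some $c\in\bZ$ and hence $rfr(b) = c\,\bp$. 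Since $\langle u, \bp\rangle_\mathrm{G} = \rank(u)$, this yields $\langle r(b), u\rangle_\mathrm{G} = \langle f(u), b\rangle_\mathrm{E} - c\,\rank(u)$, and the $c$-terms cancel in the antisymmetric combination appearing on the right-hand side.

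It then remains to finish with a routine basis computation in $\mathrm{E}$: writing $f(u_i) = \alpha_i a + \beta_i b$, one reads off $\rank(u_i) = \langle f(u_i), a\rangle_\mathrm{E} = \beta_i$ and $\langle f(u_i), b\rangle_\mathrm{E} = -\alpha_i$, so both sides collapse to $\alpha_2\beta_1 - \alpha_1\beta_2 = \langle f(u_1), f(u_2)\rangle_\mathrm{E}$, verifying the identity. I expect the only genuine obstacle to be this middle manoeuvre of accessing $\langle r(b), u\rangle_\mathrm{G}$; the key observation that unlocks it is that $fr(b)$ is a multiple of $a$, so the spurious $\bp$-contributions are pure rank terms that annihilate upon antisymmetrization.
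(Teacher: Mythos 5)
Your proof is correct, and its overall skeleton matches the paper's: the first assertion is established by the identical computation $\langle \bp, r(b)\rangle_{\mathrm{G}} = \langle fr(a), b\rangle_{\mathrm{E}} = 0$, and the second by verifying the defining identity of the canonical class, reducing both sides to $\langle f(u_1), f(u_2)\rangle_{\mathrm{E}}$ via Lemma \ref{lem:useful} and a coordinate computation in $\mathrm{E}$, then invoking uniqueness from \cite[Lemma 3.12]{ecslc}. The one genuine divergence is how you handle the pairings in which $r(b)$ occupies the left-hand slot. The paper dispatches this in one line: since $q$ is symmetric (a consequence of condition (3) of surface-likeness), it computes $q(\lambda(u_1\wedge u_2), r(b))$ instead, so that $r(a)$ and $r(b)$ only ever appear in the right-hand slot and adjunction applies term by term. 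You instead keep $r(b)$ on the left and route through the Serre operator, $\langle r(b), u\rangle_{\mathrm{G}} = \langle u, (\mathrm{id}_{\mathrm{G}} - rf)(r(b))\rangle_{\mathrm{G}}$, which obliges you to prove the auxiliary fact $fr(b) = c\,a$ and then note that the spurious $c\,\rank(u_i)$ terms cancel upon antisymmetrization. Both mechanisms are sound; the paper's is shorter and uses only surface-likeness at that point, while yours invokes the relative $(-1)^0$-CY hypothesis a second time (beyond Lemma \ref{lem:useful}) and, as a by-product, establishes $fr(b)\in\Span(a)$ --- which is precisely the opening step of the paper's proof of Proposition \ref{prop:matrix}, where your constant $c$ is identified as $q(K_{\mathrm{G}},K_{\mathrm{G}})$.
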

\begin{proof}
The first claim is easy to prove: note that
\[
\langle\mathbf{p}, r(b) \rangle_{\mathrm{G}} = \langle r(a),r(b) \rangle_\mathrm{G} = \langle fr(a), b \rangle_\mathrm{E}
\]
and, since $T_f(a)=a$ by Proposition \ref{prop:issurfacelike}, we have $fr(a) = 0$, so $\langle \mathbf{p}, r(b) \rangle_\mathrm{G} = 0$.

To prove the second statement it is necessary to show that, for any $u_1,u_2 \in\mathrm{G}$,
\[
\langle u_1 , u_2 \rangle_\mathrm{G} - \langle u_2, u_1 \rangle_\mathrm{G} = q(r(b), \lambda(u_1 \wedge u_2) ).
\]
Noting that $q$ is symmetric, we begin by computing that
\begin{align*}
q(\lambda(u_1 \wedge u_2), r(b)) &=  -\langle \langle u_1 ,r(a) \rangle_\mathrm{G} u_2 - \langle u_2, r(a) \rangle_\mathrm{G} u_1, r(b) \rangle_\mathrm{G}\\
& =  -\langle u_1, r(a) \rangle_\mathrm{G} \langle u_2, r(b) \rangle_\mathrm{G} + \langle u_2, r(a) \rangle_\mathrm{G} \langle u_1, r(b) \rangle_\mathrm{G} \\ 
& = - \langle f(u_1), a \rangle_\mathrm{E} \langle f(u_2), b \rangle_\mathrm{E} + \langle f(u_2), a \rangle_\mathrm{E} \langle f(u_1), b \rangle_\mathrm{E}
\end{align*}
Note that there are pairs of integers $(p_1,q_1)$ and $(p_2,q_2)$ so that
\[
f(u_1) = p_1 a + q_1 b, \quad\quad f(u_2) = p_2 a + q_2 b.
\]
Then $p_i = -\langle f(u_i), b \rangle_\mathrm{E}$, and $q_i = \langle f(u_i), a \rangle_\mathrm{E}$ for $i =1,2$. Therefore, 
\begin{align*}
-\langle f(u_1), a \rangle_\mathrm{E} \langle f(u_2), b \rangle_\mathrm{E} + \langle f(u_2), a \rangle_\mathrm{E} \langle f(u_1), b \rangle_\mathrm{E}& = q_1p_2 - q_2 p_1 \\
&= \langle f(u_1) , f(u_2) \rangle_\mathrm{E}
\end{align*}
and, finally, Lemma \ref{lem:useful} gives $\langle f(u_1), f(u_2) \rangle_\mathrm{E} = \langle u_1, u_2 \rangle_\mathrm{G} - \langle u_2,u_1 \rangle_{\mathrm{G}}$. This completes the proof.
\end{proof}

Our final result shows that we can explicitly compute the twist $T_f$ in this setting.

\begin{proposition}\label{prop:matrix}
Suppose $(a,b)$ is a basis for $\mathrm{E}$ as in Definition \ref{def:E}. Let $f\colon \mathrm{G} \to \mathrm{E}$ be a relative $(-1)^0$-CY spherical homomorphism with right adjoint $r$ and suppose that $\mathrm{G}$ is surface-like with point-like vector $\bp = r(a)$. Then in the basis $(a,b)$ the twist $T_f$ has matrix $\begin{psmallmatrix} 1 & -d \\ 0 & 1\end{psmallmatrix}$, where $d := q(K_{\mathrm{G}},K_{\mathrm{G}})$. 
\end{proposition}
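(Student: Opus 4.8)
The plan is to compute the two columns of the matrix of $T_f = \mathrm{id}_{\mathrm{E}} - fr$ in the basis $(a,b)$ separately. The first column is immediate: Proposition \ref{prop:issurfacelike} gives $T_f(a) = a$, equivalently $fr(a) = 0$, so the first column is $(1,0)^T$. The bulk of the work is therefore to evaluate $T_f(b) = b - fr(b)$, for which I would write $fr(b) = \alpha a + \beta b$ and determine the integers $\alpha, \beta$.

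To extract $\alpha$ and $\beta$, I would reuse the coordinate formulas from the proof of Proposition \ref{prop:anticanonical}: for any $v = pa + qb \in \mathrm{E}$ one has $p = -\langle v, b \rangle_\mathrm{E}$ and $q = \langle v, a \rangle_\mathrm{E}$, which follow directly from the pairing in Definition \ref{def:E}. Applied to $v = fr(b)$ this gives $\beta = \langle fr(b), a \rangle_\mathrm{E}$ and $\alpha = -\langle fr(b), b \rangle_\mathrm{E}$. Both can then be rewritten via adjunction, using $\langle f(u), w \rangle_\mathrm{E} = \langle u, r(w) \rangle_\mathrm{G}$. For $\beta$ this yields $\beta = \langle r(b), r(a) \rangle_\mathrm{G} = \langle r(b), \mathbf{p} \rangle_\mathrm{G}$, which vanishes because Proposition \ref{prop:anticanonical} tells us $r(b) \in \mathbf{p}^\perp$. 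Hence $fr(b) = \alpha a$ and the matrix is already upper-triangular of the desired shape $\begin{psmallmatrix} 1 & -\alpha \\ 0 & 1 \end{psmallmatrix}$.

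It remains to identify $\alpha$ with $d := q(K_\mathrm{G}, K_\mathrm{G})$. Adjunction gives $\alpha = -\langle fr(b), b \rangle_\mathrm{E} = -\langle r(b), r(b) \rangle_\mathrm{G}$. Since $r(b) \in \mathbf{p}^\perp$, the definition of the N\'eron-Severi form applies and yields $-\langle r(b), r(b) \rangle_\mathrm{G} = q([r(b)], [r(b)])$. Finally $[r(b)] = -K_\mathrm{G}$ by Proposition \ref{prop:anticanonical}, so by bilinearity $q([r(b)], [r(b)]) = q(-K_\mathrm{G}, -K_\mathrm{G}) = q(K_\mathrm{G}, K_\mathrm{G}) = d$, giving $\alpha = d$ as required.

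I do not anticipate any genuine obstacle here: the statement follows by assembling the two preceding propositions together with the adjunction identity, and the whole argument is a short computation. The only point demanding care is the bookkeeping of the sign conventions of the antisymmetric form on $\mathrm{E}$ when reading off the coordinates of $fr(b)$; getting these right is precisely what produces the minus sign in the $(1,2)$ entry and the correct unsigned value $d$ there after the final substitution.
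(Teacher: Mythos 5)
Your proof is correct and takes essentially the same route as the paper's: both get the first column from Proposition \ref{prop:issurfacelike}, use Proposition \ref{prop:anticanonical} together with adjunction to show $fr(b)$ is an integer multiple of $a$, and then identify that multiple with $q(K_\mathrm{G},K_\mathrm{G})$ via the N\'eron-Severi pairing and $[r(b)] = -K_\mathrm{G}$. The only cosmetic difference is that you read off the coordinates of $fr(b)$ explicitly, whereas the paper phrases the same step as $fr(b) \in \prescript{\perp}{}a = \Span(a)$.
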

\begin{proof} Begin by noting that, by Proposition \ref{prop:issurfacelike}, $T_f(a) = a$. To find the matrix of $T_f$ it remains to compute $T_f(b) = b - fr(b)$.  By Proposition \ref{prop:anticanonical} we have $r(b) \in \mathbf{p}^{\perp}$ and $\mathbf{p}^{\perp} = \prescript{\perp}{}\bp$ as $\mathrm{G}$ is surface-like, so  
\[0 = \langle r(b), \mathbf{p} \rangle_{\mathrm{G}} =  \langle r(b), r(a) \rangle_{\mathrm{G}} = \langle fr(b), a \rangle_{\mathrm{E}}.\]
Thus $fr(b) \in \prescript{\perp}{}a = \Span(a)$, so $fr(b) = da$ for some $d \in \bZ$. This proves that $T_f(b) = b - da$, so the matrix of $T_f$ has the required form.

It remains to show that $d =q(K_{\mathrm{G}},K_{\mathrm{G}})$. By adjunction, 
\[
q(K_\mathrm{G}, K_\mathrm{G}) = -\langle r(b), r(b) \rangle_\mathrm{G} = -\langle fr(b) , b \rangle_\mathrm{E}.
\]
But $fr(b) = da$ by the argument above, so $\langle fr(b), b \rangle_\mathrm{E} = \langle da,b \rangle_\mathrm{E} = -d$. This shows that $q(K_{\mathrm{G}},K_{\mathrm{G}}) = d$, as required.
\end{proof}

\begin{remark} \label{rem:defect} Under the assumptions of Proposition \ref{prop:matrix}, it is immediate from the definition that the defect $\delta(\mathrm{G}) = 0$ if and only if $\rank(\mathrm{G}) = 12 - d$.
\end{remark}

\subsection{Quasi del Pezzo homomorphisms}

In this subsection we will give the central definition of this paper. However, in order to state it, we first define a useful and well-known invariant of a lattice, which unfortunately suffers from some ambiguity in the literature. Let $\mathrm{L}$ be a nondegenerate lattice (in the traditional sense of a finitely-generated free abelian group equipped with an integral symmetric bilinear form). Then the Gram matrix of the real extension $\mathrm{L} \otimes \mathbb{R}$ can always be written as a diagonal matrix, for some choice of basis, with all entries either $+1$ or $-1$. 

\begin{definition} \label{def:signature}
The pair of numbers $(n_+, n_-)$ counting the number of $+1$ and $-1$ entries, respectively, in the diagonal representation of the Gram matrix is called the \emph{signature} of $\mathrm{L}$. Define $\sigma(\mathrm{L}) :=n_+ - n_-$.
\end{definition}

\begin{remark}
The invariant $\sigma(\mathrm{L})$ is also often called the signature of $\mathrm{L}$; we will avoid this nomenclature for the sake of clarity.
\end{remark}

We now come to the central definition of this paper.

\begin{definition}\label{def:qdp}
A spherical homomorphism of pseudolattices $f\colon \mathrm{G} \to \mathrm{E}$ is called a \emph{quasi del Pezzo homomorphism} if there exists a basis $(a,b)$ for $\mathrm{E}$, as in Definition \ref{def:E}, so that all of the following conditions hold. 
\begin{enumerate}[(1)]
\item $\mathrm{G}$ is surface-like with point-like vector $\mathbf{p} = r(a)$.
\item $f$ is relative $(-1)^0$-Calabi-Yau.
\item $\mathrm{G}$ admits an exceptional basis $e_{\bullet} = (e_1,\ldots,e_n)$ such that $f(e_i) \in \mathrm{E}$ is primitive for each $1 \leq i \leq n$.
\item $\NS(\mathrm{G})$ has signature $(1,\mathrm{rank}(\NS(\mathrm{G})) - 1)$.
\end{enumerate}
\end{definition}

\begin{remark}\label{rem:equivalentcondition} By Proposition \ref{prop:issurfacelike}, condition (1) in this definition is equivalent to
\begin{enumerate}[(1')]
\item $r(a) \in \mathrm{G}$ is primitive and the twist $T_f$ satisfies $T_f(a) = a$.
\end{enumerate}
Note also that condition (3) implies that $\mathrm{G}$ is unimodular. Finally, condition (4) should be thought of as a weaker version of Kuznetsov's \emph{geometric} condition \cite[Definition 4.3]{ecslc}; by \cite[Lemma 4.5]{ecslc}, all pseudolattices arising in the usual way as $\mathrm{K}_0^{\mathrm{num}}(\mathbf{D}(X))$ for a smooth projective surface $X$ are geometric.
\end{remark}

\begin{definition} We say that two quasi del Pezzo homomorphisms $f_1\colon \mathrm{G}_1 \to \mathrm{E}$ and $f_2\colon \mathrm{G}_2 \to \mathrm{E}$ are \emph{isomorphic} if
\begin{itemize}
\item there are bases $(a_1,b_1)$ and $(a_2,b_2)$ for $\mathrm{E}$, so that $(a_i,b_i)$ satisfies the conditions of Definition \ref{def:qdp} for $f_i\colon \mathrm{G}_i \to \mathrm{E}$, and an automorphism $\varphi \colon \mathrm{E} \to \mathrm{E}$ taking $(a_1,b_1)$ to $(a_2,b_2)$,
\item there is an isomorphism of pseudolattices $\psi\colon \mathrm{G}_1 \to \mathrm{G}_2$ (i.e. an isomorphism of the underlying abelian groups which preserves the bilinear form), and
\item we have $\varphi f_1 = f_2 \psi$.
\end{itemize}
It is easy to check that the last part of this definition implies that $\psi r_1 = r_2 \varphi$, where $r_i$ is the right adjoint to $f_i$.
\end{definition}

The motivation behind Definition \ref{def:qdp} is the following example.

\begin{example} \label{ex:qdp} Let $X$ be a nonsingular rational surface and suppose that the anticanonical linear system $|-K_X|$ contains a smooth member $C$; such surfaces are called \emph{quasi del Pezzo surfaces}. By Example \ref{ex:relativeCY}, we have a relative $(-1)^0$-CY spherical homomorphism $i^*\colon \mathrm{K}_0^{\mathrm{num}}(\mathbf{D}(X)) \to \mathrm{K}_0^{\mathrm{num}}(\mathbf{D}(C))$, given by the derived pull-back under the inclusion $i\colon C \hookrightarrow X$. This homomorphism has right adjoint $i_*$, given by derived push-forward. We will show that  $i^*\colon \mathrm{K}_0^{\mathrm{num}}(\mathbf{D}(X)) \to \mathrm{K}_0^{\mathrm{num}}(\mathbf{D}(C))$ is a quasi del Pezzo homomorphism.

Identify $\mathrm{K}_0^{\mathrm{num}}(\mathbf{D}(C)) \cong \mathrm{E}$ as in Example \ref{ex:elliptic}(1): identify $a$ with the class $\{\calO_p\}$ of the structure sheaf $\calO_p$ of a point $p \in C$ and $b$ with the class $\{\calO_C\}$ of $\calO_C$. Then $i_*(a) = i_*(\{\calO_p\})$ is the class of the structure sheaf of a point in $X$, which is point-like in $\mathrm{K}_0^{\mathrm{num}}(\mathbf{D}(X))$ by Example \ref{ex:dx}. We have thus shown that conditions (1) and (2) hold.

To prove (3) and (4), note that it is not difficult to show (c.f. \cite{sdp}, \cite[Proposition 0.4]{asdps}) that if $(X,C)$ satisfies the assumptions above, then $(X,C)$ is either 
\begin{itemize}
\item a blow-up of a smooth cubic curve in $\bP^2$ in $k \geq 0$ points in almost general position (a set of points is in \emph{almost general position} if no stage of the blowing-up involves blowing up a point which lies on a rational $(-2)$-curve; infinitely near points are allowed as long as this condition is not violated), 
\item a smooth curve of bidegree $(2,2)$ in $\bP^1 \times \bP^1$, or 
\item a smooth anticanonical curve in the Hirzebruch surface $\bF_2$.
\end{itemize}

It is well-known that a full exceptional collection for $\mathbf{D}(\bP^2)$ is given by the line bundles $(\calO,\calO(1),\calO(2))$. Moreover, using work of Orlov \cite{pbmtdccs}, one can compute full exceptional collections for $\mathbf{D}(\bP^1 \times \bP^1)$ and $\mathbf{D}(\bF_2)$ (see, for example, Auroux, Katzarkov, and Orlov \cite[Section 2.7]{mswppnd} or Elagin and Lunts \cite[Section 2.5]{feclbdps}); for example, the collections $(\calO,\calO(1,0),\calO(0,1),\calO(1,1))$ (for $\bP^1\times \bP^1$) and $(\calO,\calO(f),\calO(s),\calO(s+f))$ (for $\bF_2$; here $s$ is the class of the $(-2)$-section and $f$ the class of a fibre) are full and exceptional. By Example \ref{ex:Grothendieck}, these exceptional collections give rise to exceptional bases in $\mathrm{K}_0^{\mathrm{num}}(\mathbf{D}(X))$ for $X = \bP^2$, $\bP^1 \times \bP^1$, and $\bF_2$, and it is easy to check that these exceptional bases satisfy condition (3).

It is also well-understood how blow-ups affect $\mathbf{D}(X)$ \cite[Section 4]{pbmtdccs} \cite[Section 2.1]{msdpsvccs}. Indeed, if $X$ is a surface such that $\mathrm{K}_0^{\mathrm{num}}(\mathbf{D}(X))$ has exceptional basis $\calB$, and $\varphi\colon \widetilde{X} \to X$ is a blow-up of a point in $X$ with exceptional curve $E$, then an exceptional basis for $\mathrm{K}_0^{\mathrm{num}}(\mathbf{D}(\widetilde{X}))$ is given by $(\varphi^*\calB, \{\calO_{E}\})$. Using this, it is easy to check inductively that all quasi del Pezzo surfaces admit exceptional bases satisfying condition (3); intuitively, this condition simply says that all exceptional $(-1)$-curves meet $C$ in a single point, which is an immediate consequence of the genus formula.

Finally, it is well known that $\NS(X)$ is an odd unimodular lattice of signature $(1,k)$ if $X$ is a blow-up of $\bP^2$ in $k$ points and an even unimodular lattice of signature $(1,1)$ if $X \cong \bP^1 \times \bP^1$ or $\bF_2$. Thus condition (4) holds, and $i^*\colon \mathrm{K}_0^{\mathrm{num}}(\mathbf{D}(X)) \to \mathrm{K}_0^{\mathrm{num}}(\mathbf{D}(C))$ is a quasi del Pezzo homomorphism.

In this setting, the results of Subsection \ref{sec:qdpprops} reproduce well-known facts.
\begin{itemize}
\item Proposition \ref{prop:issurfacelike} states that $T_f(a) = a$, i.e. that $i_*i^*(\{\calO_p\}) = 0$.
\item Proposition \ref{prop:anticanonical} states that $i_*(\{\calO_C\}) = \{\calO_X\} - \{\calO_X(-C)\}$ is anticanonical in $\NS(X)$. But, in $\NS(X)$, we have $[\calO_X] \equiv 0$ and $ - [\calO_X(-C)] \equiv [\calO_X(C)]$, so we obtain the familiar result that $[\calO_X(C)]$ is an anticanonical class. 
\item Proposition \ref{prop:matrix} states that
\[T_{i^*}(\{\calO_C\}) = \{\calO_C\} - i^*i_*(\{\calO_C\}) = \{\calO_C\} - d\{\calO_p\},\] 
where $d$ is the self-intersection $K_X^2 = C^2$; this is easily verified. 
\item As we know that the defect $\delta(\mathrm{K}_0^{\mathrm{num}}(\mathbf{D}(X))) = 0$ (see Example \ref{ex:dx}), Remark \ref{rem:defect} allows us to compute that $\rank(\mathrm{K}_0^{\mathrm{num}}(\mathbf{D}(X))) = 12 - d$.
\end{itemize}
\end{example}

\subsection{Classification of quasi del Pezzo homomorphisms}

In this section we will prove the first main result of this paper, which will show that the quasi del Pezzo homomorphisms given in Example \ref{ex:qdp} are, essentially, all that exist. To perform this classification we will run a modified version of Kuznetsov's \cite{ecslc} minimal model program for exceptional bases of pseudolattices. In order to describe this program, we first need a notion of minimality for exceptional bases.
 
\begin{definition} \cite[Section 4]{ecslc}
An exceptional basis $e_\bullet = (e_1,\ldots,e_n)$ of a surface-like pseudolattice is called \emph{norm minimal} if the sum
\[
\sum_{i=1}^d \rank(e_i)^2
\]
is minimal among all exceptional bases mutation equivalent to $e_\bullet$.
\end{definition}

Clearly, every exceptional basis can be mutated to obtain a norm minimal exceptional basis. Kuznetsov proves a number of results about norm-minimal exceptional bases; the important one for us is as follows.

\begin{theorem}\label{thm:kuz} \textup{\cite[Corollary 4.22]{ecslc}}
Let $\mathrm{G}$ be a surface-like pseudolattice so that the signature of $\mathrm{NS}(\mathrm{G})$ is $(1, \mathrm{rank}(\mathrm{NS}(\mathrm{G})) -1)$. If $e_\bullet$ is a norm minimal exceptional basis of $\mathrm{G}$ consisting of elements of non-zero rank, then the rank of $\mathrm{G}$ is $3$ or $4$.
\end{theorem}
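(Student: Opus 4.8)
The plan is to reduce the statement to the claim that $m := \rank(\NS(\mathrm{G}))$ satisfies $m \le 2$, and to derive this from the interplay between norm minimality and the hyperbolic signature of $\NS(\mathrm{G})$. First I would record that, since every $e_i$ has nonzero rank, the rank functional $\rank\colon \mathrm{G} \to \bZ$ is nonzero, so $\bp^{\perp} = \ker(\rank)$ has corank one and $\NS(\mathrm{G}) = \bp^{\perp}/\bp$ has rank $\rank(\mathrm{G}) - 2$; thus $\rank(\mathrm{G}) \in \{3,4\}$ is equivalent to $m \in \{1,2\}$. It is also useful to note the shape of the symmetrized Euler form $B(u,v) := \tfrac12(\langle u,v\rangle_{\mathrm{G}} + \langle v,u\rangle_{\mathrm{G}})$: on $\bp^{\perp}$ it descends to $-q$ on $\NS(\mathrm{G})$, while $\bp$ together with any $h \in \mathrm{G}\otimes\bQ$ of rank one spans a hyperbolic plane transverse to $\bp^{\perp}$. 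Hence the hypothesis on $\NS(\mathrm{G})$ is equivalent to $B$ having signature $(m,2)$; in particular $B$ has exactly two negative directions, which is the structural reason to expect a small answer.

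Next I would extract the constraints of norm minimality. Applying $\bL_{i,i+1}$ and $\bR_{i,i+1}$ and comparing the two affected values of $\rank(\cdot)^2$ with the original shows that, writing $r_i := \rank(e_i)$ and $a := \langle e_i, e_{i+1}\rangle_{\mathrm{G}}$, every consecutive pair satisfies
\[
(r_{i+1} - a\, r_i)^2 \ge r_{i+1}^2 \qquad\text{and}\qquad (r_i - a\, r_{i+1})^2 \ge r_i^2 .
\]
To organize these I would encode each generator by the plane vector $v_i := (\rank(e_i),\, q(K_{\mathrm{G}}, c_1(e_i))) \in \bQ^2$, where $c_1(e_i) := [e_i - r_i h] \in \NS(\mathrm{G})\otimes\bQ$. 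Both coordinates are linear in the generator, so mutation acts on the tuple $(v_1,\dots,v_n)$ exactly as it does on $(e_1,\dots,e_n)$; moreover the defining relation for $K_{\mathrm{G}}$ identifies the antisymmetric part $\langle e_i,e_j\rangle_{\mathrm{G}} - \langle e_j,e_i\rangle_{\mathrm{G}}$ with the determinant $r_i k_j - r_j k_i$ of $v_i, v_j$ (up to sign). In these terms norm minimality becomes a reduction-theoretic minimality for the configuration $(v_1,\dots,v_n)$ of plane vectors, all of which have nonzero first coordinate.

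The main obstacle — and the heart of the argument — is to convert this local minimality into the global bound $m \le 2$. My plan is to argue by contradiction: assuming $m \ge 3$ with signature $(1, m-1)$, I would apply the Hodge index theorem to $\NS(\mathrm{G})$ to produce a class of negative self-intersection (for instance among the $\lambda(e_i \wedge e_j)$ or their combinations, whose squares are computed by $q(\lambda(e_i\wedge e_j),\lambda(e_i\wedge e_j)) = r_i r_j \langle e_i, e_j\rangle_{\mathrm{G}} - r_i^2 - r_j^2$ for $i<j$), and then show that the presence of such a class allows one to mutate the basis so as to strictly decrease $\sum_i r_i^2$, contradicting norm minimality. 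Intuitively this is the pseudolattice shadow of the fact that a relatively minimal rational surface has Picard rank at most two: a hyperbolic $\NS(\mathrm{G})$ of rank $\ge 3$ carries enough negative classes that the mutation ``minimal model program'' can always contract one, which on the level of bases manifests as a rank-zero element forbidden by our hypotheses. Making this precise is delicate: one must control the signs of the ranks so that the Hodge-index class is genuinely realized by an admissible mutation, and one must treat uniformly the various lattice types permitted by unimodularity of $\mathrm{G}$ (which holds since $\mathrm{G}$ admits an exceptional basis). It is precisely here that both hypotheses are used, since relaxing either the nonzero-rank condition or the hyperbolic signature produces genuine examples of arbitrarily large rank.
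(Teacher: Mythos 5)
First, a point of order: the paper does not prove this statement at all. Theorem \ref{thm:kuz} is quoted from Kuznetsov \cite[Corollary 4.22]{ecslc}, and the only original content in the paper concerning it is the remark that follows, namely that Kuznetsov's proof uses only the signature hypothesis on $\NS(\mathrm{G})$ and never the characteristic condition \eqref{eq:characteristic}, so ``geometric'' may be weakened as in the statement. So your attempt can only be measured against Kuznetsov's argument. Your first two paragraphs are correct but purely preparatory: the count $\rank(\NS(\mathrm{G})) = \rank(\mathrm{G}) - 2$, the consecutive-pair inequalities $(r_{i+1}-ar_i)^2 \ge r_{i+1}^2$ and $(r_i - ar_{i+1})^2 \ge r_i^2$ coming from norm minimality, and the formula $q(\lambda(e_i\wedge e_j),\lambda(e_i\wedge e_j)) = r_ir_j\langle e_i,e_j\rangle_{\mathrm{G}} - r_i^2 - r_j^2$ are all standard and correctly stated.

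The genuine gap is your third paragraph, which you yourself call the heart of the argument: it contains no argument. The claim that a class of negative self-intersection in $\NS(\mathrm{G})$ ``allows one to mutate the basis so as to strictly decrease $\sum_i r_i^2$'' is essentially the theorem itself, and no mechanism producing such a mutation from such a class is offered. Worse, that implication is false. Take $\mathrm{G} = \mathrm{K}_0^{\mathrm{num}}(\mathbf{D}(\bP^1\times\bP^1))$, i.e.\ the pseudolattice $\mathrm{G}_4$ of Proposition \ref{prop:4classification}, with its standard exceptional basis: all four ranks equal $1$, and since $\NS(\mathrm{G}_4)$ is the even hyperbolic plane it contains no class of square $-1$, so $\mathrm{G}_4$ has no rank-zero exceptional element whatsoever; every exceptional basis therefore has norm at least $4$, and this basis is norm-minimal. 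Nevertheless $\NS(\mathrm{G}_4)$ has signature $(1,1)$ and contains negative classes realized from this very basis, e.g.\ $\lambda(e_2\wedge e_3) = [e_3 - e_2]$ has $q$-square $-2$. So ``negative class $\Rightarrow$ norm-decreasing mutation'' cannot be a valid principle; indeed the relationship runs opposite to what you want: in a norm-minimal basis with all ranks positive, a consecutive pair has $q(\lambda(e_i\wedge e_{i+1}),\lambda(e_i\wedge e_{i+1})) < 0$ precisely when $\langle e_i,e_{i+1}\rangle_{\mathrm{G}} \le 0$, and in that case both mutations of that pair weakly \emph{increase} the norm. Any correct proof must use $m \ge 3$ in a way that goes strictly beyond producing one negative class, and your sketch gives no indication of what that input is; your closing principle (``a hyperbolic $\NS(\mathrm{G})$ of rank $\ge 3$ carries enough negative classes that the mutation minimal model program can always contract one, which manifests as a rank-zero element forbidden by our hypotheses'') is just the contrapositive of the statement to be proved, i.e.\ circular. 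The entire mathematical content of \cite[Corollary 4.22]{ecslc} lies exactly in the step you defer, and Kuznetsov's actual analysis of norm-minimal bases does not take the form of a contraction argument of this kind.
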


\begin{remark}
In \cite{ecslc}, Theorem \ref{thm:kuz} is stated slightly differently. The condition that  $\mathrm{NS}(\mathrm{G})$ has signature $(1,\mathrm{rank}(\NS(\mathrm{G})) - 1)$ is replaced with the condition that $\mathrm{G}$ be \emph{geometric}, which imposes the additional condition that $K_\mathrm{G}$ satisfies
\begin{equation}\label{eq:characteristic}
q(K_\mathrm{G}, u) \equiv q(u,u) \bmod 2
\end{equation}
for each $u \in \mathrm{NS}(\mathrm{G})$. However, the proof of \cite[Corollary 4.22]{ecslc} only uses the signature condition, not the condition that (\ref{eq:characteristic}) holds.
\end{remark}

\begin{remark} Note that $\rank(\mathrm{G}) \geq 3$ for any $\mathrm{G}$ satisfying the conditions of Theorem \ref{thm:kuz}, as $1 \leq \rank(\NS(\mathrm{G})) = \rank(\mathrm{G}) - 2$ by the signature assumption.
\end{remark}

Let $f\colon \mathrm{G} \to \mathrm{E}$ be a quasi del Pezzo homomorphism and let $e_{\bullet} = (e_1,\ldots,e_n)$ be an exceptional basis as in Definition \ref{def:qdp}. We wish to mutate $e_{\bullet}$ to get a norm-minimal exceptional basis. However, in order to do this we must first show that condition (3) of Definition \ref{def:qdp} is preserved under mutations. This follows from the next lemma.

\begin{lemma} \label{lem:mutation} Let $f\colon \mathrm{G} \to \mathrm{E}$ be a quasi del Pezzo homomorphism and let $e_{\bullet} = (e_1,\ldots,e_n)$ be an exceptional basis as in Definition \ref{def:qdp}. Then for any $1 \leq i \leq n-1$, the elements $f(\bL_{e_i}(e_{i+1}))$ and $f(\bR_{e_{i+1}}(e_i))$ are primitive in $\mathrm{E}$.
\end{lemma}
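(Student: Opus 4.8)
The plan is to push everything through $f$ and work inside $\mathrm{E} \cong \bZ^2$, where an element is primitive precisely when its two coordinates are coprime. Write $f(e_i) = p_i a + q_i b$ and $f(e_{i+1}) = p_{i+1} a + q_{i+1} b$; by condition (3) of Definition \ref{def:qdp} both vectors are primitive, so $\gcd(p_i,q_i) = \gcd(p_{i+1},q_{i+1}) = 1$. Applying $f$ to the two mutation formulas gives
\[
f(\bL_{e_i}(e_{i+1})) = f(e_{i+1}) - c\, f(e_i), \qquad f(\bR_{e_{i+1}}(e_i)) = f(e_i) - c\, f(e_{i+1}),
\]
where $c := \langle e_i, e_{i+1} \rangle_{\mathrm{G}}$ is the common mutation coefficient. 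So the statement reduces to showing that subtracting a $c$-multiple of one primitive vector from the other leaves a primitive vector.

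The crucial step is to identify $c$ explicitly. Since $f$ is relative $(-1)^0$-CY (condition (2)) and $e_\bullet$ is an exceptional basis with $i < i+1$, we have $\langle e_{i+1}, e_i \rangle_{\mathrm{G}} = 0$, so Lemma \ref{lem:useful} (with $n = 0$) gives
\[
\langle f(e_i), f(e_{i+1}) \rangle_{\mathrm{E}} = \langle e_i, e_{i+1} \rangle_{\mathrm{G}} - \langle e_{i+1}, e_i \rangle_{\mathrm{G}} = c.
\]
Evaluating the left-hand side directly from the form on $\mathrm{E}$ in Definition \ref{def:E} then yields $c = q_i p_{i+1} - p_i q_{i+1}$, the $2\times 2$ determinant of the coordinate vectors of $f(e_i)$ and $f(e_{i+1})$. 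This is the only conceptual input; everything downstream is divisibility bookkeeping.

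With $c$ in this form, primitivity follows from an elementary gcd argument. For the left mutation, I would set $g := \gcd(p_{i+1} - c p_i,\, q_{i+1} - c q_i)$ and observe that the combination $q_i(p_{i+1} - c p_i) - p_i(q_{i+1} - c q_i) = q_i p_{i+1} - p_i q_{i+1} = c$ forces $g \mid c$; since $g$ also divides $p_{i+1} - c p_i$ and $q_{i+1} - c q_i$, it then divides $p_{i+1}$ and $q_{i+1}$, and primitivity of $f(e_{i+1})$ forces $g = 1$. The right mutation is handled by the identical computation with the roles of $f(e_i)$ and $f(e_{i+1})$ interchanged, concluding via primitivity of $f(e_i)$ instead.

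I expect no serious obstacle here: the one substantive point is recognizing, via Lemma \ref{lem:useful} and the exceptional-basis vanishing $\langle e_{i+1}, e_i \rangle_{\mathrm{G}} = 0$, that the mutation coefficient $c$ coincides with the determinant $q_i p_{i+1} - p_i q_{i+1}$. Once $c$ is pinned down in this way, the remaining divisibility manipulation is routine, the only care needed being the sign conventions $\langle a,b\rangle_{\mathrm{E}} = -1$, $\langle b,a\rangle_{\mathrm{E}} = 1$ when expanding the form on $\mathrm{E}$.
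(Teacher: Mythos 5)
Your proof is correct, and its one substantive input is exactly the paper's: both arguments invoke Lemma \ref{lem:useful} together with the exceptionality relation $\langle e_{i+1}, e_i \rangle_{\mathrm{G}} = 0$ to conclude that $\langle f(e_i), f(e_{i+1}) \rangle_{\mathrm{E}} = \langle e_i, e_{i+1} \rangle_{\mathrm{G}}$, so that applying $f$ to a mutation in $\mathrm{G}$ produces the corresponding mutation-type operation inside $\mathrm{E}$. The two proofs part ways only at the final primitivity check. The paper observes that $w \mapsto w - \langle f(e_i), w \rangle_{\mathrm{E}}\, f(e_i)$ is a linear operator on $\mathrm{E}$ whose matrix in the basis $(a,b)$ is $\begin{psmallmatrix} 1-p_iq_i & p_i^2 \\ -q_i^2 & 1+p_iq_i \end{psmallmatrix} \in \mathrm{SL}(2,\bZ)$, and an element of $\mathrm{SL}(2,\bZ)$ is an automorphism of $\bZ^2$, hence carries primitive vectors to primitive vectors; right mutations are then handled by an analogous computation or as inverses of left ones. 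You instead pin down the coefficient $c = q_i p_{i+1} - p_i q_{i+1}$ explicitly and run a B\'{e}zout-style divisibility argument: any common divisor of the coordinates of $f(e_{i+1}) - c\, f(e_i)$ divides $c$, hence divides $p_{i+1}$ and $q_{i+1}$, and so equals $1$. Both finishes are sound and of comparable length. The paper's version has the advantage of exhibiting the operator as the Dehn twist matrix $M_{p_i,q_i}$, which foreshadows the mapping class group computations of Section \ref{sec:fibrations}; yours is more elementary, requiring only gcd bookkeeping rather than the recognition of unimodularity, and it treats the right mutation by a genuinely symmetric computation instead of an appeal to inverses.
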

\begin{proof}  By Lemma \ref{lem:useful}, for any $1 \leq i \leq n-1$ we have
\begin{equation} \label{eq:fcommutes} \langle f(e_i),f(e_{i+1}) \rangle_{\mathrm{E}} =  \langle e_i,e_{i+1} \rangle_{\mathrm{G}} - \langle e_{i+1}, e_i \rangle_{\mathrm{G}} = \langle e_i,e_{i+1} \rangle_{\mathrm{G}},\end{equation}
where we have used the fact that $e_{\bullet}$ is exceptional.

 Now write $f(e_i) = p_i a + q_i b$ in a basis $(a,b)$ of $\mathrm{E}$ such that Definition \ref{def:qdp} holds. By Equation \eqref{eq:fcommutes} we have $f(\bL_{e_i}(e_{i+1})) = f(e_{i+1}) - \langle f(e_i),f(e_{i+1}) \rangle_{\mathrm{E}} f(e_i)$, from which it is easy to compute that
\[ f(\bL_{e_i}(e_{i+1})) = \begin{pmatrix} 1 - p_iq_i & p_i^2 \\ -q_i^2 & 1+p_iq_i \end{pmatrix} f(e_{i+1}),\]
where we treat elements of $\mathrm{E}$ as column vectors in the basis $(a,b)$. But this matrix is an element of $\mathrm{SL}(2,\bZ)$, so its action takes primitive elements to primitive elements. This proves the statement for left mutations; the statement for right mutations follows by an analogous computation (or from the fact that they are the inverses of left mutations).\end{proof}

Therefore, after mutation, we may assume that the exceptional basis $e_{\bullet}$ of Definition \ref{def:qdp}(3) is norm-minimal. Then the hypotheses of Theorem \ref{thm:kuz} are satisfied, so we see that if $\rank(\mathrm{G}) > 4$, then $e_{\bullet}$ must contain elements of zero rank. Such elements are characterized by the following lemma.

\begin{lemma}\label{lem:mmp}
Let $f\colon \mathrm{G} \to \mathrm{E}$ be a quasi del Pezzo homomorphism. Choose a basis $(a,b)$ for $\mathrm{E}$ and an exceptional basis $e_{\bullet} = (e_1,\ldots,e_n)$ for $\mathrm{G}$ so that Definition \ref{def:qdp} holds. Then for any $1 \leq i \leq n$, $\rank(e_i) = 0$ if and only if $f(e_i) = \pm a \in \mathrm{E}$.
\end{lemma}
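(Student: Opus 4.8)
The plan is to reduce everything to a one-line computation of $\rank(e_i)$ in terms of the coordinates of $f(e_i)$ with respect to the basis $(a,b)$, and then to invoke primitivity. Recall that the rank function is defined by $\rank(u) = \langle u, \mathbf{p}\rangle_{\mathrm{G}}$, where here $\mathbf{p} = r(a)$ by condition (1) of Definition \ref{def:qdp}. First I would rewrite this using the adjunction $\langle f(u), v\rangle_{\mathrm{E}} = \langle u, r(v)\rangle_{\mathrm{G}}$: taking $u = e_i$ and $v = a$ gives
\[
\rank(e_i) = \langle e_i, r(a)\rangle_{\mathrm{G}} = \langle f(e_i), a\rangle_{\mathrm{E}}.
\]

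Next, write $f(e_i) = p_i a + q_i b$ for integers $p_i, q_i$. Using the defining relations $\langle a, a\rangle_{\mathrm{E}} = 0$ and $\langle b, a\rangle_{\mathrm{E}} = 1$ from Definition \ref{def:E}, the right-hand side evaluates immediately to
\[
\langle f(e_i), a\rangle_{\mathrm{E}} = p_i\langle a,a\rangle_{\mathrm{E}} + q_i\langle b,a\rangle_{\mathrm{E}} = q_i,
\]
so $\rank(e_i) = q_i$. In particular $\rank(e_i) = 0$ if and only if $q_i = 0$, that is, if and only if $f(e_i) = p_i a$ is an integer multiple of $a$.

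It then remains only to pin down the coefficient $p_i$ in this case. Here I would invoke condition (3) of Definition \ref{def:qdp}, which asserts that $f(e_i)$ is primitive in $\mathrm{E}$: since $a$ is itself primitive, a multiple $p_i a$ can be primitive only when $p_i = \pm 1$, giving $f(e_i) = \pm a$. Conversely, if $f(e_i) = \pm a$ then $q_i = 0$ and hence $\rank(e_i) = 0$, completing the equivalence. I do not expect any genuine obstacle in this argument: the entire content is the adjunction identity together with the primitivity hypothesis, and the only minor point to keep in mind is that the two expressions $\langle e_i, \mathbf{p}\rangle_{\mathrm{G}}$ and $\langle \mathbf{p}, e_i\rangle_{\mathrm{G}}$ for the rank agree, which holds because $\mathbf{p}$ is point-like (Definition \ref{def:surfacelike}(2)).
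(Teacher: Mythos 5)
Your proof is correct and follows essentially the same route as the paper's: both arguments identify $\rank(e_i) = \langle f(e_i), a\rangle_{\mathrm{E}}$ via adjunction with $\mathbf{p} = r(a)$, deduce that $f(e_i)$ must be a multiple of $a$ (your explicit coordinate computation $\rank(e_i) = q_i$ is just a spelled-out version of the paper's observation that $\prescript{\perp}{}a = \Span(a)$), and conclude $f(e_i) = \pm a$ from the primitivity hypothesis of Definition \ref{def:qdp}(3). No gaps; the argument is complete as written.
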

\begin{proof}
By Proposition \ref{prop:issurfacelike}, we have the point-like vector $\mathbf{p} = r(a) \in \mathrm{G}$. By definition, $\rank(e_i) = 0$ if and only if $\langle e_i, r(a) \rangle_{\mathrm{G}} = 0$ and, by adjunction, this is equivalent to $\langle f(e_i), a \rangle_\mathrm{E} = 0$. Moreover, we know that $\prescript{\perp}{}a = \mathrm{Span}(a)$, so $\rank(e_i) = 0$ if and only if $f(e_i)$ is a multiple of $a$. But $f(e_i)$ is primitive, so $f(e_i) = \pm a$.
\end{proof}

Using this, we obtain the following theorem, which should be thought of as a more refined version of Kuznetsov's notion of contraction \cite[Section 5.1]{ecslc}.

\begin{theorem} \label{thm:contraction} Let $f\colon \mathrm{G} \to \mathrm{E}$ be a quasi del Pezzo homomorphism with $\rank(\mathrm{G}) > 4$. Choose a basis $(a,b)$ for $\mathrm{E}$ and an exceptional basis $e_{\bullet} = (e_1,\ldots,e_n)$ for $\mathrm{G}$ so that Definition \ref{def:qdp} holds. Let $\zeta\colon \mathrm{Z}(a) \to \mathrm{E}$ be defined as in Example \ref{ex:sphob} and let $z \in \mathrm{Z}(a)$ denote its generator.

Then there is a quasi del Pezzo homomorphism $f'\colon \mathrm{G}' \to \mathrm{E}$ with $\rank(\mathrm{G}') = \rank(\mathrm{G})-1$ and an exceptional basis $e_{\bullet}'$ for $\mathrm{G}'$, satisfying Definition \ref{def:qdp}, so that, after performing a series of mutations on $e_{\bullet}$, we have an isomorphism of pseudolattices $\psi$ that fits into the following diagram
\[\xymatrix{\mathrm{G} \ar[rr]^-{\psi} \ar[rd]^{f} & & \mathrm{G}' \oright_{\mathrm{E}}  \mathrm{Z}(a)\ar[ld]_{f' \oright \zeta} \\
& \mathrm{E} &
}\]
satisfying $\psi(e_{\bullet}) = (e_{\bullet}',z)$. 
\end{theorem}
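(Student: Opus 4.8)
The plan is to realise $\mathrm{G}$ as a gluing $\mathrm{G}' \oright_{\mathrm{E}} \mathrm{Z}(a)$ by splitting off a single rank-zero exceptional element, and then to check that the complementary summand is again quasi del Pezzo. First I would mutate $e_{\bullet}$ to a norm-minimal exceptional basis; by Lemma \ref{lem:mutation} this preserves condition (3) of Definition \ref{def:qdp}, while conditions (1), (2), (4) are intrinsic to $(\mathrm{G},f)$ and so survive automatically. Since $\rank(\mathrm{G}) > 4$ and $\NS(\mathrm{G})$ has signature $(1, \rank(\NS(\mathrm{G}))-1)$, Theorem \ref{thm:kuz} forces this basis to contain an element of rank zero, whose image under $f$ is $\pm a$ by Lemma \ref{lem:mmp}. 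Using left mutations $\mathbf{L}_{i,i+1}$ I would slide this element (which is itself left unchanged by these mutations) into the final position; replacing it by its negative if necessary, I may assume the last basis vector $e_n$ satisfies $f(e_n) = a$ and $\rank(e_n) = 0$. I then set $\mathrm{G}' := \Span(e_1, \dots, e_{n-1})$ with the restricted form, $f' := f|_{\mathrm{G}'}$, $e_{\bullet}' := (e_1, \dots, e_{n-1})$, and define $\psi$ by $\psi(e_i) = e_i'$ for $i < n$ and $\psi(e_n) = z$.

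Checking that $\psi\colon \mathrm{G} \xrightarrow{\sim} \mathrm{G}'\oright_{\mathrm{E}} \mathrm{Z}(a)$ is the claimed pseudolattice isomorphism is then a short computation. Pairings among $e_1, \dots, e_{n-1}$ agree with those in $\mathrm{G}'$ by definition; the pairing $\langle z, e_i' \rangle$ vanishes in the gluing, matching $\langle e_n, e_i \rangle_{\mathrm{G}} = 0$ (exceptionality, as $n > i$); and the crucial relation $\langle e_i, e_n \rangle_{\mathrm{G}} = \langle f(e_i), a \rangle_{\mathrm{E}}$ follows from Lemma \ref{lem:useful} together with $f(e_n) = a$ and $\langle e_n, e_i \rangle_{\mathrm{G}} = 0$, which is exactly the off-diagonal entry $\langle f'(e_i'), \zeta(z) \rangle_{\mathrm{E}}$ prescribed by Definition \ref{def:gluing}. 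Commutativity of the triangle is immediate from $f'(e_i') = f(e_i)$ and $\zeta(z) = a = f(e_n)$.

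The substance of the argument is showing $f'$ is quasi del Pezzo. Here $\mathrm{G}'$ is unimodular (being carried by a full exceptional sequence), so $f'$ has a right adjoint $r'$ and $\mathrm{G}'$ has a Serre operator; condition (3) is inherited verbatim. For condition (2) I would prove $C_{f'} = S_{\mathrm{G}'}$ directly: since $f' = f|_{\mathrm{G}'}$ and the form restricts, $f'$ inherits from $f$ the identity of Lemma \ref{lem:useful}, and feeding this into $\langle u_2, C_{f'}(u_1)\rangle_{\mathrm{G}'} = \langle u_2, u_1\rangle_{\mathrm{G}'} - \langle f'(u_2), f'(u_1)\rangle_{\mathrm{E}}$ collapses it to $\langle u_1, u_2\rangle_{\mathrm{G}'}$, which is the defining property of the Serre operator. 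As $C_{f'} = S_{\mathrm{G}'}$ is then an automorphism, the twist--cotwist equivalence of Definition \ref{def:spherical}(2) makes $T_{f'}$ invertible, so $f'$ is spherical and relative $(-1)^0$-CY in one stroke. For condition (1), I would first check $\mathbf{p} = r(a) \in \mathrm{G}'$ (expanding $\mathbf{p}$ in the basis and using $\langle \mathbf{p}, e_n\rangle_{\mathrm{G}} = \rank(e_n) = 0$ together with $\rank(\mathbf{p}) = 0$ forces the $e_n$-coefficient to vanish), then note that adjunction gives $r'(a) = \mathbf{p}$, which is primitive in $\mathrm{G}'$ because it is primitive in $\mathrm{G}$; since $f'(\mathbf{p}) = f(\mathbf{p}) = fr(a) = 0$ we get $T_{f'}(a) = a$, so Proposition \ref{prop:issurfacelike} yields that $\mathrm{G}'$ is surface-like with point-like vector $r'(a)$.

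Condition (4) is where I expect the one genuinely nontrivial input, and it turns on the observation that $[e_n]$ is \emph{orthogonal} to $\NS(\mathrm{G}')$ inside $\NS(\mathrm{G})$. Concretely, $e_n \in \mathbf{p}^{\perp}$ (as $\rank(e_n) = 0$), and for any class represented by $u' \in \mathbf{p}^{\perp} \cap \mathrm{G}'$ one computes $\langle u', e_n\rangle_{\mathrm{G}} = \rank(u') = 0$; combined with $\mathbf{p}^{\perp_{\mathrm{G}}} = \mathbf{p}^{\perp_{\mathrm{G}'}} \oplus \bZ e_n$ this yields an orthogonal decomposition $\NS(\mathrm{G}) = \NS(\mathrm{G}') \perp \bZ[e_n]$ with $q([e_n],[e_n]) = -1$. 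Since the rank-one summand is negative definite, the signature $(1, \rank(\NS(\mathrm{G}))-1)$ of $\NS(\mathrm{G})$ forces $\NS(\mathrm{G}')$ to have signature $(1, \rank(\NS(\mathrm{G}'))-1)$, establishing condition (4) and completing the proof that $f'$ is quasi del Pezzo. The main obstacle throughout is that Proposition \ref{prop:glueCY} produces spherical, relative CY gluings only in the forward direction, so it cannot be invoked to transfer structure from $f = f' \oright \zeta$ back to $f'$; the resolution is to sidestep it entirely by computing the cotwist $C_{f'}$ by hand and letting the twist--cotwist equivalence supply sphericity for free.
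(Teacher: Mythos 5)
Your proposal is correct, and its skeleton coincides with the paper's proof: mutate to a norm-minimal basis, invoke Theorem \ref{thm:kuz} and Lemma \ref{lem:mmp} to produce an exceptional element of rank zero with image $\pm a$, slide it to the last slot by left mutations (with Lemma \ref{lem:mutation} preserving condition (3)), set $\mathrm{G}'$ equal to the span of the remaining basis elements, and verify the gluing isomorphism $\psi$ on the exceptional basis exactly as in the paper's Step 3; your treatment of conditions (1) and (4) also matches the paper's Step 2, with the orthogonal splitting $\NS(\mathrm{G}) = \NS(\mathrm{G}') \perp \bZ[e_n]$, $q([e_n],[e_n]) = -1$, spelled out in slightly more detail than the paper bothers to. Where you genuinely diverge is in proving that $f'$ is spherical and relative $(-1)^0$-CY. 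The paper's Step 1 writes down the right adjoint explicitly, as $r'(v) = r(v) + \langle e_{\alpha},r(v) \rangle_{\mathrm{G}}(r(a) - e_{\alpha})$, verifies adjointness by hand using the identity $\langle u, r(a)\rangle_{\mathrm{G}} = \langle u, e_{\alpha}\rangle_{\mathrm{G}}$ for $u \in \mathrm{G}'$, and then grinds out the cotwist from this formula; it also reads off $r'(a) = r(a)$ from the same formula when checking condition (1'). You instead obtain $r'$ for free from unimodularity of $\mathrm{G}'$ (which has an exceptional basis), derive $C_{f'} = S_{\mathrm{G}'}$ in one line by combining adjunction with the restriction to $\mathrm{G}'$ of the Lemma \ref{lem:useful} identity for $f$, let the twist--cotwist equivalence of Definition \ref{def:spherical} supply invertibility of $T_{f'}$, and recover $r'(a) = \mathbf{p}$ from nondegeneracy of the form on $\mathrm{G}'$ rather than from an explicit formula. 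This abstract route is shorter and less computation-heavy than the paper's, and loses nothing needed elsewhere in the argument; the only price is that no closed formula for $r'$ is produced.
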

\begin{proof} As argued above, after a series of mutations we may assume that $\mathrm{G}$ has an exceptional basis $e_{\bullet} = (e_1,\ldots,e_n)$ which is norm-minimal and satisfies Definition \ref{def:qdp}.  Theorem \ref{thm:kuz} then gives that $e_{\bullet}$ contains an element $e_{\alpha}$ of rank zero and so, after possibly multiplying $e_{\alpha}$ by $(-1)$, we may assume that $f(e_{\alpha}) = a \in \mathrm{E}$, by Lemma \ref{lem:mmp}. After a series of left mutations, we may convert $e_{\bullet}$ to a new exceptional basis $\hat{e}_{\bullet} := (e_1',\ldots,e_{n-1}',e_{\alpha})$. By Lemma \ref{lem:mutation}, $\hat{e}_{\bullet}$ satisfies Definition \ref{def:qdp}.

Let $\mathrm{G}'$ denote the sub-pseudolattice of $\mathrm{G}$ defined by $\mathrm{G}' := e_{\alpha}^{\perp}$ and let $f'\colon \mathrm{G}' \to \mathrm{E}$ be the induced homomorphism of abelian groups; note that $e_{\bullet}' := (e_1',\ldots,e_{n-1}')$ forms an exceptional basis for $\mathrm{G}'$ and $\rank(\mathrm{G}') = \rank(\mathrm{G})-1$. We claim that $f'\colon \mathrm{G}' \to \mathrm{E}$ satisfies the conclusion of the theorem. The proof proceeds in three steps.

\textbf{Step 1.} We first show that $f'$ is a spherical homomorphism. Define a map $s\colon \mathrm{E} \to \mathrm{G}'$ by
\[s(v) := r(v)  - \langle e_{\alpha},r(v) \rangle_{\mathrm{G}} e_{\alpha},\]
where $r$ is the right adjoint to $f$. We begin by checking that $s(v) \in \mathrm{G}'$ for all $v \in \mathrm{E}$, so that $s$ is well-defined. Indeed, write $r(v) =   \sum_{i=1}^{n-1} k_i e_i' + k_{\alpha}e_{\alpha}$ in terms of the exceptional basis $\hat{e}_{\bullet}$ of $\mathrm{G}$, for $k_i, k_{\alpha}\in \bZ$. Then $\langle e_{\alpha}, r(v) \rangle_{\mathrm{G}} = k_{\alpha}$, as $\hat{e}_{\bullet}$ is exceptional, so $s(v) = \sum_{i=1}^{n-1} k_i e_i' \in \mathrm{G}'$.

Next define $r'\colon \mathrm{E} \to \mathrm{G}'$ by
\[r'(v) = s(v) + \langle e_{\alpha},r(v)\rangle_{\mathrm{G}} s(a) = r(v)  + \langle e_{\alpha},r(v) \rangle_{\mathrm{G}}(r(a) - e_{\alpha}),\] 
where we have used that fact that $\langle e_{\alpha},r(a) \rangle_{\mathrm{G}} = \langle f(e_{\alpha}),a \rangle_{\mathrm{G}} = \langle a,a \rangle_{\mathrm{E}} = 0$ to compute $s(a)$.

We claim that $r'$ is a right adjoint to $f'$. Indeed, for any $u \in \mathrm{G}'$ and $v \in \mathrm{E}$, we have
\[\langle u,r'(v) \rangle_{\mathrm{G}'} = \langle u, r(v) \rangle_{\mathrm{G}} + \langle e_{\alpha},r(v) \rangle_{\mathrm{G}}\left(  \langle u, r(a) \rangle_{\mathrm{G}} -  \langle u, e_{\alpha}\rangle_{\mathrm{G}} \right) \]
Now note that, using Lemma \ref{lem:useful}, for any $u \in \mathrm{G}'$ we have
\begin{equation} \label{eq:raealpha}\langle u, r(a) \rangle_{\mathrm{G}} = \langle f(u), a \rangle_{\mathrm{E}} = \langle f(u), f(e_{\alpha}) \rangle_{\mathrm{E}} = \langle u, e_{\alpha} \rangle_{\mathrm{G}} - \langle e_{\alpha}, u\rangle_{\mathrm{G}} = \langle u, e_{\alpha} \rangle_{\mathrm{G}},\end{equation}
as $\langle e_{\alpha}, u\rangle_{\mathrm{G}} = 0$. So we have
\[\langle u,r'(v) \rangle_{\mathrm{G}'} =  \langle u, r(v) \rangle_{\mathrm{G}} = \langle f(u), v \rangle_{\mathrm{E}} = \langle f'(u), v \rangle_{\mathrm{E}},\]
and $r'$ is a right adjoint to $f'$.

For $u \in \mathrm{G}'$, the cotwist endomorphism is given by 
\begin{align*}C_{f'}(u) &= u - r'f'(u) \\
&= u - rf(u)  - \langle e_{\alpha},rf(u) \rangle_{\mathrm{G}}(r(a) - e_{\alpha}) \\
&= C_f(u)  - \langle e_{\alpha},rf(u) \rangle_{\mathrm{G}}(r(a) - e_{\alpha}).\end{align*}
Then for any $u_1,u_2 \in \mathrm{G}'$, we have
\begin{align*}\langle u_1,u_2 \rangle_{\mathrm{G}'} &=  \langle u_1,u_2 \rangle_{\mathrm{G}} \\
&= \langle u_2,C_f(u_1) \rangle_{\mathrm{G}}\\
&= \langle u_2,C_{f'}(u_1) \rangle_{\mathrm{G}} + \langle e_{\alpha},rf(u) \rangle_{\mathrm{G}}\left(\langle u_2,r(a) \rangle_{\mathrm{G}} - \langle u_2,e_{\alpha} \rangle_{\mathrm{G}}\right)\\
&= \langle u_2,C_{f'}(u_1) \rangle_{\mathrm{G}'},\end{align*}
where we have used Equation \eqref{eq:raealpha} and the fact that $S_{\mathrm{G}} = C_{f}$ (as $f$ is $(-1)^0$-CY). Thus we see that $C_{f'}$ is the Serre operator $S_{\mathrm{G}'}$ for $\mathrm{G}'$, which is invertible as $\mathrm{G}'$ is unimodular (since it has an exceptional basis). We therefore conclude that $f'\colon \mathrm{G}' \to \mathrm{E}$ is spherical.

\textbf{Step 2.} We next show that $f'\colon \mathrm{G}' \to \mathrm{E}$ is a quasi del Pezzo homomorphism. To do this we need to check the conditions of Definition \ref{def:qdp}. Condition (3) is satisfied by the exceptional basis $e_{\bullet}'$ for $\mathrm{G}'$, by construction, and Condition (2) is just the fact (proved in Step 1) that $C_{f'} = S_{\mathrm{G}'}$.

We prove the equivalent condition (1') in place of condition (1) (see Remark \ref{rem:equivalentcondition}). Note that, since $\langle e_{\alpha}, r(a) \rangle_{\mathrm{G}} = \langle f(e_{\alpha}), a \rangle_{\mathrm{E}} = \langle a,a \rangle_{\mathrm{E}} = 0$, we have $r'(a) = r(a)$. Thus, since $r(a)$ is primitive, so is $r'(a)$, and we compute that
\[T_{f'}(a) = a - f'r'(a) = a - fr(a) = T_f(a) = a,\]
as required.

Finally, for condition (4), note that the point-like vector $\mathbf{p} = r(a)$ lies in $\mathrm{G}'$, so $\NS(\mathrm{G}')$ is equal to the orthogonal complement of $e_{\alpha}$ in $\NS(\mathrm{G})$. But $q(e_{\alpha},e_{\alpha}) = -1$ in $\NS(\mathrm{G})$, as $e_{\alpha}$ is exceptional, so $\NS(\mathrm{G}')$ has signature $(1, \rank(\NS(\mathrm{G})) - 2)$, as required.

\textbf{Step 3.} Finally, we prove that there is an isomorphism of pseudolattices $\psi\colon \mathrm{G} \to \mathrm{G}' \oright_{\mathrm{E}}  \mathrm{Z}(a)$ that commutes with the projections to $\mathrm{E}$. Let $z$ denote the generator of $\mathrm{Z}(a)$; recall that $\langle z,z \rangle_{\mathrm{Z}(a)} =1$ and the spherical homomorphism $\zeta\colon\mathrm{Z}(a) \to \mathrm{E}$ maps $z$ to $a$, by definition.

Now let $u \in \mathrm{G}$ be any element. Write $u$ in terms of the exceptional basis $\hat{e}_{\bullet}$ for $\mathrm{G}$ as $u = \sum_{i=1}^{n-1} k_i e_i' + k_{\alpha}e_{\alpha} = u' + k_{\alpha}e_{\alpha}$, where $u' := \sum_{i=1}^{n-1} k_i e_i' \in \mathrm{G}'$ and $k_i, k_{\alpha}\in \bZ$. Then define $\psi(u) := \left(u',\ k_{\alpha}z \right) \in  \mathrm{G}' \oright_{\mathrm{E}}  \mathrm{Z}(a)$. The map $\psi$ is clearly an isomorphism on the underlying abelian groups and we have
\[(f' \oright \zeta)\psi(u) = f'\left(\sum_{i=1}^{n-1} k_i e_i'\right) + k_{\alpha} a = \sum_{i=1}^{n-1} k_i f(e_i') + k_{\alpha} f(e_{\alpha}) = f(u).\]
Finally, let $u_1,u_2 \in \mathrm{G}$ be any two elements and write $u_j = u_j' + k_{\alpha j}e_{\alpha}$ with $u_j' \in \mathrm{G}'$ and $k_{\alpha j} \in \bZ$, for each $j \in \{1,2\}$. Then
\begin{align*}\langle u_1,u_2 \rangle_{\mathrm{G}} &= \langle u_1', u_2'\rangle_{\mathrm{G}} + \langle u_1',  k_{\alpha 2}e_{\alpha}\rangle_{\mathrm{G}} + \langle k_{\alpha 1}e_{\alpha} , k_{\alpha 2}e_{\alpha} \rangle_{\mathrm{G}} \\
&= \langle u_1', u_2'\rangle_{\mathrm{G}'}+ \langle f(u_1'),  k_{\alpha 2}a \rangle_{\mathrm{E}} + \langle k_{\alpha 1}z , k_{\alpha 2}z \rangle_{\mathrm{Z}(a)} \\
&= \langle \psi(u_1), \psi(u_2) \rangle_{\mathrm{G}' \oright_{\mathrm{E}}  \mathrm{Z}(a)},\end{align*}
where the second equality follows from Equation \eqref{eq:raealpha}. Hence $\psi$ is an isomorphism of pseudolattices. This completes the proof.\end{proof}

\begin{remark} \label{rem:contraction} In the setting of Example \ref{ex:qdp}, proceeding from $\mathrm{G}$ to $\mathrm{G}'$ should be thought of as ``contracting an exceptional curve''. Since performing such a contraction on a quasi del Pezzo surface produces another quasi del Pezzo surface, Theorem \ref{thm:contraction} should not be too surprising.
\end{remark}

By repeated application of Theorem \ref{thm:contraction}, we reduce the problem of classifying quasi del Pezzo homomorphisms $f\colon \mathrm{G} \to \mathrm{E}$ to the cases  where $\rank(\mathrm{G}) = 3$ or $4$. These are dealt with in the next two propositions.


\begin{proposition} \label{prop:3classification} Let $f\colon \mathrm{G} \to \mathrm{E}$ be a quasi del Pezzo homomorphism with $\rank(\mathrm{G}) = 3$. Choose a basis $(a,b)$ for $\mathrm{E}$ and an exceptional basis $e_{\bullet}$ for $\mathrm{G}$ so that Definition \ref{def:qdp} holds. Then after performing a series of mutations on $e_{\bullet}$, the Gram matrix for $\langle \cdot , \cdot \rangle_{\mathrm{G}}$ becomes
\[\chi := \begin{pmatrix} 1 & 3 & 6 \\ 0 & 1 & 3 \\ 0 & 0 & 1 \end{pmatrix}.\]
Moreover, after a change of basis in $\mathrm{E}$ of the form $\begin{psmallmatrix} 1 & k \\ 0 & 1 \end{psmallmatrix}$, for some $k \in \bZ$, the homomorphisms $f$ and $r$ are given by the matrices
\[F := \begin{pmatrix} 0 & 3 & 6 \\ 1 & 1 & 1 \end{pmatrix} \quad \text{and} \quad R := \begin{pmatrix} 1 & -9 \\ -2 & 15 \\ 1 & -6 \end{pmatrix}.\]
\end{proposition}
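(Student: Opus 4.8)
The plan is to reduce the statement to a concrete factorisation problem in $\mathrm{SL}(2,\bZ)$ and then solve it explicitly. First I would mutate $e_\bullet$ to a norm minimal exceptional basis; by Lemma~\ref{lem:mutation} this preserves Definition~\ref{def:qdp}(3), and since $\rank(\mathrm{G})=3$ Theorem~\ref{thm:kuz} guarantees that every $e_i$ then has nonzero rank. The key structural observation is that, by Lemma~\ref{lem:useful} (with $n=0$), the Gram matrix of $e_\bullet$ agrees entry-by-entry with that of the glued pseudolattice $\mathrm{Z}(v_1,v_2,v_3)$ of Example~\ref{ex:directed}, where $v_i:=f(e_i)$: indeed $\langle e_i,e_j\rangle_{\mathrm{G}}=\langle v_i,v_j\rangle_{\mathrm{E}}$ for $i<j$ and $\langle e_i,e_j\rangle_{\mathrm{G}}=0$ for $i>j$. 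Hence $e_i\mapsto z_i$ is an isomorphism $\mathrm{G}\cong\mathrm{Z}(v_1,v_2,v_3)$ intertwining $f$ with $\zeta$, so $T_f=T_{v_1}T_{v_2}T_{v_3}$ where $T_v(w)=w-\langle v,w\rangle_{\mathrm{E}}v$. Combining this with Proposition~\ref{prop:matrix} gives the identity $T_{v_1}T_{v_2}T_{v_3}=\begin{psmallmatrix}1&-d\\0&1\end{psmallmatrix}$ with $d=q(K_{\mathrm{G}},K_{\mathrm{G}})$, which is the factorisation problem I want to solve.

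Writing $v_i=p_ia+q_ib$, condition~(3) says each $v_i$ is primitive, and by definition of rank we have $q_i=\rank(e_i)\neq 0$. The heart of the argument is to show that norm minimality forces $|q_i|=1$ for all $i$. I would argue this by descent: a mutation acts on the triple of ranks $(n_1,n_2,n_3)$ by the Euclidean-type rule $(n_i,n_{i+1})\mapsto(n_{i+1}-\langle v_i,v_{i+1}\rangle_{\mathrm{E}}\,n_i,\;n_i)$ together with its inverse, so if some $|q_i|\geq 2$ one can exhibit a mutation strictly decreasing $\sum_i q_i^2$, contradicting minimality. This rank reduction is the step I expect to be the \emph{main obstacle}: the ``step size'' $\langle v_i,v_{i+1}\rangle_{\mathrm{E}}$ couples the ranks to the full vectors rather than depending on the ranks alone, so making the descent rigorous will require tracking the primitive vectors $v_i$ in $\bZ^2$ (via the $\mathrm{SL}(2,\bZ)$-action of Lemma~\ref{lem:mutation}), not merely their second coordinates. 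After replacing some $e_i$ by $-e_i$ I may then assume $q_1=q_2=q_3=1$.

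With all ranks equal to $1$ the Gram entries are $x:=\langle e_1,e_2\rangle_{\mathrm{G}}=p_2-p_1$, $z:=\langle e_2,e_3\rangle_{\mathrm{G}}=p_3-p_2$ and $y:=\langle e_1,e_3\rangle_{\mathrm{G}}=p_3-p_1$; these, together with $d$, are invariant under the shear $b\mapsto b+ka$ (which sends $p_i\mapsto p_i-q_ik=p_i-k$). I may therefore compute in the normalised gauge $p_1=0$, where $v_1=b$ and $T_{v_1}=\begin{psmallmatrix}1&0\\-1&1\end{psmallmatrix}$. Expanding the identity $T_{v_1}T_{v_2}T_{v_3}=\begin{psmallmatrix}1&-d\\0&1\end{psmallmatrix}$, the vanishing of the lower-left entry forces $p_3-p_2=3$ and the upper-left entry being $1$ forces $p_2=3$; hence $p_3=6$, the upper-right entry gives $d=9$, and $(x,y,z)=(3,6,3)$. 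Since $x,y,z$ are shear invariant, the Gram matrix of the mutated basis is already exactly $\chi$, independent of the choice of basis of $\mathrm{E}$.

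It remains to record $F$ and $R$ in the normalised gauge. There $f(e_1)=b$, $f(e_2)=3a+b$, $f(e_3)=6a+b$, which is precisely the matrix $F$, and the shear used to reach this gauge is the change of basis $b\mapsto b+ka$ with $k=p_1$, i.e.\ $\begin{psmallmatrix}1&k\\0&1\end{psmallmatrix}$, as in the statement. The matrix $R$ of the right adjoint is then read off from the adjunction relation $\langle e_i,r(v)\rangle_{\mathrm{G}}=\langle f(e_i),v\rangle_{\mathrm{E}}$, equivalently $R=\chi^{-1}F^{T}J_{\mathrm{E}}$ with $J_{\mathrm{E}}$ the Gram matrix of $\mathrm{E}$, and a direct multiplication yields the stated $R$. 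As a byproduct $d=9$, so $\delta(\mathrm{G})=0$ by Remark~\ref{rem:defect}, and the only residual freedom left after mutation is exactly this shear, matching the statement.
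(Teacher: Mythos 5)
There is a genuine gap at the heart of your argument, and it sits exactly where you flag it. First, a smaller error: your claim that ``since $\rank(\mathrm{G})=3$, Theorem~\ref{thm:kuz} guarantees that every $e_i$ has nonzero rank'' reads Theorem~\ref{thm:kuz} backwards. That theorem says that a norm minimal basis \emph{with} all ranks nonzero forces $\rank(\mathrm{G})\in\{3,4\}$; it does not say that a norm minimal basis of a rank-$3$ pseudolattice has all ranks nonzero. The fact you need is true, but for a different reason: a rank-zero exceptional element $e$ lies in $\mathbf{p}^{\perp}$ and gives a class with $q([e],[e])=-\langle e,e\rangle_{\mathrm{G}}=-1$ in $\NS(\mathrm{G})$, which is impossible because condition (4) of Definition~\ref{def:qdp} makes $\NS(\mathrm{G})$ a rank-one positive definite lattice (and $[e]=0$ would force $e\in\bZ\mathbf{p}$, hence $\langle e,e\rangle_{\mathrm{G}}=0\neq 1$). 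Second, and decisively: the step that norm minimality forces $|q_i|=1$ for all $i$ is never proved, and it is precisely the hard core of the proposition. As you yourself observe, the mutation rule on ranks involves $\langle v_i,v_{i+1}\rangle_{\mathrm{E}}=q_ip_{i+1}-p_iq_{i+1}$, which cannot be controlled by the ranks alone, so there is no self-contained Vieta-style descent on $(q_1,q_2,q_3)$; making it rigorous amounts to redoing a Markov/Moishezon-type descent on the primitive vectors from scratch. The paper sidesteps this entirely with three citations: Kuznetsov's result that the Serre operator of a surface-like pseudolattice is unipotent, the Bondal--Polishchuk result that the Gram entries $(x,y,z)$ of any exceptional basis of such a pseudolattice solve the Markov equation $x^2+y^2+z^2=xyz$, and Markov's theorem that all solutions are mutation-equivalent, after which one simply exhibits the solution $(3,6,3)$. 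Until you supply a complete replacement for these inputs, what you have is a strategy, not a proof.

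For what it is worth, everything after that point in your proposal is correct and is a genuinely different way to finish than the paper's: identifying $\mathrm{G}\cong\mathrm{Z}(v_1,v_2,v_3)$ via Lemma~\ref{lem:useful}, combining Proposition~\ref{prop:matrix} with $T_f=T_{\zeta_1}T_{\zeta_2}T_{\zeta_3}$ to get an $\mathrm{SL}(2,\bZ)$ factorization problem, gauge-fixing $p_1=0$ by the shear, and solving the matrix identity does force $p_2=3$, $p_3=6$, $d=9$, hence the stated $\chi$ and $F$; and your formula $R=\chi^{-1}F^{T}J_{\mathrm{E}}$ does reproduce the stated $R$. The paper instead computes $F$ and $R$ from the point-like vector $(1,-2,1)$ (via Kuznetsov's characterization of point-like elements) together with the identity $S_{\mathrm{G}}=\chi^{-1}\chi^{T}=\mathrm{id}_{\mathrm{G}}-rf$; both routes are fine, but yours only becomes a proof once the reduction to ranks $(\pm1,\pm1,\pm1)$ is actually established.
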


\begin{proof}
We begin by noting that, since $\mathrm{G}$ is a surface-like pseudolattice, a result of Kuznetsov \cite[Corollary 3.15]{ecslc} shows that the Serre automorphism $S_{\mathrm{G}}$ is unipotent. We may thus apply a result of Bondal and Polishchuk \cite[Example 3.2]{hpaamh} (see also \cite[Lemma 3.1.2]{ameecl4}) to see that if $e_{\bullet}$ is an exceptional basis of $\mathrm{G}$ (which we may assume satisfies Definition \ref{def:qdp}), then the Gram matrix for $\langle \cdot , \cdot \rangle_{\mathrm{G}}$ is
\[\begin{pmatrix} 1 & x & y \\ 0 & 1 & z \\ 0 & 0 & 1 \end{pmatrix},\]
where $x,y,z \in \bZ$ solve the classical Markov equation
\[x^2 + y^2 + z^2 - xyz = 0.\]
Markov \cite{fqbi} showed that, given any solution of this equation, all other solutions may be obtained by a process of mutation, which corresponds precisely to mutation of exceptional bases. It thus suffices to find any solution to the Markov equation; we choose $(x,y,z) = (3,6,3)$.

To compute the homomorphisms $f$ and $r$ we note first that, using the characterization of point-like vectors in \cite[Lemma 3.3]{ecslc}, the point-like vector $\bp \in \mathrm{G}$ may be written (after possibly multiplying our exceptional basis by $(-1)$) as $(1, -2, 1)$. Since $\bp = r(a)$ by definition, this gives the first column of $R$, and the right adjoint condition gives the second row of $F$. Now, using the fact that the Serre operator $S_{\mathrm{G}}$ has Gram matrix $\chi^{-1} \chi^T$, and the relative $(-1)^0$-CY condition, which gives $S_{\mathrm{G}} = \mathrm{id}_{\mathrm{G}} - rf$, we obtain that
\[F = \begin{pmatrix} k & k+3 & k+6 \\ 1 & 1 & 1 \end{pmatrix}, \quad R = \begin{pmatrix} 1 & -k-9 \\ -2 & 2k + 15 \\ 1 & -k - 6\end{pmatrix},\]
for some $k \in \bZ$, and it is easy to check that the corresponding maps $f$ and $r$ satisfy Definition \ref{def:qdp}. To conclude the proof, it suffices to note that the matrices above may be converted into the required form by conjugating by $\begin{psmallmatrix} 1 & k \\ 0 & 1 \end{psmallmatrix}$. 
\end{proof}

We thus see that, up to isomorphism, there is only one quasi del Pezzo homomorphism with $\rank(\mathrm{G}) = 3$. We henceforth denote it by $f_3\colon \mathrm{G}_3 \to \mathrm{E}$. A straightforward computation shows that $f_3\colon \mathrm{G}_3 \to \mathrm{E}$, with its exceptional basis $(e_1,e_2,e_3)$ as in Proposition \ref{prop:3classification}, may be identified with $\zeta\colon \mathrm{Z}(b,3a+b,6a+b) \to \mathrm{E}$ with its standard exceptional basis $(z_1,z_2,z_3)$, as in Example \ref{ex:directed}.

\begin{remark} \label{rem:3class} It is easy to see that $\mathrm{G}_3$ with its exceptional basis $(e_1,e_2,e_3)$ is isomorphic to $\mathrm{K}_0^{\mathrm{num}}(\mathbf{D}(\bP^2))$ with the standard exceptional basis $(\calO, \calO(1), \calO(2))$, as one should expect from Example \ref{ex:qdp}. In this basis the spherical homomorphism $i^*$ and its adjoint $i_*$ have matrices $F$ and $R$ respectively. 
\end{remark}

\begin{proposition} \label{prop:4classification} Let $f\colon \mathrm{G} \to \mathrm{E}$ be a quasi del Pezzo homomorphism with $\rank(\mathrm{G}) = 4$. Choose a basis $(a,b)$ for $\mathrm{E}$ and an exceptional basis $e_{\bullet}$ for $\mathrm{G}$ so that Definition \ref{def:qdp} holds. Then either $\mathrm{G} \cong \mathrm{G}_3 \oright_{\mathrm{E}} \mathrm{Z}(a)$, as in Theorem \ref{thm:contraction}, or, after performing a series of mutations on $e_{\bullet}$, the Gram matrix for $\langle \cdot , \cdot \rangle_{\mathrm{G}}$ becomes
\[\chi := \begin{pmatrix} 1 & 2 & 2 & 4 \\ 0 & 1 & 0 & 2 \\ 0 & 0 & 1 & 2 \\ 0 & 0 & 0 & 1 \end{pmatrix},\]
and these two cases are distinct. Moreover, in the second case, after a change of basis in $\mathrm{E}$ of the form $\begin{psmallmatrix} 1 & k \\ 0 & 1 \end{psmallmatrix}$, for some $k \in \bZ$, the homomorphisms $f$ and $r$ are given by the matrices
\[F := \begin{pmatrix} 0 & 2 & 2 & 4 \\ 1 & 1 & 1 & 1 \end{pmatrix} \quad \text{and} \quad R := \begin{pmatrix} 1 & -8 \\ -1 & 6 \\ -1 & 6 \\ 1 & -4 \end{pmatrix}.\]
\end{proposition}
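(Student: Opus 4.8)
The plan is to follow the template of the proof of Proposition~\ref{prop:3classification}, the new feature being that the rank-$4$ case splits into exactly two mutation classes rather than one. First I would use Lemma~\ref{lem:mutation} to mutate $e_\bullet$ to a norm-minimal exceptional basis still satisfying Definition~\ref{def:qdp}; since $\NS(\mathrm{G})$ has signature $(1,1)$, Theorem~\ref{thm:kuz} is available. As $\mathrm{G}$ is unimodular with $\rank\NS(\mathrm{G}) = \rank(\mathrm{G})-2 = 2$, the lattice $\NS(\mathrm{G})$ is unimodular of signature $(1,1)$, hence isomorphic either to the odd lattice $I_{1,1} = \langle 1\rangle\oplus\langle -1\rangle$ or to the even hyperbolic lattice $U$; this parity is the organizing invariant. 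Note that $\mathrm{G}$ contains an exceptional element of rank zero if and only if $\NS(\mathrm{G})$ represents $-1$, i.e.\ if and only if $\NS(\mathrm{G}) \cong I_{1,1}$: a rank-zero exceptional $e$ lies in $\mathbf{p}^\perp$ and gives $q([e],[e]) = -\langle e,e\rangle_\mathrm{G} = -1$, and conversely any lift of a norm $-1$ class is a rank-zero exceptional element. To classify the Gram matrices I would invoke unipotency of $S_\mathrm{G}$ (\cite[Corollary 3.15]{ecslc}) and the structural description of Gram matrices of exceptional bases of such pseudolattices (\cite[Example 3.2]{hpaamh}, \cite[Lemma 3.1.2]{ameecl4}), which reduces the Gram matrix to an integer solution of the rank-$4$ analogue of the Markov equation; the claim to establish is that these solutions fall into exactly two mutation orbits, distinguished by the parity of $\NS(\mathrm{G})$.

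Suppose first $\NS(\mathrm{G}) \cong I_{1,1}$. Then the relevant mutation orbit has a representative whose norm-minimal exceptional basis contains a rank-zero element $e_\alpha$, which by Lemma~\ref{lem:mmp} satisfies $f(e_\alpha) = \pm a$. I would then re-run the construction in the proof of Theorem~\ref{thm:contraction}: that argument only uses the existence of such a rank-zero element in a norm-minimal basis satisfying Definition~\ref{def:qdp} (the hypothesis $\rank(\mathrm{G}) > 4$ enters solely to produce one via Theorem~\ref{thm:kuz}). Steps 1--3 of that proof then apply verbatim and yield a rank-$3$ quasi del Pezzo homomorphism $f'\colon \mathrm{G}'\to\mathrm{E}$ together with an isomorphism $\mathrm{G} \cong \mathrm{G}' \oright_{\mathrm{E}}\mathrm{Z}(a)$ commuting with the maps to $\mathrm{E}$. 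By Proposition~\ref{prop:3classification}, $\mathrm{G}' \cong \mathrm{G}_3$, so $\mathrm{G} \cong \mathrm{G}_3\oright_{\mathrm{E}}\mathrm{Z}(a)$, the first alternative.

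Now suppose $\NS(\mathrm{G}) \cong U$, so that $\mathrm{G}$ has no rank-zero exceptional element and hence every exceptional basis has all ranks nonzero. Here the no-rank-zero solutions of the Markov-type system form the second orbit, with representative $\chi$; equivalently one may use that $T_f = T_{f(e_1)}\cdots T_{f(e_4)}$ (Example~\ref{ex:directed} and Proposition~\ref{prop:glueCY}(b)) equals $\begin{psmallmatrix}1 & -d\\ 0 & 1\end{psmallmatrix}$ with $d = q(K_\mathrm{G},K_\mathrm{G})$ (Proposition~\ref{prop:matrix}), so that primitivity of each $f(e_i)$ together with norm-minimality pins down the images as $(0,1),(2,1),(2,1),(4,1)$ after a basis change $\begin{psmallmatrix}1 & k\\0&1\end{psmallmatrix}$, giving $d = 8$ and the Gram matrix $\chi$. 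Having reduced to this Gram matrix, I would pin down $F$ and $R$ exactly as in Proposition~\ref{prop:3classification}: the characterization of point-like vectors \cite[Lemma 3.3]{ecslc} gives $\mathbf{p} = r(a)$, hence the first column of $R$ and (by adjunction) the second row of $F$, and the relative $(-1)^0$-CY identity $S_\mathrm{G} = C_f = \mathrm{id}_\mathrm{G} - rf$ together with $S_\mathrm{G} = \chi^{-1}\chi^T$ determines the remaining entries up to the residual basis change $\begin{psmallmatrix}1 & k\\0&1\end{psmallmatrix}$.

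Finally, the two cases are distinct because $\mathrm{G}_3\oright_{\mathrm{E}}\mathrm{Z}(a)$ contains the rank-zero exceptional element $z$ (with $f(z)=a$, so $\langle z,z\rangle_\mathrm{G}=1$ and $\rank(z)=0$), whence its N\'eron-Severi lattice represents $-1$ and is $\cong I_{1,1}$, whereas a direct computation with $\chi$ (or the identification of $\NS$ with $U$) shows the second pseudolattice has no class of self-intersection $-1$; since $\NS$ is an isomorphism invariant, no isomorphism of quasi del Pezzo homomorphisms can identify the two. The main obstacle is the Diophantine step: verifying that the solutions of the rank-$4$ Markov-type system constitute exactly two mutation orbits, represented by $\chi$ and by the Gram matrix of $\mathrm{G}_3\oright_{\mathrm{E}}\mathrm{Z}(a)$, and separated by the parity of $\NS(\mathrm{G})$.
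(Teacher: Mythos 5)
Your outline reproduces the paper's endgame (computing $F$ and $R$, and distinguishing the two cases), but it defers precisely the step that constitutes the heart of the paper's proof, and the claim you make about that step is false as stated. You assert that the rank-$4$ ``Markov-type system'' has exactly two mutation orbits, separated by the parity of $\NS(\mathrm{G})$, and you flag this as ``the main obstacle'' rather than proving it. The paper closes this gap by invoking the classification theorem of de Thanhoffer de Volcsey and Van den Bergh \cite[Theorem A]{ameecl4} (you cite only their rank-$3$ ingredient \cite[Lemma 3.1.2]{ameecl4}): for a rank-$4$ surface-like pseudolattice with unipotent Serre operator, mutations reduce the Gram matrix either to $\chi$ or to a member of an \emph{infinite} family parametrized by $n \in \bN$, with off-diagonal entries $(n,2n,n;3,3;3)$. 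So at the level of pseudolattices there are infinitely many mutation orbits, not two, and parity of $\NS(\mathrm{G})$ cannot by itself organize the classification. What collapses the infinite family is the quasi del Pezzo hypothesis, specifically primitivity (Definition \ref{def:qdp}(3)): the paper sets $c := f(e_2)$, completes it to a basis $(c,d)$ of $\mathrm{E}$, uses Lemma \ref{lem:useful} to get $\langle f(e_i),f(e_j)\rangle_{\mathrm{E}} = \langle e_i,e_j\rangle_{\mathrm{G}} = 3$ for $2 \leq i < j \leq 4$, deduces $f(e_3) = (x+1)c - 3d$ and $f(e_4) = xc - 3d$ with $x \equiv 1 \pmod 3$, and then computes $f(e_1) = \tfrac{1}{3}(1-x)nc + nd$, which is primitive if and only if $n = 1$; in that case the point-like vector is $(0,1,-1,1)$, so $e_1$ has rank zero and the contraction of Theorem \ref{thm:contraction} identifies $\mathrm{G}$ with $\mathrm{G}_3 \oright_{\mathrm{E}} \mathrm{Z}(a)$. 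None of this argument appears in your proposal, and without it (or the citation of \cite[Theorem A]{ameecl4} plus the primitivity analysis) the proof is incomplete.

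There is also a secondary gap in your odd case. From $\NS(\mathrm{G}) \cong I_{1,1}$ you correctly produce a rank-zero exceptional \emph{element} of $\mathrm{G}$, but to run Steps 1--3 of Theorem \ref{thm:contraction} you need a rank-zero member of an exceptional \emph{basis} that is mutation-equivalent to $e_\bullet$ and satisfies Definition \ref{def:qdp}. Theorem \ref{thm:kuz} is vacuous when $\rank(\mathrm{G}) = 4$ (a norm-minimal basis may consist entirely of nonzero-rank elements, as it does for $\mathrm{G}_4$), so it cannot supply such a basis element, and you give no argument that your element lies in, or can be completed to, an exceptional basis in the right mutation class. By contrast, in the paper the rank-zero basis element $e_1$ is produced by the primitivity computation above, not by any lattice-theoretic argument. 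Your computation of $F$ and $R$ and your distinctness argument via the parity of $\NS(\mathrm{G})$ (the paper instead uses $\coker(f)$, noted after the proposition) are both fine, but they sit downstream of the unproven classification step.
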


\begin{proof}As in the proof of Proposition \ref{prop:3classification} we begin by noting that, since $\mathrm{G}$ is a surface-like pseudolattice, a result of Kuznetsov \cite[Corollary 3.15]{ecslc} shows that the Serre automorphism is unipotent. Thus we may apply a result of de Thanhoffer de Volcsey and Van den Bergh \cite[Theorem A]{ameecl4} to see that, after a series of mutations, we may convert any exceptional basis for $\mathrm{G}$ (which we may assume satisfies Definition \ref{def:qdp}) into one in which the Gram matrix for $\langle \cdot , \cdot \rangle_{\mathrm{G}}$ is either
\[\begin{pmatrix} 1 & 2 & 2 & 4 \\ 0 & 1 & 0 & 2 \\ 0 & 0 & 1 & 2 \\ 0 & 0 & 0 & 1 \end{pmatrix}\quad \text{or} \quad \begin{pmatrix} 1 & n & 2n & n \\ 0 & 1 & 3 & 3 \\ 0 & 0 & 1 & 3 \\ 0 & 0 & 0 & 1 \end{pmatrix},\]
for some $n \in \bN$, and these cases are distinct. 

In the first case, we use the same method as in the proof of Proposition \ref{prop:3classification} to compute $F$ and $R$. The point-like vector is given by $\bp = (1,-1,-1,1)$ and, omitting the details, we obtain
\[F := \begin{pmatrix} k & k+2 & k+2 & k+4 \\ 1 & 1 & 1 & 1 \end{pmatrix}, \quad R := \begin{pmatrix} 1 & -k-8 \\ -1 & k+6 \\ -1 & k+6 \\ 1 & -k-4 \end{pmatrix},\]
for some $k \in \bZ$. It is easy to check that the corresponding maps $f$ and $r$ satisfy Definition \ref{def:qdp} and that the matrices above may be converted into the required form by conjugating by $\begin{psmallmatrix} 1 & k \\ 0 & 1 \end{psmallmatrix}$. 

Now suppose that we are in the second case. Choose an exceptional basis $e_{\bullet} := (e_1,e_2,e_3,e_4)$, satisfying Definition \ref{def:qdp}, for which the Gram matrix for $\langle \cdot , \cdot \rangle_{\mathrm{G}}$ is of the second type, above. Define $c := f(e_2) \in \mathrm{E}$; note that $c$ is primitive by assumption. Write $c = pa + qb$ for some $p,q \in \bZ$. As $c$ is primitive, we must have $\mathrm{gcd}(p,q) = 1$, so there exist $p',q' \in \bZ$ with $p'p+q'q = 1$. Define $d = -q'a + p'b \in \mathrm{E}$. Then we have 
\[\langle c,d \rangle_\mathrm{E} = -1, \quad\quad \langle d,c \rangle_\mathrm{E} = 1, \quad\quad \langle c,c \rangle_\mathrm{E} = \langle d,d \rangle_\mathrm{E} = 0,\]
and $(c,d)$ is an alternative basis for $\mathrm{E}$.

By Lemma \ref{lem:useful}, we see that whenever $i < j$ we have $\langle f(e_i),f(e_j) \rangle_{\mathrm{E}} =  \langle e_i, e_j \rangle_{\mathrm{G}}$. Therefore $\langle f(e_2),f(e_3) \rangle_{\mathrm{E}} = \langle f(e_2),f(e_4) \rangle_{\mathrm{E}} = \langle f(e_3),f(e_4) \rangle_{\mathrm{E}} = 3$, so we may compute that $f(e_3) = (x+1)c - 3d$ and $f(e_4) = xc - 3d$, for some $x \in \bZ$. By primitivity of $f(e_3)$ and $f(e_4)$, we thus obtain that $x \equiv 1 \pmod 3$. Finally, by computing $\langle f(e_1), f(e_i)\rangle_{\mathrm{E}}$ for $i \in \{2,3,4\}$, we obtain
\[f(e_1) = \tfrac{1}{3}(1-x)n c + n d.\]
This is primitive if and only if $n =1$, so we find that the Gram matrix in this basis is
\[\begin{pmatrix} 1 & 1 & 2 & 1 \\ 0 & 1 & 3 & 3 \\ 0 & 0 & 1 & 3 \\ 0 & 0 & 0 & 1 \end{pmatrix}.\]

To complete the proof note that, using the characterization of point-like vectors in \cite[Lemma 3.3]{ecslc}, the point-like vector $\bp \in \mathrm{G}$ is given in the basis $e_{\bullet}$ by $(0,1,-1,1)$. We thus have $\rank(e_1) = 0$. Following the proof of Theorem \ref{thm:contraction}, we therefore see that $\mathrm{G} \cong \mathrm{G}_3 \oright_{\mathrm{E}} \mathrm{Z}(a)$ in this case.
\end{proof}

We thus find that, up to isomorphism, there are two quasi del Pezzo homomorphisms with $\rank(\mathrm{G}) = 4$. One of these is given by $f_3\oright\zeta\colon \mathrm{G}_3 \oright_{\mathrm{E}} \mathrm{Z}(a) \to \mathrm{E}$; we henceforth denote the other by $f_4\colon \mathrm{G}_4 \to \mathrm{E}$. They are easily distinguished by observing that $f_3\oright\zeta$ is surjective, but $f_4$ has cokernel $\bZ/2\bZ$.

As in the rank 3 case, a straightforward computation shows that $f_4\colon \mathrm{G}_4 \to \mathrm{E}$, with its exceptional basis $(e_1,e_2,e_3,e_4)$ as in Proposition \ref{prop:4classification}, may be identified with $\zeta\colon \mathrm{Z}(b,2a+b,2a+b,4a+b) \to \mathrm{E}$ with its standard exceptional basis $(z_1,z_2,z_3,z_4)$, as in Example \ref{ex:directed}.

\begin{remark} \label{rem:4class} It is  easy to see that $\mathrm{G}_4$ with its exceptional basis $(e_1,e_2,e_3,e_4)$ is isomorphic to  $\mathrm{K}_0^{\mathrm{num}}(\mathbf{D}(\bP^1 \times \bP^1))$ with exceptional basis $(\calO, \calO(1,0),\calO(0,1), \calO(1,1))$, as one might expect from Example \ref{ex:qdp}. In this basis the spherical homomorphism $i^*$ and its adjoint $i_*$ have matrices $F$ and $R$ respectively.
\end{remark}

Putting these results together, we see that we have at most two quasi del Pezzo homomorphisms for each $\rank(\mathrm{G}) > 3$. To complete the classification, we should check whether these ever coincide. To do this, we use the identifications of $\mathrm{G}_3$ and $\mathrm{G}_4$ with the pseudolattices $\mathrm{Z}(v_1,\ldots,v_n)$. 

\begin{proposition} Choose a basis $(a,b)$ for $\mathrm{E}$. Let $\zeta \colon \mathrm{Z}(b,3a+b,6a+b,a,a) \to \mathrm{E}$ and $\zeta'\colon\mathrm{Z}(b,2a+b,2a+b,4a+b,a) \to \mathrm{E}$ be defined as in Example \ref{ex:directed} and let $z_{\bullet}$, $z_{\bullet}'$ denote the standard exceptional bases for $\mathrm{Z}(b,3a+b,6a+b,a,a)$ and $\mathrm{Z}(b,2a+b,2a+b,4a+b,a)$ respectively. Then, after performing a series of mutations on the exceptional basis $z_{\bullet}$, we have an isomorphism of pseudolattices $\psi$ that fits into the following diagram
\[\xymatrix{\mathrm{Z}(b,3a+b,6a+b,a,a) \ar[rr]^-{\psi} \ar[rd]^{\zeta} & & \mathrm{Z}(b,2a+b,2a+b,4a+b,a)\ar[ld]_{\zeta'} \\
& \mathrm{E} &
}\]
such that $\psi(z_{\bullet}) = z_{\bullet}'$.
\end{proposition}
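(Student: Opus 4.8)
The plan is to reduce the statement to a purely combinatorial computation with the images of the basis elements in $\mathrm{E}$. The starting point is the observation that, for a relative $(-1)^0$-CY spherical homomorphism $g\colon \mathrm{G} \to \mathrm{E}$, the Gram matrix of \emph{any} exceptional basis of $\mathrm{G}$ is completely determined by the images of its elements under $g$. Indeed, if $(w_1,\dots,w_m)$ is an exceptional basis then $\langle w_i, w_i \rangle_{\mathrm{G}} = 1$ and $\langle w_j, w_i \rangle_{\mathrm{G}} = 0$ for $i < j$, so Lemma \ref{lem:useful} (with $n=0$) gives $\langle w_i, w_j \rangle_{\mathrm{G}} = \langle g(w_i), g(w_j) \rangle_{\mathrm{E}}$ whenever $i < j$. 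Since $\zeta$ and $\zeta'$ are both of the relative $(-1)^0$-CY type constructed in Example \ref{ex:directed}, this applies on both sides. Consequently it suffices to find a sequence of mutations taking the ordered tuple of images $(\zeta(z_1),\dots,\zeta(z_5)) = (b, 3a+b, 6a+b, a, a)$ to the tuple $(\zeta'(z_1'),\dots,\zeta'(z_5')) = (b, 2a+b, 2a+b, 4a+b, a)$: if $\tilde z_\bullet$ denotes the resulting mutated basis of $\mathrm{Z}(b,3a+b,6a+b,a,a)$, then the map $\psi$ sending $\tilde z_i$ to $z_i'$ automatically preserves the bilinear form (both Gram matrices being determined identically by the common images) and satisfies $\zeta'\psi(\tilde z_i) = \zeta'(z_i') = \zeta(\tilde z_i)$, so that $\psi$ is an isomorphism of pseudolattices commuting with the projections to $\mathrm{E}$.

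Next I would record how mutations act on the tuple of images. As in the proof of Lemma \ref{lem:mutation}, a left mutation $\bL_{i,i+1}$ replaces the pair $(\zeta(z_i), \zeta(z_{i+1}))$ by $(\zeta(z_{i+1}) - \langle \zeta(z_i), \zeta(z_{i+1}) \rangle_{\mathrm{E}}\,\zeta(z_i),\ \zeta(z_i))$, leaving the other entries fixed, and $\bR_{i,i+1}$ acts by the inverse rule; each acts on the images through $\mathrm{SL}(2,\bZ)$ and so preserves primitivity. Writing each image as a column vector $pa + qb$ and using the identity $\langle pa+qb, p'a+q'b \rangle_{\mathrm{E}} = qp' - pq'$, these become completely explicit integral operations, and the whole problem reduces to a finite search.

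The main work, and the step I expect to be the genuine obstacle, is exhibiting an explicit sequence realising this transformation. A direct search produces one, for instance the sequence
\[
\bR_{3,4},\ \bR_{4,5},\ \bR_{2,3},\ \bR_{2,3},\ \bR_{3,4},\ \bR_{3,4},\ \bL_{4,5},\ \bL_{4,5}
\]
applied in this order; tracking the images through these eight steps carries $(b, 3a+b, 6a+b, a, a)$ through the intermediate tuples down to $(b, 2a+b, 2a+b, 4a+b, a)$. Some care is required here: naive orderings tend to terminate at a tuple agreeing with the target only up to a sign (for example producing $-a$ rather than $a$ in the final slot), which is not permissible since a sign change of a basis vector is not a mutation. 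The two closing left mutations above are chosen precisely to resolve the last two entries into $(4a+b, a)$ with the correct signs. Once the sequence is verified by the direct computation indicated above, the conclusion is immediate from the reduction of the first paragraph: defining $\psi(\tilde z_i) = z_i'$ yields the desired isomorphism fitting into the stated commutative triangle, with $\psi(\tilde z_\bullet) = z_\bullet'$.
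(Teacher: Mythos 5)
Your proposal is correct and follows essentially the same route as the paper's proof: both reduce the statement to matching the tuples of images in $\mathrm{E}$ (your appeal to Lemma \ref{lem:useful} just spells out why matching images suffices, which the paper compresses into the phrase ``converts this into a linear algebra problem'') and then exhibit an explicit sequence of mutations. I checked your eight-step sequence $\bR_{3,4},\ \bR_{4,5},\ \bR_{2,3},\ \bR_{2,3},\ \bR_{3,4},\ \bR_{3,4},\ \bL_{4,5},\ \bL_{4,5}$ against the paper's mutation conventions and it does carry $(b,\,3a+b,\,6a+b,\,a,\,a)$ exactly (including signs) to $(b,\,2a+b,\,2a+b,\,4a+b,\,a)$, so it is a valid witness, differing from the paper's five-step sequence only in the choice of mutations.
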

\begin{proof} Writing the maps $\zeta \colon \mathrm{Z}(b,3a+b,6a+b,a,a) \to \mathrm{E}$ and $\zeta'\colon\mathrm{Z}(b,2a+b,2a+b,4a+b,a) \to \mathrm{E}$ explicitly in terms as matrices, as was done in Propositions \ref{prop:3classification} and \ref{prop:4classification}, converts this into a linear algebra problem: it suffices to find a sequence of mutations taking the matrix representation of $\zeta \colon \mathrm{Z}(b,3a+b,6a+b,a,a) \to \mathrm{E}$ in the exceptional basis $z_{\bullet}$ to the matrix representation of $\zeta'\colon\mathrm{Z}(b,2a+b,2a+b,4a+b,a) \to \mathrm{E}$ in the exceptional basis $z'_{\bullet}$. Such a sequence of mutations is given explicitly by 
\[z'_{\bullet} = \bR_{4,5} \circ \bL_{3,4} \circ \bL_{2,3} \circ \bR_{4,5} \circ \bR_{3,4}(z_{\bullet}).\]
\end{proof}

\begin{remark} One may rephrase this proposition as an isomorphism between $\mathrm{G}_3 \oright_{\mathrm{E}} \mathrm{Z}(a) \oright_{\mathrm{E}} \mathrm{Z}(a) $ and $\mathrm{G}_4 \oright_{\mathrm{E}} \mathrm{Z}(a)$. In light of Remarks \ref{rem:contraction}, \ref{rem:3class} and \ref{rem:4class}, this should be thought of as an analogue of the isomorphism between the blow-up of $\bP^2$ in two points and the blow-up of $\bP^1 \times \bP^1$ in a single point, on the level of numerical Grothendieck groups.
\end{remark}

Consequently, we find that there is precisely one quasi del Pezzo homomorphism, up to isomorphism, with $\rank(\mathrm{G}) = 5$. Putting everything together, we obtain the following theorem, which is the main result of this paper. Roughly speaking, it states that the classification of quasi del Pezzo homomorphisms parallels that of quasi del Pezzo surfaces (see Example \ref{ex:qdp}).

\begin{theorem} \label{thm:main} Let $f\colon \mathrm{G} \to \mathrm{E}$ be a quasi del Pezzo homomorphism and let $n:=\rank(\mathrm{G})$. Choose a basis $(a,b)$ for $\mathrm{E}$ and an exceptional basis $e_{\bullet}$ for $\mathrm{G}$ so that Definition \ref{def:qdp} holds. Then after performing a series of exceptional mutations on $e_{\bullet}$ and an automorphism of $\mathrm{E}$ of the form $\begin{psmallmatrix} 1 & k \\ 0 & 1 \end{psmallmatrix}$ \textup{(}written in the basis $(a,b)$\textup{)}, for $k \in \bZ$, we have that $f\colon \mathrm{G} \to \mathrm{E}$ with its exceptional basis $e_{\bullet}$ is either:
\begin{itemize}
\item $\mathrm{Z}(b,3a+b,6a+b,a,\ldots,a) \to \mathrm{E}$ with its standard exceptional basis as in Example \ref{ex:directed}, for any $n \geq 3$; or
\item $\mathrm{Z}(b,2a+b,2a+b,4a+b) \to \mathrm{E}$ with its standard exceptional basis as in Example \ref{ex:directed}, for $n = 4$.
\end{itemize} 
\end{theorem}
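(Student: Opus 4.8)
The plan is to argue by induction on $n = \rank(\mathrm{G})$, using Propositions \ref{prop:3classification} and \ref{prop:4classification} as base cases and the contraction result of Theorem \ref{thm:contraction} as the inductive step. The base cases are immediate from the two propositions: for $n = 3$ there is a unique quasi del Pezzo homomorphism, which we have identified with $\zeta\colon\mathrm{Z}(b,3a+b,6a+b) \to \mathrm{E}$; and for $n = 4$ there are exactly two, namely $\mathrm{G}_3 \oright_{\mathrm{E}} \mathrm{Z}(a) = \mathrm{Z}(b,3a+b,6a+b,a)$ and $f_4\colon\mathrm{G}_4 \to \mathrm{E}$, which is identified with $\zeta\colon\mathrm{Z}(b,2a+b,2a+b,4a+b) \to \mathrm{E}$. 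These are precisely the two listed families at ranks $3$ and $4$.

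For the inductive step at rank $n \geq 5$, I would apply Theorem \ref{thm:contraction}: after a series of mutations on $e_{\bullet}$ it produces a quasi del Pezzo homomorphism $f'\colon\mathrm{G}' \to \mathrm{E}$ with $\rank(\mathrm{G}') = n-1$, together with an isomorphism $\mathrm{G} \cong \mathrm{G}' \oright_{\mathrm{E}} \mathrm{Z}(a)$ that commutes with the maps to $\mathrm{E}$ and sends the exceptional basis of $\mathrm{G}$ to $(e_{\bullet}',z)$. By the inductive hypothesis, $f'$ is (up to mutation and an automorphism of $\mathrm{E}$) one of the listed homomorphisms. Since $\mathrm{Z}(v_1,\ldots,v_k) \oright_{\mathrm{E}} \mathrm{Z}(a) = \mathrm{Z}(v_1,\ldots,v_k,a)$ by Example \ref{ex:directed}, gluing a further copy of $\mathrm{Z}(a)$ onto the first family merely appends another $a$; thus if $\mathrm{G}'$ lies in the first family, so does $\mathrm{G}$, with its standard exceptional basis.

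The only subtlety occurs at $n = 5$, where $\mathrm{G}'$ has rank $4$ and could a priori be the second-family homomorphism $f_4$. This is exactly the role of the preceding proposition, which exhibits an explicit sequence of mutations realizing an isomorphism $\mathrm{G}_4 \oright_{\mathrm{E}} \mathrm{Z}(a) \cong \mathrm{Z}(b,3a+b,6a+b,a,a)$ compatible with the projections to $\mathrm{E}$ and matching standard exceptional bases. Hence even when $\mathrm{G}' \cong \mathrm{G}_4$, the glued pseudolattice $\mathrm{G}$ collapses into the first family at rank $5$. For $n \geq 6$ the rank of $\mathrm{G}'$ is at least $5$, so by induction $\mathrm{G}'$ already lies in the first family and no collapse is needed; consequently the second family occurs only at rank $4$, exactly as stated.

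I expect the main obstacle to be bookkeeping rather than anything conceptual. One must track not merely the isomorphism type of $\mathrm{G}$ but the full data of $f$ together with its mutated exceptional basis and its compatibility with the chosen basis $(a,b)$ of $\mathrm{E}$ up to the permitted automorphism $\begin{psmallmatrix} 1 & k \\ 0 & 1 \end{psmallmatrix}$; in particular the mutations and base changes accumulated at each stage must be composed into a single such reduction. The commuting triangles supplied by Theorem \ref{thm:contraction} and the preceding proposition, together with the gluing framework of Proposition \ref{prop:glueCY}, are precisely what make this data transport cleanly through the induction, so the crux is assembling them correctly.
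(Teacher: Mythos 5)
Your proposal is correct and takes essentially the same route as the paper: the paper likewise reduces to the rank $3$ and $4$ classifications of Propositions \ref{prop:3classification} and \ref{prop:4classification} by repeated application of Theorem \ref{thm:contraction}, and invokes the explicit rank-$5$ isomorphism $\mathrm{G}_4 \oright_{\mathrm{E}} \mathrm{Z}(a) \cong \mathrm{G}_3 \oright_{\mathrm{E}} \mathrm{Z}(a) \oright_{\mathrm{E}} \mathrm{Z}(a)$ to collapse the second family into the first for all $n \geq 5$. Organizing the repeated contraction as a formal induction on $n$, as you do, is only a cosmetic difference from the paper's ``contract down to rank $3$ or $4$, then rebuild'' phrasing.
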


\begin{corollary}Let $f\colon \mathrm{G} \to \mathrm{E}$ be a quasi del Pezzo homomorphism and let $n:=\rank(\mathrm{G})$. If $(a,b)$ is a basis for $\mathrm{E}$ so that  Definition \ref{def:qdp} holds, then the twist endomorphism $T_f$ is given in the basis $(a,b)$ by 
\[\begin{pmatrix} 1 & n-12 \\ 0 & 1\end{pmatrix}.\]
Consequently, the defect $\delta(\mathrm{G}) = 0$.
\end{corollary}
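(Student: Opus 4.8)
The plan is to reduce the statement to a single numerical identity and then settle it using the classification already in hand. First observe that, by Proposition \ref{prop:matrix}, in \emph{any} basis $(a,b)$ satisfying Definition \ref{def:qdp} the twist has matrix $\begin{psmallmatrix} 1 & -d \\ 0 & 1 \end{psmallmatrix}$, where $d = q(K_{\mathrm{G}}, K_{\mathrm{G}})$. Since the canonical class $K_{\mathrm{G}}$ (with respect to the point-like vector $\mathbf{p}=r(a)$ forced by Definition \ref{def:qdp}) is intrinsic to $\mathrm{G}$, the integer $d$ is an invariant independent of all the choices involved, and the asserted shape of $T_f$ is \emph{equivalent} to the identity $d = 12 - n$. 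Moreover, as $\mathrm{G}$ is unimodular (Remark \ref{rem:equivalentcondition}) the rank function is surjective and $\rank(\NS(\mathrm{G})) = n - 2$; hence by definition $\delta(\mathrm{G}) = q(K_{\mathrm{G}},K_{\mathrm{G}}) + \rank(\NS(\mathrm{G})) - 10 = d + n - 12$. Thus both assertions of the corollary coincide with the single claim $d = 12 - n$ (compare Remark \ref{rem:defect}), and it suffices to prove this.

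To compute $d$, I would invoke Theorem \ref{thm:main}, which identifies $f$ with one of the two standard homomorphisms $\zeta$ of Example \ref{ex:directed}. Since $d$ and $n$ are invariants, it is enough to evaluate the top--right entry of the twist in these standard models, where it factors as a product of symplectic transvections $T_f = T_{\zeta_1}\cdots T_{\zeta_n}$ with $T_{\zeta_i}(w) = w - \langle v_i, w \rangle_{\mathrm{E}} v_i$. Writing $v_i = p_i a + q_i b$, a one-line calculation gives the matrix of $T_{\zeta_i}$ in the basis $(a,b)$ as $\begin{psmallmatrix} 1 - p_iq_i & p_i^2 \\ -q_i^2 & 1 + p_iq_i \end{psmallmatrix}$ --- precisely the $\mathrm{SL}(2,\bZ)$ matrices appearing in Lemma \ref{lem:mutation}. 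Multiplying these for the two standard collections, namely $(b,\,3a+b,\,6a+b,\,a,\ldots,a)$ with $n-3$ trailing copies of $a$, and $(b,\,2a+b,\,2a+b,\,4a+b)$, yields $\begin{psmallmatrix} 1 & n-12 \\ 0 & 1 \end{psmallmatrix}$ in both cases. Comparing with $\begin{psmallmatrix} 1 & -d \\ 0 & 1 \end{psmallmatrix}$ then gives $-d = n-12$, as desired.

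The one point requiring care --- and the main (though modest) obstacle --- is verifying that passing to the standard model via Theorem \ref{thm:main} genuinely preserves the quantity being computed. Mutations leave $\mathrm{G}$ unchanged as a pseudolattice, so they affect neither $K_{\mathrm{G}}$ nor $d$; and an isomorphism of quasi del Pezzo homomorphisms carries the point-like vector $r_1(a_1)$ to $r_2(a_2)$, hence preserves the N\'eron--Severi lattice and the canonical class, so that $q(K_{\mathrm{G}},K_{\mathrm{G}})$ agrees with its value in the standard model. (Equivalently, under $\varphi f = \zeta \psi$ one checks $T_\zeta = \varphi\,T_f\,\varphi^{-1}$, which leaves the matrix unchanged since $T_f$ and the automorphism $\varphi = \begin{psmallmatrix} 1 & k \\ 0 & 1 \end{psmallmatrix}$ are both unipotent upper-triangular and therefore commute.) This bookkeeping together with the two matrix products constitutes the entire argument. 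Finally, substituting $d = 12 - n$ into $\delta(\mathrm{G}) = d + n - 12$ yields $\delta(\mathrm{G}) = 0$, completing the proof.
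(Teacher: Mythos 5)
Your proposal is correct and takes essentially the same route as the paper's proof: both reduce via Theorem \ref{thm:main} to the two standard models, compute the twist there as a product of the transvection matrices $M_{p_i,q_i}$ (the paper packages this product via Proposition \ref{prop:glueCY} and Example \ref{ex:sphob}), note that conjugation by $\begin{psmallmatrix}1 & k \\ 0 & 1\end{psmallmatrix}$ leaves such unipotent upper-triangular matrices unchanged, and obtain the defect statement from Proposition \ref{prop:matrix} together with Remark \ref{rem:defect}. Your preliminary reformulation of both claims as the single identity $q(K_{\mathrm{G}},K_{\mathrm{G}}) = 12 - n$ is a harmless reorganization of the same content, with the only load-bearing invariance argument being exactly the conjugation/mutation bookkeeping the paper also uses.
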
 
\begin{proof} This is a fairly simple consequence of Theorem \ref{thm:main}. It is easy to compute explicitly that the twist endomorphism has the required form for the special quasi del Pezzo homomorphisms $f_3\colon \mathrm{G}_3 \to \mathrm{E}$ and $f_4\colon \mathrm{G}_4 \to \mathrm{E}$. Moreover, by Proposition \ref{prop:glueCY}, the twist associated to $f_3\oright \zeta \colon \mathrm{G}_3 \oright_{\mathrm{E}} \mathrm{Z}(a,\ldots,a) \to \mathrm{E}$ is the product $T_{f_3} \cdot T_{\zeta}^{n-3}$,  where $T_{\zeta}$ is the twist associated to $\mathrm{Z}(a) \to \mathrm{E}$. But this latter twist is just the twist $\begin{psmallmatrix}1 & 1 \\ 0 & 1\end{psmallmatrix}$, by Example \ref{ex:sphob}. To complete the argument, note that twists of this form are not affected by conjugation by matrices of the form $\begin{psmallmatrix} 1 & k \\ 0 & 1 \end{psmallmatrix}$, so they are unchanged by the automorphisms of $\mathrm{E}$ from Theorem \ref{thm:main}. Finally, the statement about the defect is an immediate consequence of Proposition \ref{prop:matrix} and Remark \ref{rem:defect}.
\end{proof}

\section{Genus $1$ Fibrations over Discs}\label{sec:fibrations}

The aim of this section is to apply the results above to the problem of classifying a certain class of genus $1$ fibrations over discs.

We begin with some general setup that will be used throughout this section. Let $\Delta := \{x \in \bC : |x| \leq 1\}$ denote the closed complex unit disc and let $\pi\colon Y \to \Delta$ denote a surjective map from an oriented, smooth, compact $4$-manifold $Y$ to $\Delta$, whose smooth fibres are $2$-tori. Let $\Sigma := \{x_1,\ldots,x_n\} \subset \Delta$ denote the set of critical values of $\pi$, which we assume are all distinct. Assume further that $\Sigma$ is contained in the interior $\Delta^{\circ}$ of $\Delta$ and that all critical points are modelled on the complex Morse singularity $(z_1,z_2) \mapsto z_1^2 + z_2^2$ in an orientation-preserving coordinate chart.  We call such $\pi\colon Y \to \Delta$ (sometimes abbreviated simply to $Y$, when no confusion is likely to result) satisfying these assumptions a \emph{genus $1$ Lefschetz fibration}.

\subsection{Genus $1$ Lefschetz fibrations} \label{sec:ellipticlf} 

We begin with some generalities on genus $1$ Lefschetz fibrations  $\pi\colon Y \to \Delta$. These ideas are fairly well-known (see, for instance, \cite[Section 8]{4mkc}); our presentation broadly follows that given by Auroux \cite{fsseisf}. Choose a smooth fibre $C$ over a point $x_{\infty}$ on the boundary $\partial\Delta$ of $\Delta$, along with paths $\gamma_i \subset \Delta \setminus \Sigma$ from $x_{\infty}$ to $x_i$, assumed to be disjoint from each other and $\partial \Delta$ except for at the common starting point $x_{\infty}$. Relabelling as necessary, we may assume that the paths emanating from $x_{\infty}$ are labelled $\gamma_1,\gamma_2,\ldots,\gamma_n$ in a clockwise order.

The fibre $C$ over $x_{\infty}$ is a smooth $2$-torus; recall from Example \ref{ex:elliptic} that $\mathrm{H}_1(C,\bZ)$, equipped with the usual intersection form, is isomorphic to the pseudolattice $\mathrm{E}$.  
The singular fibre over a critical value $x_i$ is a nodal curve, obtained from $C$ by collapsing a simple closed curve $v_i \subset C$ called the \emph{vanishing cycle}; we denote the homology class of $v_i$ by  $[v_i] \in \mathrm{H}_1(C,\bZ)$.

Monodromy around a loop based at $x_{\infty}$ acts on $C$ as an element of the mapping class group $\mathrm{SL}(2,\bZ)$ of the $2$-torus.  More precisely, for each point $x_i$,  monodromy around the loop given by going out along the curve $\gamma_i$, around a small anticlockwise loop containing $x_i$, then back along $\gamma_i$ acts on $C$ as a Dehn twist $\tau_i \in \mathrm{SL}(2,\bZ)$ around the vanishing cycle $v_i$. Moreover, anticlockwise monodromy around $\partial\Delta$ gives a further element $\tau_{\infty} \in \mathrm{SL}(2,\bZ)$, which we call the \emph{total monodromy}. This setup gives a \emph{monodromy factorization} of $\tau_{\infty}$ as the product of the $\tau_i$, we write it as
\[\tau_{\infty} = (\tau_1,\tau_2,\ldots,\tau_n).\]

Choose generators $a,b \in \mathrm{E} \cong \mathrm{H}_1(C,\bZ)$ as in Definition \ref{def:E}, and let $\delta_a,\delta_b \in \mathrm{SL}(2,\bZ)$ denote Dehn twists along corresponding curves in $C$. The mapping class group is generated by $\delta_a$ and $\delta_b$ and these correspond, respectively, to the generators 
\[M_{1,0} := \begin{pmatrix} 1 & 1 \\ 0 & 1 \end{pmatrix} \quad \text{and} \quad M_{0,1} := \begin{pmatrix} 1& 0 \\ -1 & 1 \end{pmatrix}\]
of $\mathrm{SL}(2,\bZ)$. The Dehn twist about a simple closed curve in the class $pa + qb \in \mathrm{H}_1(C,\bZ)$ then corresponds to the matrix
\[M_{p,q} := \begin{pmatrix} 1-pq & p^2 \\ -q^2 & 1+pq \end{pmatrix} \in \mathrm{SL}(2,\bZ).\]

If $[v_i] := p_ia + q_ib \in \mathrm{H}_1(C,\bZ)$, then the Dehn twist $\tau_i$ associated to $\gamma_i$ corresponds to the matrix $M_{p_i,q_i}$. Thus the monodromy factorization
\[\tau_{\infty} = (\tau_1,\tau_2,\ldots,\tau_n)\]
may be rewritten as a matrix product
\[M_{\infty} = M_{p_1,q_1}M_{p_{2},q_{2}}\cdots M_{p_n,q_n},\]
where $M_{\infty}$ is the total monodromy matrix. 

It is well-known that, for a given choice of special fibre $C$ and paths $\gamma_i$, the monodromy factorization completely determines the structure of $Y$.  However, different choices of the paths $\gamma_i$ will lead to different monodromy factorizations for the same $Y$, as will different choices of basis $(a,b)$ for $\mathrm{H}_1(C,\bZ) \cong \mathrm{E}$. To account for this, note that the set of possible choices for the paths $\{\gamma_1,\ldots,\gamma_n\}$ (up to deformation in $\Delta\setminus \Sigma$) is acted on by the $n$-strand braid group $B_n$. The corresponding action of $B_n$ on monodromy factorizations is called \emph{Hurwitz equivalence} and is generated by the \emph{Hurwitz moves}
\[ (\tau_1,\ldots,\tau_{i-1},\tau_{i},\tau_{i+1},\tau_{i+2},\ldots,\tau_n) \sim (\tau_1, \ldots, \tau_{i-1},(\tau_{i}\tau_{i+1}\tau_{i}^{-1}),\tau_{i},\tau_{i+2},\ldots,\tau_n)\]
and their inverses; geometrically, the Hurwitz move above corresponds to deforming the path $\gamma_{i+1}$ across the point $x_{i}$. Moreover, a different choice of basis $(a,b)$ for $\mathrm{H}_1(C,\bZ) \cong \mathrm{E}$ acts on the monodromy factorization by \emph{global conjugation}, which simply replaces every $\tau_i$ with $\psi\tau_i\psi^{-1}$ for some $\psi \in \mathrm{SL}(2,\bZ)$ (note that this operation also changes $\tau_{\infty}$ to  $\psi\tau_{\infty}\psi^{-1}$).  

This justifies the following well-known theorem, which seems to be originally due to Moishezon \cite[Section II.2]{cscscpp}; this form appears in \cite[Section 2.1]{fsseisf}.

\begin{theorem} \label{thm:aurouxclass} The classification of genus $1$ Lefschetz fibrations $\pi\colon Y \to \Delta$, up to diffeomorphism, is equivalent to the classification of monodromy factorizations in $\mathrm{SL}(2,\bZ)$, up to Hurwitz equivalence and global conjugation. 
\end{theorem}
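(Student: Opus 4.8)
The plan is to prove Theorem~\ref{thm:aurouxclass} by exhibiting a bijection between the two classification problems: one construction sending a fibration to its factorization, and an inverse construction reconstructing the fibration from combinatorial data, together with a verification that the ambiguities on the two sides correspond. The forward direction has essentially been assembled in the discussion preceding the theorem. Fixing a smooth fibre $C$ over $x_\infty \in \partial\Delta$ and a system of paths $\gamma_i$ produces the ordered tuple of Dehn twists $\tau_\infty = (\tau_1,\ldots,\tau_n)$, and the only two remaining choices — the homotopy classes of the paths $\gamma_i$ in $\Delta\setminus\Sigma$, and the identification $\mathrm{H}_1(C,\bZ)\cong\mathrm{E}$ — were seen to act by Hurwitz moves and by global conjugation respectively. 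Hence the forward construction descends to a \emph{well-defined} map from diffeomorphism classes of genus $1$ Lefschetz fibrations to equivalence classes of monodromy factorizations under Hurwitz equivalence and global conjugation.

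For the reverse construction I would reconstruct $Y$ from a factorization $(\tau_1,\ldots,\tau_n)$ by the standard gluing procedure. Delete small disjoint open discs $D_i$ about each critical value $x_i$; the remaining planar region $\Delta\setminus\bigcup_i D_i^{\circ}$ is homotopy equivalent to a wedge of $n$ circles, so a $2$-torus bundle over it is determined up to isomorphism by its monodromy representation $\pi_1 \to \mathrm{SL}(2,\bZ)$, which I take to send the loop encircling $\partial D_i$ to $\tau_i$. Over each $D_i$ I then glue in the standard local Lefschetz model of the complex Morse singularity $(z_1,z_2)\mapsto z_1^2+z_2^2$, whose boundary monodromy is precisely the Dehn twist about its vanishing cycle, namely $\tau_i$. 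The matching of boundary monodromies permits the gluing, and the resulting closed-fibre fibration $\pi\colon Y\to\Delta$ realizes the prescribed factorization.

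I would then check that these constructions are mutually inverse on equivalence classes. Reading off the monodromy of a reconstructed fibration returns the original tuple by construction. Conversely, given any genus $1$ Lefschetz fibration, restricting over $\Delta\setminus\bigcup_i D_i^{\circ}$ gives a $T^2$-bundle whose isomorphism type is captured by the $\tau_i$, while a neighbourhood of each singular fibre is diffeomorphic to the local Lefschetz model determined by the isotopy class of its vanishing cycle $v_i$. Since an essential simple closed curve on a $2$-torus has primitive class $[v_i]\in\mathrm{H}_1(C,\bZ)$ and its isotopy class is determined up to sign by that class, the local model is pinned down by $\tau_i = M_{p_i,q_i}$, and the whole fibration is recovered from its factorization up to fibre-preserving diffeomorphism. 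One then observes that the two equivalence moves are realized by genuine diffeomorphisms of the total space: a Hurwitz move arises from the braid-group action on the vanishing paths, and global conjugation from an automorphism of the reference fibre $C$.

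The main obstacle will be this last point, namely promoting the combinatorial equivalence to an honest diffeomorphism and, in the reverse direction, arguing that a diffeomorphism $Y\cong Y'$ of total spaces forces the two factorizations to be equivalent. The delicate ingredient is an isotopy-extension argument showing that a diffeomorphism of total spaces can be deformed to one compatible with the maps to $\Delta$, so that the comparison reduces to the combinatorics of vanishing paths; this is precisely what upgrades the naive assertion ``the factorization determines $Y$'' to the stated equivalence of classifications. As this is classical and originally due to Moishezon \cite{cscscpp}, I would invoke the established reconstruction and uniqueness results rather than reprove the isotopy-extension step in full.
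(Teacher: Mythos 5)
Your proposal is correct and takes essentially the same route as the paper: the paper also treats the preceding discussion (choices of vanishing paths act by Hurwitz moves, change of basis for $\mathrm{H}_1(C,\bZ)$ acts by global conjugation) as the justification, and defers the genuinely hard step --- that the factorization determines $Y$ and that a diffeomorphism of total spaces forces equivalent factorizations --- to the classical literature, citing Moishezon \cite{cscscpp}. Your extra detail on reconstructing $Y$ by gluing local Lefschetz models onto a torus bundle over the punctured disc is a fleshed-out version of exactly what that reference provides, so the two arguments coincide in substance.
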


\subsection{Genus $1$ Lefschetz fibrations and pseudolattices}

We next develop the relationship between this fairly classical picture and the theory of pseudolattices developed in the previous sections. Most of the ideas here are not new, but there has been a resurgence in their study following the development of the theory of \emph{string junctions} in the physics literature. In our discussion we follow the recent mathematical treatment of string junctions by Grassi, Halverson, and Shaneson \cite{gtsj}. 

By parallel transporting the vanishing cycle $v_i$ along the path $\gamma_i$ we obtain a \emph{thimble} $t_i$. Let $J$ denote the free abelian group generated by the thimbles $\{t_1,\ldots,t_n\}$; its general element is a formal sum $\sum_{i=1}^n k_i t_i$, for $k_i \in \bZ$, such sums are called \emph{junctions}. Grassi, Halverson, and Shaneson \cite[Theorem 2.5]{gtsj} show that the natural map $J \to \mathrm{H}_2(Y,C;\bZ)$ taking a thimble $t_i$ to its class $[t_i]$ in relative homology is an isomorphism; we therefore have that $\mathrm{H}_2(Y,C;\bZ)$ is a free abelian group of rank $n$ generated by $\{[t_1],\ldots,[t_n]\}$. There is a natural map $\phi\colon \mathrm{H}_2(Y,C;\bZ) \to \mathrm{H}_1(C,\bZ)$ called the \emph{asymptotic charge}, arising from the long exact sequence of a pair, which takes the class $[t_i]$ of a thimble $t_i$  to the class $[v_i]$ of its corresponding vanishing cycle $v_i$. 

We can equip $\mathrm{H}_2(Y,C;\bZ)$ with a bilinear form as follows, which we call the \emph{Seifert pairing} after an analogous notion in singularity theory \cite[Section 2.3]{sdm2}. Let $\epsilon = 1 - \max_{z_i \in \Sigma}|z_i|>0$ and define a closed disc $\Delta' := \{z \in \bC: |z| \leq 1 - \epsilon\}$. Note that $\Sigma \subset \Delta' \subset \Delta$ by construction. Fix a group of diffeomorphisms $\varphi_{\theta} \colon \Delta \to \Delta$, for each $\theta \in \bR$, so that $\varphi_{\theta}$ fixes the disc $\Delta'$ and acts as an clockwise rotation through an angle of $\theta$ on $\partial\Delta$. Let $\psi_{\theta}$ denote a family of lifts of these diffeomorphisms to diffeomorphisms of $Y$.

\begin{definition} The \emph{Seifert pairing} 
\[\langle \cdot, \cdot \rangle_{\mathrm{Sft}} \colon \mathrm{H}_2(Y,C;\bZ) \times \mathrm{H}_2(Y,C;\bZ) \longrightarrow \bZ\]
is the map sending $(u_1,u_2)$ to the topological intersection product $\langle u_1,(\psi_{\pi})_*u_2 \rangle_{\mathrm{top}}$.
\end{definition}

Note that the definition of the Seifert pairing depends only upon $Y$; in particular, it does not depend upon the choices of the smooth fibre $C$ or the paths $\gamma_i$.

Given a basis $([t_1],\ldots,[t_n])$ of thimbles for $\mathrm{H}_2(Y,C;\bZ)$ (which does depend upon the choices of paths $\gamma_i$), we may compute the Seifert pairing explicitly. With notation as above, if $\phi([t_i]) = p_ia + q_ib$, the Seifert pairing on thimbles is given by
\[\langle [t_i],[t_j]\rangle_{\mathrm{Sft}} = \left\{\begin{array}{cl} q_ip_j - p_iq_j & \text{if } i < j \\ 1 & \text{if } i = j \\ 0 & \text{if } i > j \end{array}\right.\]
and may be extended to $\mathrm{H}_2(Y,C;\bZ)$ by linearity. 

\begin{remark} \label{rem:junctionpairing} The Seifert pairing defined above is not the same as the \emph{junction pairing} used in the string junction literature (see, for instance, \cite[Definition 2.8]{gtsj}). Instead, if $\chi$ is the Gram matrix of the Seifert pairing, the junction pairing arises as the symmetrized form $-\frac{1}{2}(\chi + \chi^T)$.
\end{remark}

The diffeomorphism $\psi_{-2\pi}$ induces an automorphism $(\psi_{-2\pi})_*\colon \mathrm{H}_2(Y,C;\bZ) \to \mathrm{H}_2(Y,C;\bZ)$. These concepts give us a pseudolattice.

\begin{proposition} \label{prop:plisZ} Let $\pi\colon Y \to \Delta$ be an genus $1$ Lefschetz fibration. 
\begin{enumerate}
\item $\mathrm{H}_2(Y,C;\bZ)$, equipped with the Seifert pairing, is a unimodular pseudolattice, with Serre operator given by $(\psi_{-2\pi})_*$
\item After fixing an ordered set of paths $\gamma_i$, the classes of the corresponding thimbles $([t_1],\ldots,[t_n])$ give an exceptional basis for $\mathrm{H}_2(Y,C;\bZ)$, such that $[v_i] := \phi([t_i]) \in \mathrm{H}_1(C,\bZ)$ is primitive for each $1 \leq i \leq n$. Hurwitz moves act on this exceptional basis as mutations.
\item Define $\zeta\colon \mathrm{Z}([v_1],\ldots,[v_n]) \to \mathrm{E}$ as in Example \ref{ex:directed}. Then there is an isomorphism $\mathrm{H}_2(Y,C;\bZ) \cong \mathrm{Z}([v_1],\ldots,[v_n])$ that commutes with the maps to $\mathrm{E}$, such that the twists $T_{\zeta_i}$ are the Dehn twists $\tau_i$ and the factorization $T_{\zeta} = T_{\zeta_1}\cdots T_{\zeta_n}$ is the monodromy factorization.
\end{enumerate}
\end{proposition}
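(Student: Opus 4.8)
The plan is to use the explicit Gram matrix of the Seifert pairing on the thimble basis as the backbone of the whole argument. From the formula for $\langle [t_i],[t_j]\rangle_{\mathrm{Sft}}$ recalled just above the proposition, this Gram matrix is upper-triangular with $1$'s on the diagonal, and this single observation does most of the work. Its determinant is $1$, so the Seifert pairing is nondegenerate and unimodular, which gives the pseudolattice and unimodularity assertions of (1); the vanishing $\langle [t_i],[t_j]\rangle_{\mathrm{Sft}}=0$ for $i>j$ together with $\langle [t_i],[t_i]\rangle_{\mathrm{Sft}}=1$ is precisely the statement that $([t_1],\ldots,[t_n])$ is an exceptional basis, giving the first half of (2). (That these classes form a basis is already supplied by Grassi--Halverson--Shaneson.)

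For the Serre operator claim in (1), I would argue formally from the defining properties of the family $\psi_\theta$. Two facts are needed: that each $\psi_\theta$, being an orientation-preserving diffeomorphism of $Y$, preserves the topological intersection product, so $\langle (\psi_\theta)_* x,(\psi_\theta)_* y\rangle_{\mathrm{top}}=\langle x,y\rangle_{\mathrm{top}}$; and that the lifts compose up to isotopy, so that $(\psi_\theta)_*(\psi_{\theta'})_*=(\psi_{\theta+\theta'})_*$ on homology. Writing $\langle u_1,u_2\rangle_{\mathrm{Sft}}=\langle u_1,(\psi_\pi)_* u_2\rangle_{\mathrm{top}}$ and combining these with the symmetry of the topological intersection form on the middle-dimensional relative classes of the oriented $4$-manifold $Y$, one computes
\[
\langle u_2,(\psi_{-2\pi})_* u_1\rangle_{\mathrm{Sft}} = \langle u_2,(\psi_{-\pi})_* u_1\rangle_{\mathrm{top}} = \langle (\psi_\pi)_* u_2, u_1\rangle_{\mathrm{top}} = \langle u_1,u_2\rangle_{\mathrm{Sft}},
\]
which is exactly the relation characterizing $(\psi_{-2\pi})_*$ as the Serre operator. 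The only care required is in pinning down the orientation conventions that make the topological intersection symmetric and the lifts compose correctly; once these are fixed the identity is immediate.

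It then remains to handle primitivity, the isomorphism in (3), and the Hurwitz--mutation correspondence. Primitivity of $[v_i]=\phi([t_i])$ is standard: $v_i$ is an embedded simple closed curve on the $2$-torus $C$, hence represents a primitive class in $\mathrm{H}_1(C,\bZ)$. For (3) I would simply compare Gram matrices: writing $[v_i]=p_i a+q_i b$, a direct computation in $\mathrm{E}$ using Definition \ref{def:E} gives $\langle [v_i],[v_j]\rangle_{\mathrm{E}}=q_ip_j-p_iq_j$, which is exactly the off-diagonal entry of the Seifert pairing for $i<j$. Hence $z_i\mapsto [t_i]$ is an isomorphism of pseudolattices $\mathrm{Z}([v_1],\ldots,[v_n])\xrightarrow{\sim}\mathrm{H}_2(Y,C;\bZ)$, and it commutes with the maps to $\mathrm{E}$ because $\zeta(z_i)=[v_i]=\phi([t_i])$. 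The statements about the twists follow from Examples \ref{ex:sphob} and \ref{ex:directed}: each $T_{\zeta_i}$ is the reflection $w\mapsto w-\langle [v_i],w\rangle_{\mathrm{E}}[v_i]$, which is the matrix $M_{p_i,q_i}$ realizing the Dehn twist $\tau_i$, and $T_\zeta=T_{\zeta_1}\cdots T_{\zeta_n}$ by Proposition \ref{prop:glueCY}(b); this product is precisely the monodromy factorization of $\tau_\infty$.

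The main obstacle is the remaining claim in (2), that Hurwitz moves act as mutations. Here one must translate the geometric effect of deforming a path $\gamma_{i+1}$ across the critical value $x_i$ into an algebraic operation on thimble classes. The expected statement is that this replaces $[t_{i+1}]$ by $[t_{i+1}]-\langle [t_i],[t_{i+1}]\rangle_{\mathrm{Sft}}[t_i]=\bL_{[t_i]}([t_{i+1}])$, with the relative order otherwise as in $\bL_{i,i+1}$; this is a Picard--Lefschetz computation for how a thimble transforms when its boundary is parallel transported across a node. One must then check compatibility with the monodromy level, where the Hurwitz move sends $(\tau_i,\tau_{i+1})$ to $(\tau_i\tau_{i+1}\tau_i^{-1},\tau_i)$ and the new vanishing cycle is $\tau_i(v_{i+1})$, of class $[v_{i+1}]-\langle[v_i],[v_{i+1}]\rangle_{\mathrm{E}}[v_i]$; this matches the asymptotic charge of $\bL_{[t_i]}([t_{i+1}])$ once one uses $\langle[v_i],[v_{i+1}]\rangle_{\mathrm{E}}=\langle [t_i],[t_{i+1}]\rangle_{\mathrm{Sft}}$, which is immediate from Lemma \ref{lem:useful}. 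This geometric input is where the real content lies; everything else is bookkeeping against the upper-triangular Gram matrix.
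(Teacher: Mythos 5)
Your proposal is correct and follows essentially the same route as the paper: the identical three-step chain of equalities (using that $\psi_\pi$ is a diffeomorphism, composition of the lifts, and symmetry of the topological intersection form on the oriented $4$-manifold) establishes the Serre operator claim, the upper-triangular Gram matrix handles unimodularity and exceptionality, primitivity comes from the vanishing cycles being simple closed curves, and the isomorphism in (3) is the same identification $z_i \mapsto [t_i]$ verified by matching $\langle [v_i],[v_j]\rangle_{\mathrm{E}} = q_ip_j - p_iq_j$ against the Seifert pairing. The only difference is that you spell out the Hurwitz-move/mutation correspondence (via the Picard--Lefschetz transformation of thimbles and the conjugated Dehn twist $\tau_i\tau_{i+1}\tau_i^{-1} = \tau_{\tau_i(v_{i+1})}$) in more detail than the paper, which dismisses it as ``a simple computation'' with the matrices $M_{p,q}$.
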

\begin{proof} The only difficult part of (1) is the statement about the Serre operator. Indeed, note that $\langle u_2 , (\psi_{-2\pi})_* u_1\rangle_{\mathrm{Sft}}$ is equal to the topological intersection product $\langle u_2,(\psi_{-\pi})_*u_1 \rangle_{\mathrm{top}}$, and this latter intersection is equal to $\langle (\psi_{\pi})_*u_2,u_1 \rangle_{\mathrm{top}}$ as $\psi_{\pi}$ is a diffeomorphism. Finally, the topological intersection product is symmetric on $4$-manifolds, so  $\langle u_2 , (\psi_{-2\pi})_* u_1\rangle_{\mathrm{Sft}} = \langle u_1, (\psi_{\pi})_*u_2 \rangle_{\mathrm{top}} = \langle u_1, u_2\rangle_{\mathrm{Sft}}$, as required.

The proof of (2) is straightforward: the fact that $([t_1],\ldots,[t_n])$ forms an exceptional basis is immediate from the definition, and the classes $[v_i] \in \mathrm{H}_1(C,\bZ)$ are primitive as the vanishing cycles are all simple closed curves. The statement about mutations is a simple computation using the explicit description of the monodromy matrices $M_{p,q}$ and the Seifert pairing.

Finally, to prove (3), note that the isomorphism $\mathrm{H}_2(Y,C;\bZ) \to \mathrm{Z}([v_1],\ldots,[v_n])$ is just the map taking the exceptional basis of thimbles $([t_1],\ldots,[t_n])$ to the standard exceptional basis $(z_1,\ldots,z_n)$ of $\mathrm{Z}([v_1],\ldots,[v_n])$ (c.f. Example \ref{ex:directed}); it is easy to check that this map is an isomorphism of pseudolattices.
\end{proof}

The following corollary is immediate from Proposition \ref{prop:plisZ}(3) and the properties of $\mathrm{Z}([v_1],\ldots,[v_n]) \to \mathrm{E}$, as computed in Example \ref{ex:directed}.

\begin{corollary} \label{cor:elfisCY} Let $\pi\colon Y \to \Delta$ be an genus $1$ Lefschetz fibration. Then the asymptotic charge map $\phi\colon \mathrm{H}_2(Y,C;\bZ) \to \mathrm{H}_1(C,\bZ)$ is a relative $(-1)^0$-CY spherical homomorphism of pseudolattices.
\end{corollary}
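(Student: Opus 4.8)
The plan is to deduce the corollary directly from Proposition \ref{prop:plisZ}(3) together with Example \ref{ex:directed}, by observing that the property of being a relative $(-1)^0$-CY spherical homomorphism is invariant under precomposition with an isomorphism of pseudolattices. All of the genuinely geometric content has already been absorbed into Proposition \ref{prop:plisZ}, so what remains is a short formal transfer.

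First I would recall that $\mathrm{H}_1(C,\bZ) \cong \mathrm{E}$ is the $(-1)^1$-CY pseudolattice of Definition \ref{def:E} (see Example \ref{ex:elliptic}(2)), so the target is exactly of the type treated in Example \ref{ex:directed}. By that example, the homomorphism $\zeta\colon \mathrm{Z}([v_1],\ldots,[v_n]) \to \mathrm{E}$ sending $z_i$ to $[v_i]$ is spherical and relative $(-1)^0$-CY (so its cotwist equals the Serre operator $S_{\mathrm{Z}([v_1],\ldots,[v_n])}$). Next I would invoke Proposition \ref{prop:plisZ}(3), which supplies an isomorphism of pseudolattices $\psi\colon \mathrm{H}_2(Y,C;\bZ) \to \mathrm{Z}([v_1],\ldots,[v_n])$ commuting with the maps to $\mathrm{E}$, that is, $\phi = \zeta\circ\psi$. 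It then suffices to carry the spherical and relative CY properties from $\zeta$ back along $\psi$.

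The transfer itself is an elementary check that I would spell out as follows. Let $\rho$ denote the right adjoint of $\zeta$; then $\psi^{-1}\rho$ is right adjoint to $\phi$, since $\psi$ preserves the bilinear form and hence $\langle \phi(u),v \rangle_{\mathrm{E}} = \langle \zeta\psi(u),v \rangle_{\mathrm{E}} = \langle \psi(u),\rho(v) \rangle = \langle u,\psi^{-1}\rho(v) \rangle$. The twist is then unchanged, $T_\phi = \mathrm{id}_{\mathrm{E}} - \phi\,\psi^{-1}\rho = \mathrm{id}_{\mathrm{E}} - \zeta\rho = T_\zeta$, so it is invertible and $\phi$ is spherical. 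The cotwist is merely conjugated, $C_\phi = \mathrm{id} - \psi^{-1}\rho\,\zeta\psi = \psi^{-1}C_\zeta\,\psi$; since $\psi$ is an isomorphism of pseudolattices it intertwines the Serre operators, $\psi^{-1} S_{\mathrm{Z}([v_1],\ldots,[v_n])}\,\psi = S_{\mathrm{H}_2(Y,C;\bZ)}$, and combining this with $C_\zeta = S_{\mathrm{Z}([v_1],\ldots,[v_n])}$ yields $C_\phi = S_{\mathrm{H}_2(Y,C;\bZ)}$, i.e. $\phi$ is relative $(-1)^0$-CY.

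I do not expect any real obstacle here: the substantive work—in particular the identification of the Serre operator on $\mathrm{H}_2(Y,C;\bZ)$ with $(\psi_{-2\pi})_*$ via the Seifert pairing, and the isomorphism with $\mathrm{Z}([v_1],\ldots,[v_n])$—has already been carried out in Proposition \ref{prop:plisZ}. The only points requiring care are keeping track of the direction of $\psi$ and recording the (routine) fact that sphericality and the relative $(-1)^0$-CY condition are preserved under composition with an isomorphism of pseudolattices.
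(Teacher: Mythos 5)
Your proposal is correct and follows exactly the paper's route: the paper likewise deduces the corollary immediately from Proposition \ref{prop:plisZ}(3) and the properties of $\zeta\colon \mathrm{Z}([v_1],\ldots,[v_n]) \to \mathrm{E}$ established in Example \ref{ex:directed}. The only difference is that you spell out the (routine) transfer of the spherical and relative $(-1)^0$-CY properties along the isomorphism $\psi$, which the paper leaves implicit.
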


\subsection{Quasi Landau-Ginzburg models} 

Combining Proposition \ref{prop:plisZ} with Theorem \ref{thm:aurouxclass}, we see that the problem of classifying genus $1$ Lefschetz fibrations may be reduced to a question about exceptional bases and spherical homomorphisms. In particular, we may  use Theorem \ref{thm:main} to classify genus $1$ Lefschetz fibrations, under the assumption that the asymptotic charge map is a quasi del Pezzo homomorphism. To arrange this, we make the following definition.

\begin{definition} \label{def:lg} A \emph{quasi Landau-Ginzburg \textup{(}LG\textup{)} model} is a genus $1$ Lefschetz fibration $\pi\colon Y \to \Delta$ (or simply $Y$, when no confusion is likely to result) such that, for some choice of fibre $C$ over a point in $\partial\Delta$ and basis $(a,b)$ for $\mathrm{H}_1(C,\bZ) \cong \mathrm{E}$, the following properties hold:
\begin{enumerate}
\item in the basis $(a,b)$ for $\mathrm{E}$, the total monodromy has matrix $\begin{psmallmatrix}1 & n-12 \\ 0 & 1 \end{psmallmatrix}$, where $n$ is the number of singular fibres in $Y$; and
\item $r(a)$ is primitive in $\mathrm{H}_2(Y,C;\bZ)$, where $r$ is the right adjoint to the asymptotic charge map $\phi\colon \mathrm{H}_2(Y,C;\bZ) \to \mathrm{H}_1(C,\bZ) \cong \mathrm{E}$.
\end{enumerate}
\end{definition}

\begin{remark} This definition may be thought of as a weaker version of the definition of a Landau-Ginzburg model given by Auroux, Katzarkov, and Orlov \cite{msdpsvccs}. Indeed, \cite{msdpsvccs} defines a Landau-Ginzburg model to be an elliptic fibration over $\bC$, having $n$ singular fibres of Kodaira type $\mathrm{I}_1$, that may be compactified to a rational elliptic surface over $\bP^1$ by the addition of a singular fibre of Kodaira type $\mathrm{I}_{12-n}$ at infinity. Note that this definition assumes that $n < 12$. It is easy to see that, if one shrinks the base of such a Landau-Ginzburg model to a closed disc, the resulting fibration will satisfy Definition \ref{def:lg}.
\end{remark}

 It follows that if $\pi\colon Y \to \Delta$ is a quasi LG model, then $\phi\colon \mathrm{H}_2(Y,C;\bZ) \to \mathrm{H}_1(C,\bZ)$ satisfies the conditions of Propositions \ref{prop:issurfacelike}, \ref{prop:anticanonical}, and \ref{prop:matrix}; in particular, $\mathrm{H}_2(Y,C;\bZ)$ is surface-like with point-like vector $r(a)$ and $12 - n = q(K,K) \in \bZ$, where $K$ denotes the canonical class $K = -[r(b)] \in \NS(\mathrm{H}_2(Y,C;\bZ))$. Moreover, since $n$ is the rank of $\mathrm{H}_2(Y,C;\bZ)$, Remark \ref{rem:defect} shows that the defect $\delta(\mathrm{H}_2(Y,C;\bZ)) = 0$.

By Proposition \ref{prop:plisZ} and Corollary \ref{cor:elfisCY}, we immediately see that if $\pi\colon Y \to \Delta$ is a quasi LG model, then $\phi\colon \mathrm{H}_2(Y,C;\bZ) \to \mathrm{H}_1(C,\bZ)$ satisfies conditions (1)-(3) of Definition \ref{def:qdp}. To show that condition (4) also holds, so that $\phi\colon \mathrm{H}_2(Y,C;\bZ) \to \mathrm{H}_1(C,\bZ)$ is quasi del Pezzo, we will need to work a little harder.

We begin by showing that the properties of $\mathrm{NS}(\mathrm{H}_2(Y,C;\bZ))$ may be computed from more familiar topological quantities. Recall that, for any oriented $4$-manifold $Y$, there is an intersection pairing,
\[
\mathrm{H}_2(Y,\mathbb{Z}) \times \mathrm{H}_2(Y,\mathbb{Z}) \rightarrow \mathbb{Z}
\]
making $\mathrm{H}_2(Y,\bZ)$ into a lattice. We use $\sigma(Y)$ to denote the $\sigma$-invariant of this pairing (see Definition \ref{def:signature}). The following result appears in the proof of \cite[Lemma 16]{sclf}.

\begin{lemma}
Let $\pi\colon Y \rightarrow \Delta$ be an genus $1$ Lefschetz fibration and let $\mathrm{N}$ be the kernel of the asymptotic charge map $\phi$, equipped with the pairing induced by $\langle \cdot, \cdot \rangle_{\mathrm{Sft}}$. Then 
\[
\sigma(Y) = -\sigma(\mathrm{N}).
\]
\end{lemma}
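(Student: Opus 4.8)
The plan is to compare two symmetric bilinear forms: the intersection form on $\mathrm{H}_2(Y;\bZ)$ and the Seifert form restricted to $\mathrm{N}$. First I would identify $\mathrm{N}$ topologically. Since the asymptotic charge $\phi$ is the connecting map $\partial\colon \mathrm{H}_2(Y,C;\bZ)\to\mathrm{H}_1(C;\bZ)$ in the long exact sequence of the pair $(Y,C)$, its kernel $\mathrm{N}=\ker\phi$ is the image of $j\colon\mathrm{H}_2(Y;\bZ)\to\mathrm{H}_2(Y,C;\bZ)$. The kernel of $j$ is the image of $\mathrm{H}_2(C;\bZ)\to\mathrm{H}_2(Y;\bZ)$, which is generated by the fibre class $[F]$, so $j$ induces an isomorphism $\mathrm{H}_2(Y;\bZ)/\langle[F]\rangle\xrightarrow{\sim}\mathrm{N}$. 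Moreover, by Corollary \ref{cor:elfisCY} the map $\phi$ is relative $(-1)^0$-CY, so applying Lemma \ref{lem:useful} to any $u_1,u_2\in\mathrm{N}=\ker\phi$ gives $\langle u_1,u_2\rangle_{\mathrm{Sft}}-\langle u_2,u_1\rangle_{\mathrm{Sft}}=\langle\phi(u_1),\phi(u_2)\rangle_{\mathrm{E}}=0$; thus the Seifert form is symmetric on $\mathrm{N}$ and $\sigma(\mathrm{N})$ is well defined.

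The heart of the argument will be to show that, under the isomorphism above, the Seifert form on $\mathrm{N}$ is the \emph{negative} of the intersection form on $\mathrm{H}_2(Y;\bZ)$. If $\chi$ denotes the Gram matrix of $\langle\cdot,\cdot\rangle_{\mathrm{Sft}}$, then by Remark \ref{rem:junctionpairing} the junction pairing of \cite{gtsj} is the symmetric form $-\tfrac{1}{2}(\chi+\chi^T)$; on $\mathrm{N}$, where $\chi=\chi^T$ by the symmetry just established, this is simply $-\langle\cdot,\cdot\rangle_{\mathrm{Sft}}$. On the other hand, by \cite{gtsj} the junction pairing computes topological intersection numbers of the underlying $2$-cycles, so on the absolute classes comprising $\mathrm{N}\cong\mathrm{H}_2(Y;\bZ)/\langle[F]\rangle$ it agrees with the intersection form of $Y$. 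Combining these two facts would yield $\langle u_1,u_2\rangle_{\mathrm{Sft}}=-(u_1\cdot u_2)$ for all $u_1,u_2\in\mathrm{N}$.

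It then remains to account for the class $[F]$ and the signs. The fibre class is the image of $[C]$ under $\mathrm{H}_2(\partial Y;\bZ)\to\mathrm{H}_2(Y;\bZ)$, since $C$ is a fibre lying in $\partial Y=\pi^{-1}(\partial\Delta)$; any such boundary class may be pushed into a collar of $\partial Y$ and hence lies in the radical of the intersection form. Consequently $[F]$ contributes nothing to the signature, and $\sigma$ of the intersection form on $\mathrm{H}_2(Y;\bZ)/\langle[F]\rangle$ equals $\sigma(Y)$. Since negating a symmetric form exchanges the numbers $n_+$ and $n_-$, the previous paragraph then gives $\sigma(\mathrm{N})=-\sigma(Y)$, as required. (Here both signatures are understood after discarding radicals, which does not affect $\sigma$.)

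I expect the main obstacle to be pinning down the sign in the second step. It is tempting to represent the classes of $\mathrm{N}$ by cycles supported over the inner disc $\Delta'$, on which $\psi_\pi$ acts trivially, and thereby equate the Seifert form with the intersection form directly; but this fails, because the perturbation needed to define the self-intersection is precisely what $\psi_\pi$ supplies, and this is absent over $\Delta'$. That the correct sign is $-1$ is already visible in the matching-sphere example: two thimbles $t_i,t_j$ with a common vanishing cycle span a class $[t_i]-[t_j]\in\mathrm{N}$ of Seifert norm $\langle[t_i]-[t_j],[t_i]-[t_j]\rangle_{\mathrm{Sft}}=2$, whereas the corresponding $(-2)$-sphere has geometric self-intersection $-2$. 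Routing the sign through the junction pairing, via Remark \ref{rem:junctionpairing} together with \cite{gtsj}, is what lets us sidestep this pitfall cleanly.
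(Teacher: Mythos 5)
Your proof is correct, and it takes a genuinely different route from the paper: the paper gives no argument for this lemma at all, but simply quotes it from the proof of \cite[Lemma 16]{sclf}, recording in the remark that follows that its Seifert form differs from Auroux's by a factor of $(-1)$. Your argument instead stays inside the machinery the paper already has on hand: the long-exact-sequence identification $\mathrm{N} = \mathrm{im}(j) \cong \mathrm{H}_2(Y;\bZ)/\langle [F] \rangle$; symmetry of the Seifert form on $\mathrm{N}$ via Corollary \ref{cor:elfisCY} and Lemma \ref{lem:useful}; the identity that the junction pairing equals $-\langle \cdot,\cdot \rangle_{\mathrm{Sft}}$ on $\mathrm{N}$, which is pure algebra given Remark \ref{rem:junctionpairing}; the result of \cite{gtsj} that the junction pairing of zero-asymptotic-charge junctions computes the topological intersection number of the corresponding absolute classes; and the observation that $[F]$, being induced from $\mathrm{H}_2(\partial Y;\bZ)$, lies in the radical of the intersection form, so that passing to the quotient does not change $\sigma(Y)$. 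The one load-bearing external input is the \cite{gtsj} statement, and since the entire sign of the lemma flows through it, you should cite the precise theorem there (the intersection-theoretic interpretation of the pairing, which is the main topological content of that paper) rather than the definition of the pairing alone. What your route buys is sign transparency and self-containment relative to the paper's existing references: your matching-sphere check is exactly the right sanity test, and your closing warning is well taken --- the naive comparison of the Seifert and intersection forms via closed representatives is not legitimate, because the intersection of relative classes is not computed by arbitrary representatives, and delegating that geometric content to \cite{gtsj} is what keeps your argument sound. What the paper's citation buys is brevity, at the cost of outsourcing both the proof and the sign comparison to \cite{sclf}.
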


\begin{remark}The Seifert form on $\mathrm{N}$ disagrees with the form used by Auroux \cite{sclf} by a factor of $(-1)$, hence the slight difference between our statement and his.\end{remark}

Using this we obtain the following result.

\begin{lemma}\label{lem:Y=K}
Let $Y$ be a quasi LG model. Choose a fibre $C$ as in Definition \ref{def:lg} and let $K$ denote the canonical class in the N\'{e}ron-Severi lattice $\NS(\mathrm{H}_2(Y,C;\bZ))$. Let $K^\perp$ denote the orthogonal complement of $K$ in $\mathrm{NS}(\mathrm{H}_2(Y,C;\bZ))$, equipped with the pairing $q(\cdot, \cdot)$. Then $\sigma(K^\perp) = \sigma(Y)$.
\end{lemma}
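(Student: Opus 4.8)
The plan is to identify the lattice $K^\perp$ explicitly with a quotient of the kernel $\mathrm{N} = \ker\phi$ appearing in the previous lemma, and then transport the signature equality $\sigma(Y) = -\sigma(\mathrm{N})$ across this identification. Throughout write $\mathrm{G} := \mathrm{H}_2(Y,C;\bZ)$ and $\mathbf{p} := r(a)$, and recall that $\langle\cdot,\cdot\rangle_{\mathrm{G}} = \langle\cdot,\cdot\rangle_{\mathrm{Sft}}$. First I would record two adjunction computations: for $u \in \mathrm{G}$ with $\phi(u) = pa + qb$, we have $\rank(u) = \langle u, \mathbf{p}\rangle_{\mathrm{Sft}} = \langle \phi(u), a\rangle_{\mathrm{E}} = q$ and $\langle u, r(b)\rangle_{\mathrm{Sft}} = \langle \phi(u), b\rangle_{\mathrm{E}} = -p$. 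Since $\phi r(a) = 0$ by Proposition \ref{prop:issurfacelike}, the vector $\mathbf{p}$ lies in $\mathrm{N}$; moreover, by Lemma \ref{lem:useful} the Seifert form restricted to $\mathrm{N}$ is symmetric, and $\mathbf{p}$ lies in its radical because $\langle u, \mathbf{p}\rangle_{\mathrm{Sft}} = \rank(u) = 0$ for every $u \in \mathrm{N} \subseteq \mathbf{p}^\perp$.

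Next I would study the natural map $\mathrm{N} \to \NS(\mathrm{G}) = \mathbf{p}^\perp/\mathbf{p}$, $u \mapsto [u]$, which is well-defined since $\mathrm{N} \subseteq \mathbf{p}^\perp$. Using $K = -[r(b)]$ (Proposition \ref{prop:anticanonical}) together with the two adjunction identities, a class $[u]$ (with representative $u \in \mathbf{p}^\perp$, so that $q = 0$) satisfies $q([u],K) = \langle u, r(b)\rangle_{\mathrm{Sft}} = -p$; hence $[u] \in K^\perp$ exactly when $p = 0$, that is, exactly when $\phi(u) = 0$, i.e. $u \in \mathrm{N}$. It follows that the image of the map is precisely $K^\perp$, while its kernel is $\mathrm{N} \cap \mathbf{p}\bZ = \mathbf{p}\bZ$, giving an isomorphism of abelian groups $\mathrm{N}/\mathbf{p}\bZ \xrightarrow{\ \sim\ } K^\perp$. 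Because $q([u_1],[u_2]) = -\langle u_1,u_2\rangle_{\mathrm{Sft}}$ for $u_1,u_2 \in \mathbf{p}^\perp$, this isomorphism carries the Seifert form induced on $\mathrm{N}/\mathbf{p}\bZ$ to $-q$, i.e. it is an isometry up to an overall sign.

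Finally I would run the signature bookkeeping. Since $\mathbf{p}$ is a null vector in the radical of the Seifert form on $\mathrm{N}$, passing to the quotient $\mathrm{N}/\mathbf{p}\bZ$ merely deletes one zero from the real diagonalization, so $\sigma(\mathrm{N}) = \sigma(\mathrm{N}/\mathbf{p}\bZ)$, where $\sigma = n_+ - n_-$ is read off over $\bR$ and is defined even for the degenerate form on $\mathrm{N}$. Negating a form interchanges $n_+$ and $n_-$, so the isometry up to sign above gives $\sigma(K^\perp) = -\sigma(\mathrm{N}/\mathbf{p}\bZ) = -\sigma(\mathrm{N})$. Combining this with the previous lemma $\sigma(Y) = -\sigma(\mathrm{N})$ yields $\sigma(K^\perp) = \sigma(Y)$, as required.

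The step I expect to be the main obstacle is establishing that $\mathrm{N}/\mathbf{p}\bZ \to K^\perp$ is an honest isomorphism of (possibly degenerate) lattices rather than merely of groups: one must verify that the image is \emph{all} of $K^\perp$ and that the sign conventions relating $q$ and the Seifert form line up correctly. Relatedly, one must track the null direction $\mathbf{p}$ with care so that the signature invariant is genuinely unaffected by the degeneracy; this point is most delicate in the case $n = 12$, where $q(K,K) = 0$ and $K^\perp$ itself is degenerate, so that $\sigma$ must be understood as $n_+ - n_-$ for a form with a nontrivial radical.
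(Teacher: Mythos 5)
Your proposal is correct and follows essentially the same route as the paper: both reduce to the identity $\sigma(Y) = -\sigma(\mathrm{N})$ for $\mathrm{N} = \ker\phi$ from the preceding lemma, and then identify $\mathrm{N}$, up to the sign flip built into $q$, with $K^\perp$. The only difference is one of precision: where the paper says the orthogonal complement of $r(b)$ in $\mathbf{p}^\perp$ ``is equivalent to'' $K^\perp$ in $\NS(\mathrm{H}_2(Y,C;\bZ))$, you spell out that the identification is really an isomorphism $\mathrm{N}/\mathbf{p}\bZ \cong K^\perp$ (surjective with kernel $\mathbf{p}\bZ$), and that quotienting by the null vector $\mathbf{p}$ leaves $n_+ - n_-$ unchanged --- exactly the bookkeeping the paper leaves implicit.
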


\begin{proof}
For any $u \in \mathrm{H}_2(Y,C;\bZ)$, we have $\phi(u) = 0$ if and only if $\langle \phi(u), a \rangle_\mathrm{E} = \langle \phi(u), b \rangle_\mathrm{E} = 0$. Applying adjunction, we see that this is true if and only if $v \in (r(a), r(b))^\perp$, so $\mathrm{N} = \ker(\phi) = (r(a),r(b))^\perp$.   This is of course the orthogonal complement of $r(b)$ in $r(a)^\perp = \mathbf{p}^\perp$, which is equivalent to $K^\perp$ in $\mathrm{NS}(\mathrm{H}_2(Y,C;\bZ))$ by Proposition \ref{prop:anticanonical}. We thus see that $\sigma(K^\perp) = - \sigma(\mathrm{N})$, as $q$ is defined to be the negative of the Seifert form. Applying the previous lemma gives the result.
\end{proof}

The following lemma, whose proof is straightforward, shows that we may use this to deduce $\sigma(\mathrm{NS}(\mathrm{H}_2(Y,C;\bZ)))$ from $\sigma(K^{\perp}) = \sigma(Y)$ and $q(K,K)$. 

\begin{lemma}\label{lem:orth}
If $u$ is an element of a lattice $\mathrm{L}$ equipped with a nondegenerate symmetric bilinear form $q(\cdot, \cdot)$, then
\[
\sigma(u^\perp) = \begin{cases} \sigma(\mathrm{L}) -1 & \text{ if } q(u,u) > 0 \\
 \sigma(\mathrm{L}) & \text{ if } q(u,u) = 0 \\ 
\sigma(\mathrm{L}) + 1 & \text{ if } q(u,u) < 0
\end{cases}
\]
\end{lemma}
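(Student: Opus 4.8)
The plan is to pass to the reals and reduce everything to a signature computation in linear algebra. Both $\sigma(\mathrm{L})$ and $\sigma(u^\perp)$ depend only on the diagonalized Gram matrix of the real extension, so I would work throughout in $V := \mathrm{L} \otimes \bR$, a real vector space carrying the nondegenerate symmetric form $q$ of signature $(n_+, n_-)$. (Here I extend Definition \ref{def:signature} to possibly-degenerate forms in the obvious way, setting $\sigma := n_+ - n_-$ after diagonalizing and discarding any zero entries on the diagonal.) The two facts I would lean on are that signature is additive over $q$-orthogonal direct sums and that an isotropic summand lying in the radical contributes nothing. The case $u = 0$ is trivial (then $u^\perp = \mathrm{L}$ and $q(u,u) = 0$), so I assume $u \neq 0$.

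First I would dispose of the nondegenerate cases $q(u,u) \neq 0$. Then $\langle u \rangle$ is a nondegenerate line, so $V = \langle u \rangle \perp u^\perp$ is an orthogonal direct sum, $u^\perp$ is nondegenerate, and additivity gives $\sigma(\mathrm{L}) = \sigma(\langle u \rangle) + \sigma(u^\perp)$. Since $\sigma(\langle u \rangle) = +1$ when $q(u,u) > 0$ and $\sigma(\langle u \rangle) = -1$ when $q(u,u) < 0$, this immediately yields the first and third cases of the statement.

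The isotropic case $q(u,u) = 0$ with $u \neq 0$ is where the main difficulty lies: now $u \in u^\perp$, so $u^\perp$ is degenerate (its radical is exactly $\langle u \rangle$, since $(u^\perp)^\perp = \langle u \rangle$ by nondegeneracy of $q$ on $V$), and the clean orthogonal splitting above is unavailable. To get around this I would complete $u$ to a hyperbolic plane: using nondegeneracy of $q$ I would choose $w \in V$ with $q(u,w) = 1$, then correct $w$ by a suitable multiple of $u$ so that $q(w,w) = 0$, obtaining a hyperbolic plane $H := \langle u, w \rangle$ with $\sigma(H) = 0$ and an orthogonal decomposition $V = H \perp H^\perp$ in which $H^\perp$ is nondegenerate of signature $(n_+ - 1, n_- - 1)$; thus $\sigma(H^\perp) = \sigma(\mathrm{L})$.

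Finally I would locate $u^\perp$ inside this decomposition. Because $u \in H$ and $u \perp H^\perp$, both $\langle u \rangle$ and $H^\perp$ sit inside $u^\perp$; comparing dimensions ($1 + (\dim V - 2) = \dim V - 1 = \dim u^\perp$) and noting $\langle u \rangle \cap H^\perp = 0$ gives the orthogonal decomposition $u^\perp = \langle u \rangle \oplus H^\perp$, on which $q$ restricts to zero on the $\langle u \rangle$ summand and to the nondegenerate form on $H^\perp$. Since the isotropic summand contributes nothing to the signature, $\sigma(u^\perp) = \sigma(H^\perp) = \sigma(\mathrm{L})$, which is the middle case. The only genuine care needed is this bookkeeping in the isotropic case; the nondegenerate cases are routine.
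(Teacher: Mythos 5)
Your proof is correct, but there is nothing in the paper to compare it against: the paper states Lemma \ref{lem:orth} with the remark that its proof is ``straightforward'' and omits it entirely, so your write-up supplies the missing argument rather than paralleling one. Your structure is the standard one and all steps check out: the nondegenerate cases follow from the orthogonal splitting $V = \langle u \rangle \perp u^\perp$ and additivity of signature, and you correctly identify the isotropic case as the only delicate point, since there $u^\perp$ is degenerate with radical exactly $\langle u \rangle$, so the statement only makes sense after extending Definition \ref{def:signature} to degenerate forms by discarding zero diagonal entries --- a point the paper glosses over but which is genuinely needed, because Proposition \ref{prop:signature} applies the lemma to $u = K$ with $q(K,K) = 12 - n$, which vanishes precisely in the case $n = 12$. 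Your hyperbolic-plane completion (the correction $w \mapsto w - \tfrac{1}{2}q(w,w)u$ works because $q(u,w) = 1$ and $q(u,u) = 0$), the splitting $V = H \perp H^\perp$ with $\sigma(H) = 0$, and the dimension count giving $u^\perp = \langle u \rangle \oplus H^\perp$ are all sound. One small point you assert rather than check: working in $V = \mathrm{L} \otimes \bR$ requires knowing that the real extension of the integral complement $u^\perp \subset \mathrm{L}$ equals the real orthogonal complement of $u$ in $V$; this is true because $u^\perp$ is the kernel of the nonzero functional $q(\cdot,u)\colon \mathrm{L} \to \bZ$, hence has rank $\rank(\mathrm{L}) - 1$ and spans the full real hyperplane after tensoring, but it deserves a sentence.
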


It remains to compute the invariants $\sigma(Y)$. To do this we use work of Matsumoto \cite{4mft2}, following earlier work of Meyer \cite{dsvf}. 

\begin{proposition}\label{prop:matsu}
Let $Y$ be a quasi LG model with $n$ singular fibres. Then 
\[
\sigma(Y) = \begin{cases}
5-n & \text{if } n < 12 \\
-8 & \text{if } n=12 \\
3-n & \text{if } n > 12
\end{cases}
\]
\end{proposition}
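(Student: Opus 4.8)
The plan is to read off $\sigma(Y)$ from the monodromy factorization of $Y$, using the signature cocycle of Meyer \cite{dsvf} in the explicit genus-$1$ form established by Matsumoto \cite{4mft2}. By Proposition \ref{prop:plisZ}, a choice of paths presents $Y$ by a monodromy factorization $M_\infty = M_{p_1,q_1}\cdots M_{p_n,q_n}$ in which every factor is a positive Dehn twist (a parabolic conjugate to $M_{1,0}$), and since $Y$ is a quasi LG model the total monodromy is the parabolic $M_\infty = \begin{psmallmatrix}1 & n-12\\0&1\end{psmallmatrix}$. As $\sigma(Y)$ is a diffeomorphism invariant it depends only on this factorization up to Hurwitz equivalence and global conjugation, so it is computed by Meyer's invariant of the factorization.

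Meyer's theorem realizes the signature of a fibred four-manifold as a sum of values of a conjugacy-invariant $2$-cocycle on $\mathrm{SL}(2,\bZ)$; Matsumoto's localization packages this, for genus-$1$ Lefschetz fibrations over the disc, into a formula of the shape $\sigma(Y) = c\,n + \beta(M_\infty)$. Here $c$ is the local signature of a single nodal ($\mathrm{I}_1$) fibre, which is the same for every factor by conjugacy-invariance, and $\beta$ is a boundary defect depending only on the total monodromy. The key structural feature I would exploit is that $\beta$, arising from Meyer's \emph{function} (a quasimorphism, not a homomorphism), is affine on the parabolic subgroup but carries an additional $\mathrm{sign}$ term, so that $\beta\begin{psmallmatrix}1 & m\\0&1\end{psmallmatrix}$ has a jump discontinuity at $m=0$.

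To fix the constants I would anchor at $n=12$. There $M_\infty = \mathrm{id}$, so $\beta(M_\infty)=0$, and $Y$ caps off over $S^2$ to a closed genus-$1$ Lefschetz fibration with twelve nodal fibres, i.e. a rational elliptic surface, whose signature is $-8$; this forces $c=-\tfrac23$ and recovers the middle case directly. The remaining two cases then follow by substituting the two values of the defect $\beta\begin{psmallmatrix}1 & n-12\\0&1\end{psmallmatrix}$ on either side of the discontinuity: the sign of $n-12$ selects which affine branch occurs, and the jump at $n=12$ is precisely what makes that value anomalous relative to both branches.

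The main obstacle is the careful bookkeeping of signs in the defect term: one must extract the exact value of Meyer's function on the parabolic total monodromy and reconcile the orientation conventions of the Seifert pairing (recall the factor $(-1)$ noted above) with those built into Matsumoto's formula, so that the correction is added with the correct sign on each side of $n=12$. For the range $n\le 12$ one can obtain an independent geometric handle on $\sigma(Y)$ by realizing $Y$, via Persson's classification \cite{ckfres}, as the complement of a neighbourhood of an $\mathrm{I}_{12-n}$ fibre in a rational elliptic surface and applying Novikov additivity to the signature $-8$; for $n>12$ no such complex-geometric model exists, so the uniform cocycle computation of Matsumoto is indispensable, and controlling it uniformly across the threshold $n=12$ is the delicate heart of the argument.
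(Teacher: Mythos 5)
Your skeleton is the same as the paper's: decompose $Y$ into the fibred part and the fibre neighbourhoods, apply Meyer's signature formula for torus bundles over surfaces with boundary together with (Novikov/Wall) additivity, and absorb the local contribution of each $\mathrm{I}_1$ fibre into a universal constant, leaving a defect term depending only on the total monodromy. Your calibration of the local constant $c=-\tfrac23$ by capping off the $n=12$ case to a rational elliptic surface is a legitimate alternative to the paper's citation of Matsumoto's computation $\phi(\tau_i)+\sigma(N_i)=-\tfrac23$. However, there is a genuine gap: you never determine the values of the defect $\beta$ (Meyer's function) on the parabolics $\begin{psmallmatrix}1&m\\0&1\end{psmallmatrix}$ for $m\neq 0$. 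This is not bookkeeping; it is the entire content of the proposition beyond the middle case. The paper takes these values directly from Meyer: $\phi\begin{psmallmatrix}1&d\\0&1\end{psmallmatrix}$ equals $1-d/3$, $0$, $-1-d/3$ according as $d<0$, $d=0$, $d>0$, and substituting $d=n-12$ into $\sigma(Y)=\phi(M_\infty)-\tfrac{2n}{3}$ is what produces the three cases. Your anchoring argument fixes only $c$ and $\beta(\mathrm{id})=0$; it gives no information about $\beta$ away from the identity, and the qualitative description (``affine with a sign jump'') cannot decide which affine branch goes with which side of $n=12$, i.e.\ cannot distinguish $5-n$ from $3-n$.

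That this deferral is fatal is shown by your own proposed cross-check. For $n\le 11$, realizing $Y$ as the complement of a tubular neighbourhood $N(\mathrm{I}_{12-n})$ of an $\mathrm{I}_{12-n}$ fibre in a rational elliptic surface $S$ (with $\sigma(S)=-8$) and applying Novikov additivity --- the gluing is along the full common boundary, so no Wall correction enters --- gives
\[
\sigma(Y)=\sigma(S)-\sigma\bigl(N(\mathrm{I}_{12-n})\bigr)=-8+(12-n-1)=3-n,
\]
since the intersection form of an $\mathrm{I}_k$ fibre neighbourhood is the negative semidefinite affine $A_{k-1}$ form, of rank $k$ and nullity $1$. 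This does not agree with the target value $5-n$ for $n<12$; the same tension appears if one computes the Seifert form on $\ker\phi$ for $n=3$ directly from the Gram matrix of $\mathrm{G}_3$, which yields a rank-one radical and hence $\sigma=0$ rather than $2$. Reconciling this requires precisely the orientation conventions (inner versus outer boundary curves, monodromy or its inverse, the sign built into the Seifert pairing) entering Meyer's and Matsumoto's formulas --- the analysis you have explicitly set aside as ``the delicate heart of the argument.'' Until you pin down $\beta$ on parabolics with signs consistent with the paper's conventions, your argument establishes only the value $\sigma(Y)=-8$ at $n=12$, and neither of the cases $n\neq 12$.
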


\begin{proof}
This proof is a simple minded generalization of a result of Matsumoto \cite[Theorem 6]{4mft2}. We first recall a result of Meyer \cite[Satz 5]{dsvf} which computes the $\sigma$-invariant of a torus bundle $\pi_0 \colon F \rightarrow B$ over an oriented, connected $2$-dimensional manifold with boundary $B$. For concreteness, assume that $B$ is a closed disc $\Delta$ with $n$ open discs $D_i$ removed from its interior. Decompose $\partial B = \partial \Delta \cup C_1 \cup \dots \cup C_{n}$ into its connected components and let $\tau_i,\tau_{\infty} \in \mathrm{SL}(2,\mathbb{Z})$ be the monodromies around $C_i$,  $\partial \Delta$ respectively, making sure to take each boundary curve oriented counterclockwise with respect to the orientation on $B$. Meyer \cite[\S 5]{dsvf} defines a class function 
\[
\phi \colon \mathrm{SL}(2,\mathbb{Z}) \longrightarrow (\tfrac{1}{3})\mathbb{Z},
\]
such that the value of $\phi(\tau_i)$ only depends on $\tau_i$ up to conjugation. Then \cite[Satz 5]{dsvf} states that
\begin{equation}\label{eq:mey}
\sigma(F) = \phi(\tau_{\infty}) + \sum_{i=1}^{n} \phi(\tau_i).
\end{equation}

Now let $\Sigma := \{x_1,\ldots,x_n\} \subset \Delta$ be the critical set of $\pi\colon Y \to \Delta$ and let $F_i$ be the singular fibre over $x_i$. If we let $D_i$ be small open discs around each $x_i$ and let $N_i := \pi^{-1}(D_i)$, then $F := Y \setminus \bigcup_{i=1}^n N_i$ is a torus fibration over $B := \Delta \setminus \bigcup_{i=1}^n D_i$, to which we may apply Equation \eqref{eq:mey} to compute the signature.

Once this is done, we may apply Novikov's additivity theorem or, more precisely, Wall's non-additivity theorem \cite{nas}, to show that
\[
\sigma(Y) = \sigma(F) + \sum_{i=1}^n \sigma(N_i) = \phi(\tau_{\infty}) + \sum_{i=1}^n(\phi(\tau_i) + \sigma(N_i)).
\]

Matsumoto \cite[Corollary 8.1]{4mft2} shows that for a singular fibre obtained by collapsing a simple closed curve on a smooth $2$-torus (Kodaira type $\mathrm{I}_1$), $\phi(\tau_i) + \sigma(N_i) = -\frac{2}{3}$. Therefore, since all of our fibres are of this type, we have that
\[
\sigma(Y) = \phi\begin{psmallmatrix} 1 & n-12 \\ 0 & 1 \end{psmallmatrix}  - \tfrac{2n}{3}
\]
According to \cite[pp. 261]{dsvf}, for $d \in \bZ$ we have that
\[
\phi\begin{pmatrix} 1 & d \\ 0 & 1 \end{pmatrix} = \begin{cases}
 1 - d/3 & \text{ if } d < 0 \\
 0 & \text{ if } d = 0 \\
 -1 - d/3 & \text{ if } d > 0
\end{cases}
\]
Putting this together with the equation for $\sigma(Y)$ above proves the claim.
\end{proof}

 We now combine these results to prove that condition (4) from Definition \ref{def:qdp} holds.

\begin{proposition}\label{prop:signature}
If $Y$ is a quasi LG model with $n$ singular fibres and chosen fibre $C$, then the signature of the N\'{e}ron-Severi lattice $\mathrm{NS}(\mathrm{H}_2(Y,C;\bZ))$ is $(1,n-3)$.
\end{proposition}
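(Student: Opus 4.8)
The plan is to reduce the claim to a single numerical identity and then read that identity off from the three topological inputs already assembled. Write $\mathrm{G} := \mathrm{H}_2(Y,C;\bZ)$ and $\mathrm{NS} := \NS(\mathrm{G})$. Since $\mathrm{G}$ is unimodular (Proposition~\ref{prop:plisZ}) and surface-like of rank $n$, its rank function is surjective and $\mathrm{NS}$ is a nondegenerate lattice of rank $n-2$. A nondegenerate form of rank $n-2$ has signature $(n_+,n_-)$ with $n_+ + n_- = n-2$, so the assertion that this signature equals $(1,n-3)$ is equivalent to the single statement
\[\sigma(\mathrm{NS}) = n_+ - n_- = 4-n.\]
Thus it suffices to compute the invariant $\sigma(\mathrm{NS})$.

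First I would record that the sign of the governing quantity is controlled by $n$. As observed after Definition~\ref{def:lg}, the identification of the total monodromy with the twist $T_\phi$, together with Proposition~\ref{prop:matrix}, gives $q(K,K) = 12-n$ for the canonical class $K=-[r(b)]$. This is exactly the quantity whose sign enters Lemma~\ref{lem:orth}, so the computation naturally breaks into the three regimes $n<12$, $n=12$, and $n>12$. In these three regimes Lemma~\ref{lem:orth}, applied to the element $K$ of the lattice $\mathrm{NS}$, gives respectively
\[\sigma(K^\perp) = \sigma(\mathrm{NS})-1, \qquad \sigma(K^\perp)=\sigma(\mathrm{NS}), \qquad \sigma(K^\perp)=\sigma(\mathrm{NS})+1.\]
Next, Lemma~\ref{lem:Y=K} identifies $\sigma(K^\perp)$ with the purely topological invariant $\sigma(Y)$, and Proposition~\ref{prop:matsu} supplies the value of $\sigma(Y)$ in each regime. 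Substituting these into the three displayed relations and solving for $\sigma(\mathrm{NS})$ returns the single value $4-n$, which by the reduction above is precisely the claim that $\mathrm{NS}$ has signature $(1,n-3)$.

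The substance of the argument is therefore not any one hard step but the careful reconciliation of several sign conventions, and this is where I expect the main difficulty to lie. Two sign reversals must be tracked simultaneously: the definition of $q$ as the negative of the Seifert form, and the relation $\sigma(Y)=-\sigma(\mathrm{N})$ used inside Lemma~\ref{lem:Y=K}. These interact with the case-dependent $\pm 1$ shifts of Lemma~\ref{lem:orth} and with the three-part formula of Proposition~\ref{prop:matsu}, and one must check that all of them conspire to produce the uniform answer $4-n$. The boundary case $n=12$ is the most delicate: there $q(K,K)=0$, so $K$ is isotropic and lies in $K^\perp$, the induced form on $K^\perp$ is degenerate, and one should confirm that Lemma~\ref{lem:orth} applies correctly in this isotropic situation before concluding $\sigma(\mathrm{NS})=\sigma(Y)$. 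Once the bookkeeping is verified across all three regimes, the proposition follows.
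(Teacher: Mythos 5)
Your strategy is the same as the paper's: reduce the statement to the single identity $\sigma(\NS(\mathrm{H}_2(Y,C;\bZ))) = 4-n$, then combine $q(K,K)=12-n$ (Proposition~\ref{prop:matrix}) with Lemma~\ref{lem:Y=K}, Lemma~\ref{lem:orth}, and Proposition~\ref{prop:matsu}. Your reduction is clean (and your remark that $\NS$ has rank $n-2$ quietly corrects a slip in the paper, whose proof says ``rank $n$''), and your three displayed relations are the correct applications of Lemma~\ref{lem:orth} as stated. The gap is precisely in the one step you did not carry out: the final substitution. Solving your relations gives $\sigma(\NS) = \sigma(Y)+1$ for $n<12$ and $\sigma(\NS) = \sigma(Y)-1$ for $n>12$; inserting the values from Proposition~\ref{prop:matsu} yields $6-n$ and $2-n$ respectively, not the uniform value $4-n$ you assert. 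Only the case $n=12$ comes out right. So the ``bookkeeping'' that you correctly identify as the substance of the argument in fact fails in two of the three regimes, and the proposition does not follow from the cited statements as they stand.

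The underlying reason is an inconsistency in the paper itself, which your faithful use of Lemma~\ref{lem:orth} exposes: the values in Proposition~\ref{prop:matsu} for $n<12$ and $n>12$ are interchanged. For instance, for $n=3$ the quasi LG model is the complement of a tubular neighbourhood of an $\mathrm{I}_9$ fibre in a rational elliptic surface $S$, and Novikov additivity gives $\sigma(S) = \sigma(Y) + \sigma(\nu(\mathrm{I}_9))$, i.e.\ $\sigma(Y) = -8 - (-8) = 0 = 3-n$, not $5-n=2$. (One can also check this directly: $\mathrm{H}_2(Y;\bZ)$ has rank two, generated by the fibre class and the junction $t_1 - 2t_2 + t_3$, and the intersection form on it vanishes identically.) With the corrected values $\sigma(Y) = 3-n$ for $n<12$, $-8$ for $n=12$, and $5-n$ for $n>12$, your argument does produce $4-n$ in every regime. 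The paper's own proof reaches $4-n$ despite quoting the swapped values because it contains a compensating sign slip when invoking Lemma~\ref{lem:orth} (it writes $\sigma(\NS)=\sigma(Y)-1$ in the case $n<12$, opposite to what its own lemma gives). To complete your proof you must therefore either recompute Proposition~\ref{prop:matsu}, tracking the orientation conventions entering Meyer's function, or verify $\sigma(Y)$ independently as above; as written, the claim that the three regimes ``conspire to produce the uniform answer $4-n$'' is false.
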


\begin{proof}
We know by Proposition \ref{prop:matrix} that $q(K, K) = 12-n$. Therefore, by Lemmas \ref{lem:Y=K} and \ref{lem:orth}, we see that 
\[
\sigma(\NS(\mathrm{H}_2(Y,C;\bZ))) = \begin{cases}\sigma(Y) - 1 & \text{ if } n < 12 \\
 \sigma(Y) & \text{ if } n = 12\\
\sigma(Y) - 1 & \text{ if } n > 12
\end{cases}
\]
Hence, by Proposition \ref{prop:matsu}, $\sigma(\NS(\mathrm{H}_2(Y,C;\bZ))) = 4-n$. Since $\NS(\mathrm{H}_2(Y,C;\bZ))$ is a nondegenerate lattice of rank $n$, it follows that its signature is $(1,n-3)$, as required.
\end{proof}

Putting everything together, we obtain the following result.

\begin{theorem} \label{thm:Lefschetzisqdp} Let $\pi\colon Y \to \Delta$ be a quasi Landau-Ginzburg model. Choose a fibre $C$ over a point in $\partial \Delta$ and a basis $(a,b)$ for $\mathrm{H}_1(C,\bZ) \cong \mathrm{E}$ so that Definition \ref{def:lg} holds. Then the asymptotic charge map $\phi \colon \mathrm{H}_2(Y,C;\bZ) \to \mathrm{H}_1(C,\bZ)$ is a quasi del Pezzo homomorphism.
\end{theorem}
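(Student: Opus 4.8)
The plan is to verify directly that $\phi$ satisfies the four conditions of Definition \ref{def:qdp}, drawing on the results accumulated throughout this subsection. Almost all of the work has already been done, so the proof is essentially one of assembly. First I would record that $\phi$ is spherical and relative $(-1)^0$-Calabi-Yau, which is precisely the content of Corollary \ref{cor:elfisCY}; this immediately yields condition (2) of Definition \ref{def:qdp} together with the sphericity required of a quasi del Pezzo homomorphism.

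For condition (1), I would work with the equivalent formulation (1$'$) of Remark \ref{rem:equivalentcondition}. By Definition \ref{def:lg}(2), $r(a)$ is primitive, so it remains only to check that $T_\phi(a) = a$. Proposition \ref{prop:plisZ}(3) identifies the twist $T_\phi$ with the total monodromy, which by Definition \ref{def:lg}(1) has matrix $\begin{psmallmatrix}1 & n-12 \\ 0 & 1 \end{psmallmatrix}$ in the basis $(a,b)$; this matrix fixes $a$, so $T_\phi(a) = a$. Proposition \ref{prop:issurfacelike} then gives that $\mathrm{H}_2(Y,C;\bZ)$ is surface-like with point-like vector $r(a)$, establishing condition (1). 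Condition (3) is immediate from Proposition \ref{prop:plisZ}(2), which supplies the exceptional basis of thimbles $([t_1],\ldots,[t_n])$ whose images $[v_i] = \phi([t_i])$ are primitive in $\mathrm{E}$.

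The remaining condition (4) is the one whose verification has demanded all the substantive effort, but by this point it too is in hand: Proposition \ref{prop:signature} computes the signature of $\mathrm{NS}(\mathrm{H}_2(Y,C;\bZ))$ to be $(1, n-3)$. Since $\mathrm{H}_2(Y,C;\bZ)$ has rank $n$, its N\'eron-Severi lattice has rank $n-2$, whence $(1,n-3) = (1,\, \rank(\mathrm{NS}(\mathrm{H}_2(Y,C;\bZ))) - 1)$, exactly as required by condition (4). With all four conditions in place, $\phi$ is a quasi del Pezzo homomorphism.

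The genuine obstacle in the overall argument lies entirely within condition (4), namely the signature statement of Proposition \ref{prop:signature}: tracing $\sigma(\mathrm{NS})$ back through Lemmas \ref{lem:Y=K} and \ref{lem:orth} to the topological invariant $\sigma(Y)$, and then evaluating $\sigma(Y)$ via the Meyer--Matsumoto computation of Proposition \ref{prop:matsu}. Since that work has already been completed, the present theorem is simply the synthesis of the four verifications above, and I would expect its proof to be short.
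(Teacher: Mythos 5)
Your proposal is correct and follows essentially the same route as the paper, which proves conditions (1)--(3) by combining Proposition \ref{prop:plisZ} and Corollary \ref{cor:elfisCY} with Definition \ref{def:lg} (via Proposition \ref{prop:issurfacelike}), and reserves the real work for condition (4) via Lemmas \ref{lem:Y=K} and \ref{lem:orth} and Propositions \ref{prop:matsu} and \ref{prop:signature}. Your assembly even makes explicit the rank count $\rank(\mathrm{NS}(\mathrm{H}_2(Y,C;\bZ))) = n-2$ needed to match $(1,n-3)$ with condition (4), a detail the paper glosses over.
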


We may therefore apply Theorem \ref{thm:main} in this setting. 

\begin{theorem} \label{thm:lgclass} Let $\pi\colon Y \to \Delta$ be a quasi Landau-Ginzburg model with $n$ singular fibres. Choose a fibre $C$ over a point in $\partial \Delta$ and a basis $(a,b)$ for $\mathrm{H}_1(C,\bZ) \cong \mathrm{E}$ so that Definition \ref{def:lg} holds. Then, after a sequence of Hurwitz moves and a global conjugation \textup{(}which preserves the total monodromy\textup{)}, $\pi\colon Y \to \Delta$ has monodromy factorization
\begin{align*}
\begin{psmallmatrix}1 & n-12 \\ 0 & 1 \end{psmallmatrix} &= M_{0,1}M_{3,1}M_{6,1}M_{1,0}^{n-3} = \begin{psmallmatrix}1 & 0 \\-1 & 1\end{psmallmatrix}\begin{psmallmatrix}-2 & 9 \\-1 & 4\end{psmallmatrix}\begin{psmallmatrix}-5 & 36 \\-1 & 7\end{psmallmatrix}\begin{psmallmatrix}1 & 1 \\0 & 1\end{psmallmatrix}^{n-3},\\
\begin{psmallmatrix}1 & -8 \\ 0 & 1 \end{psmallmatrix} &= M_{0,1}M_{2,1}M_{2,1}M_{4,1} = \begin{psmallmatrix}1 & 0 \\-1 & 1\end{psmallmatrix}\begin{psmallmatrix}-1 & 4 \\-1 & 3\end{psmallmatrix}\begin{psmallmatrix}-1 & 4 \\-1 & 3\end{psmallmatrix}\begin{psmallmatrix}-3 & 16 \\-1 & 5\end{psmallmatrix},
\end{align*}
when $n \geq 3$ and $n = 4$ respectively. Moreover, the two cases with $n=4$ are not equivalent up to Hurwitz moves and global conjugation.
\end{theorem}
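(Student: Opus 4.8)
The plan is to transport the classification of quasi del Pezzo homomorphisms (Theorem~\ref{thm:main}) across the dictionary supplied by Proposition~\ref{prop:plisZ}, translating each pseudolattice operation into its Lefschetz-fibration counterpart, and then to read off the two monodromy factorizations explicitly; the inequivalence of the $n=4$ cases will follow from an isomorphism invariant, namely the cokernel of the asymptotic charge map.

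First I would record the dictionary precisely. By Theorem~\ref{thm:Lefschetzisqdp} the asymptotic charge map $\phi\colon \mathrm{H}_2(Y,C;\bZ)\to\mathrm{H}_1(C,\bZ)\cong\mathrm{E}$ is a quasi del Pezzo homomorphism, and by Proposition~\ref{prop:plisZ} the thimble classes $([t_1],\dots,[t_n])$ form an exceptional basis satisfying Definition~\ref{def:qdp}, on which Hurwitz moves act as exceptional mutations, while the identification $\mathrm{H}_2(Y,C;\bZ)\cong\mathrm{Z}([v_1],\dots,[v_n])$ carries the factorization $T_\zeta=T_{\zeta_1}\cdots T_{\zeta_n}$ to the monodromy factorization, each $T_{\zeta_i}$ being the Dehn twist $\tau_i=M_{p_i,q_i}$ with $\phi([t_i])=p_ia+q_ib$. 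I also need to match the two remaining freedoms: a global conjugation is a change of basis of $\mathrm{H}_1(C,\bZ)\cong\mathrm{E}$, i.e.\ an automorphism $\varphi$ of $\mathrm{E}$, so that the pair (Hurwitz move, global conjugation) is exactly the pair $(\psi,\varphi)$ occurring in the definition of an isomorphism of quasi del Pezzo homomorphisms.

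Next I would invoke Theorem~\ref{thm:main}: after a series of mutations of $([t_1],\dots,[t_n])$ and an automorphism of $\mathrm{E}$ of the form $\begin{psmallmatrix}1&k\\0&1\end{psmallmatrix}$, the homomorphism $\phi$ with its exceptional basis becomes either $\zeta\colon\mathrm{Z}(b,3a+b,6a+b,a,\dots,a)\to\mathrm{E}$ (for any $n\geq3$) or $\zeta\colon\mathrm{Z}(b,2a+b,2a+b,4a+b)\to\mathrm{E}$ (for $n=4$). Under the dictionary these mutations are Hurwitz moves and the automorphism $\begin{psmallmatrix}1&k\\0&1\end{psmallmatrix}$ is a global conjugation; being unipotent and upper triangular it commutes with the total monodromy $\begin{psmallmatrix}1&n-12\\0&1\end{psmallmatrix}$, which justifies the parenthetical claim that the total monodromy is unchanged. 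Reading off $[v_i]=\phi([t_i])=p_ia+q_ib$ from the generators of the two $\mathrm{Z}(\cdots)$ pseudolattices and substituting into $M_{p_i,q_i}=\begin{psmallmatrix}1-p_iq_i&p_i^2\\-q_i^2&1+p_iq_i\end{psmallmatrix}$ yields the factorizations $M_{0,1}M_{3,1}M_{6,1}M_{1,0}^{n-3}$ and $M_{0,1}M_{2,1}M_{2,1}M_{4,1}$ together with the displayed $2\times2$ matrices; a direct product check (or the corollary to Theorem~\ref{thm:main}) confirms these equal $\begin{psmallmatrix}1&n-12\\0&1\end{psmallmatrix}$ and $\begin{psmallmatrix}1&-8\\0&1\end{psmallmatrix}$.

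Finally, for the inequivalence of the two $n=4$ factorizations I would argue contrapositively. If the corresponding models were related by Hurwitz moves and a global conjugation, the dictionary of the second paragraph would make their asymptotic charge maps isomorphic as quasi del Pezzo homomorphisms, via some $(\psi,\varphi)$ with $\varphi\phi_1=\phi_2\psi$; then $\operatorname{im}(\phi_2)=\varphi(\operatorname{im}(\phi_1))$, so $\varphi$ descends to an isomorphism $\coker(\phi_1)\cong\coker(\phi_2)$, and the cokernel is an invariant. But the first factorization realizes $f_3\oright\zeta$, which is surjective, while the second realizes $f_4$, whose cokernel is $\bZ/2\bZ$ (as noted following Proposition~\ref{prop:4classification}), so the two cannot be isomorphic and the models are inequivalent. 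I expect the main obstacle to be not any single computation but the careful verification of the dictionary in the second paragraph — in particular that global conjugation corresponds precisely to the $\mathrm{E}$-automorphisms allowed in Theorem~\ref{thm:main} and in the isomorphism of quasi del Pezzo homomorphisms, since it is this correspondence that simultaneously licenses the translation of the classification and the deployment of the cokernel as a distinguishing invariant.
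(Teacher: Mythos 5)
Your proposal is correct and follows essentially the same route as the paper, whose proof is a one-line appeal to Theorem \ref{thm:main}, Proposition \ref{prop:plisZ}, and the explicit Dehn twist matrices $M_{p,q}$; your second paragraph just spells out the dictionary the paper leaves implicit. Your cokernel argument for the inequivalence of the two $n=4$ cases is likewise the same observation the paper records immediately after Proposition \ref{prop:4classification} ($f_3\oright\zeta$ is surjective while $f_4$ has cokernel $\bZ/2\bZ$), and it correctly handles arbitrary global conjugation since any automorphism of $\mathrm{E}$ preserves the cokernel up to isomorphism.
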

\begin{proof} This is an immediate consequence of Theorem \ref{thm:main}, Proposition \ref{prop:plisZ}, and the explicit description of the Dehn twist matrices $M_{p,q}$.
\end{proof}

\begin{remark} The existence of two distinct monodromy factorizations when $n = 4$ was previously observed by Auroux \cite[Proposition 3.1]{fsseisf}, who also showed that the corresponding genus $1$ Lefschetz fibrations $\pi\colon Y \to \Delta$ are topologically distinct.
\end{remark}

The following corollary is an immediate consequence of Theorems \ref{thm:aurouxclass} and \ref{thm:lgclass}.

\begin{corollary} \label{cor:lgclass}  There is precisely one quasi Landau-Ginzburg model with $n$ singular fibres for each $n \geq 3$ and $n \neq 4$, up to diffeomorphism, and there are two when $n = 4$. 
\end{corollary}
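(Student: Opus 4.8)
The plan is to translate the count of quasi Landau--Ginzburg models, up to diffeomorphism, into a count of monodromy factorizations, up to Hurwitz equivalence and global conjugation, via Theorem~\ref{thm:aurouxclass}, and then to read off the answer from the normal forms supplied by Theorem~\ref{thm:lgclass}. First I would establish existence. For each $n \geq 3$ the product $M_{0,1}M_{3,1}M_{6,1}M_{1,0}^{n-3}$ from Theorem~\ref{thm:lgclass} is a factorization of $\begin{psmallmatrix}1 & n-12 \\ 0 & 1\end{psmallmatrix}$ into Dehn twist matrices, so by Theorem~\ref{thm:aurouxclass} it is the monodromy factorization of a genus $1$ Lefschetz fibration $\pi\colon Y \to \Delta$ with $n$ singular fibres. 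Its total monodromy has the form demanded by Definition~\ref{def:lg}(1), while by Proposition~\ref{prop:plisZ}(3) the pseudolattice $\mathrm{H}_2(Y,C;\bZ)$ is isomorphic to $\mathrm{Z}(b,3a+b,6a+b,a,\ldots,a)$, which is a quasi del Pezzo homomorphism; hence $r(a)$ is its primitive point-like vector and Definition~\ref{def:lg}(2) also holds, so $Y$ is a quasi LG model. Running the same argument with $M_{0,1}M_{2,1}M_{2,1}M_{4,1}$ produces a second quasi LG model when $n = 4$. This shows that at least one quasi LG model exists for each $n \geq 3$, and at least two when $n = 4$.

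Next I would bound the count from above. By Theorem~\ref{thm:lgclass}, the monodromy factorization of any quasi LG model with $n$ singular fibres becomes, after Hurwitz moves and a global conjugation, one of the listed standard forms; by Theorem~\ref{thm:aurouxclass} this means every such model is diffeomorphic to one arising from a standard form. When $n \neq 4$ there is a single standard form, so there is exactly one diffeomorphism class. When $n = 4$ there are two standard forms, and the final clause of Theorem~\ref{thm:lgclass} guarantees they are inequivalent under Hurwitz moves and global conjugation, so by Theorem~\ref{thm:aurouxclass} the two fibrations are not diffeomorphic, giving exactly two classes. Combining the upper bound with the existence statements above yields the stated counts.

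The argument is essentially bookkeeping once Theorems~\ref{thm:aurouxclass} and~\ref{thm:lgclass} are in hand, so I do not expect a substantive obstacle. The one point that genuinely requires care is the $n = 4$ case: without an inequivalence statement the two normal forms might \emph{a priori} present the same $4$-manifold, and it is precisely the distinctness assertion of Theorem~\ref{thm:lgclass}---itself descending from the fact that $f_3 \oright \zeta$ is surjective while $f_4$ has cokernel $\bZ/2\bZ$---that rules this out. Everything else reduces to observing that for $n \neq 4$ there is a unique normal form and then passing between the language of monodromy factorizations and that of diffeomorphism types through Theorem~\ref{thm:aurouxclass}.
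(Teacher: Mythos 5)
Your proposal is correct and follows exactly the paper's route: the paper proves this corollary as an immediate consequence of Theorems~\ref{thm:aurouxclass} and~\ref{thm:lgclass}, precisely the translation and bookkeeping you carry out. Your additional care in verifying existence (checking that the standard factorizations really do yield fibrations satisfying Definition~\ref{def:lg}, via Proposition~\ref{prop:plisZ}(3) and the quasi del Pezzo property of the standard pseudolattices) is a point the paper leaves implicit, and it is handled correctly.
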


\begin{remark} It is interesting to ask what happens if we relax the assumption on the relationship between the number of singular fibres of $Y$ and the total monodromy in Definition \ref{def:lg}. Indeed, suppose that $Y$ contains $n$ singular fibres and that the total monodromy has matrix $\begin{psmallmatrix}1 & -d \\ 0 & 1 \end{psmallmatrix}$, for some $d$. In this case, the defect $\delta(\mathrm{H}_2(Y,C;\bZ)) = d + n - 12$; we call such fibrations \emph{defective Landau-Ginzburg models}.

The classification of defective Landau-Ginzburg models poses an interesting question. As a first step, it is not difficult to show that the defect must be divisible by $12$. Indeed, as the abelianization of $\mathrm{SL}(2,\bZ)$ is $\bZ/12\bZ$ and Dehn twists are sent to $1 \in \bZ/12\bZ$ under this map, any factorization of the identity matrix $I_{2\times 2}$ into a product of $k$ Dehn twist matrices $M_{p,q}$ has $k$ divisible by $12$. Moreover, the monodromy factorization gives a factorization of $\begin{psmallmatrix}1 & -d \\ 0 & 1 \end{psmallmatrix}$ into a product of $n$ such matrices. 

If $d \geq 0$, we may write $I_{2\times 2} =  \begin{psmallmatrix}1 & -d \\ 0 & 1 \end{psmallmatrix} M_{1,0}^d$, so we must have $n+d$ divisible by $12$ in this case. On the other hand, if $d <0$, note that we may write $\begin{psmallmatrix}1 & -9 \\ 0 & 1 \end{psmallmatrix} = M_{0,1}M_{3,1}M_{6,1}$. Set $d = -9q + r$ for $q > 0$ and $0 \leq r < 9$. Then we may write 
\[\begin{psmallmatrix}1 & -d \\ 0 & 1 \end{psmallmatrix}\begin{psmallmatrix}1 & -9 \\ 0 & 1 \end{psmallmatrix}^qM_{1,0}^r = \begin{psmallmatrix}1 & -d \\ 0 & 1 \end{psmallmatrix}(M_{0,1}M_{3,1}M_{6,1})^qM_{1,0}^r = I_{2\times 2},\]
so $n + 3q + r \equiv n + d \equiv 0 \pmod {12}$. Thus the defect is divisible by $12$ in all cases.

When $d \leq 0$, it follows from Theorem \ref{thm:aurouxclass} and a result of Cadavid and V\'{e}lez \cite[Theorem 20]{nfscsfef} that a defective Landau-Ginzburg model is uniquely determined, up to diffeomorphism, by $n$ and $d$, and all cases are realized subject to the condition that the defect is divisible by $12$. However, in general the question of classifying defective Landau-Ginzburg models seems to be open. The obstacle to using our methods in this more general setting is the fact that defective Landau-Ginzburg models fail the signature condition (4) in Definition \ref{def:qdp}, which prevents us from applying Kuznetsov's theorem (Theorem \ref{thm:kuz}); an analogue of Kuznetsov's theorem without this signature condition seems difficult to prove.
\end{remark}

\section{Mirror Symmetry} \label{sec:mirrors}

The fact that the classification of quasi Landau-Ginzburg models parallels the classification of quasi del Pezzo surfaces may initially seem surprising, but in the context of mirror symmetry this result is very natural. Indeed, homological mirror symmetry predicts that if $Z$ and $\check{Z}$ are mirror objects, then there is a an algebraic category $\mathbf{D}(Z)$, whose objects are certain complexes of coherent sheaves on $Z$, which is derived equivalent to a symplectic category $\mathbf{F}(\check{Z})$, whose objects are built from decorated Lagrangians on $\check{Z}$.

For example, Polishchuk, Zaslow, Abouzaid, and Smith \cite{cmsec,hms4t} have proved that, if $C$ and $\check{C}$ are dual elliptic curves, then there is a derived equivalence between the bounded derived category $\mathbf{D}(C)$ of coherent sheaves on $C$ and the derived Fukaya category $\mathbf{F}(\check{C})$ of $\check{C}$. In particular, we find that the numerical Grothendieck groups $\mathrm{K}_0^{\mathrm{num}}(\mathbf{D}(C))$ and $\mathrm{K}_0^{\mathrm{num}}(\mathbf{F}(\check{C})) \cong \mathrm{H}_1(\check{C},\bZ)$ are isomorphic, which explains the fact (see Example \ref{ex:elliptic}) that both are isomorphic to the elliptic curve pseudolattice $\mathrm{E}$.

In our setting, given the results of Auroux, Katzarkov, and Orlov \cite{msdpsvccs} on homological mirror symmetry for del Pezzo surfaces, it does not seem unreasonable to postulate that the (homological) mirror to a  quasi del Pezzo surface $X$ with degree $K_X^2 = d$ should be a quasi Landau-Ginzburg model $\pi\colon Y \to \Delta$ with $12-d$ singular fibres. If this were the case, we would expect there to be a derived equivalence between the bounded derived category $\mathbf{D}(X)$ of coherent sheaves on $X$ and the \emph{derived Fukaya-Seidel category}  $\mathbf{F}(Y)$. Whilst we do not prove such a statement, we can exhibit its shadow on the level of numerical Grothendieck groups.

Indeed, if $Y$ is a quasi LG model with chosen fibre $C$, it follows from the discussion in \cite[Section 6.1]{fscslf} that  $\mathrm{K}_0^{\mathrm{num}}(\mathbf{F}(Y))$, with its Euler pairing, is isomorphic as a pseudolattice to $\mathrm{H}_2(Y,C;\bZ)$ with the Seifert pairing. Using this fact, the main results of this paper immediately give the following theorem, which may be thought of as a special case of a conjecture of Auroux \cite[Conjecture 7.7]{mstdcad}.

\begin{theorem} \label{thm:mirror} Let $X$ be a quasi del Pezzo surface, with smooth anticanonical divisor $C$ satisfying $C^2 = d$. Then there exists a quasi Landau-Ginzburg model $\pi\colon Y \to \Delta$ with $12-d$ singular fibres and distinguished fibre $\check{C}$, which is unique up to diffeomorphism, such that there are isomorphisms of pseudolattices making the following diagram commute
\[\xymatrix{ \mathrm{K}_0^{\mathrm{num}}(\mathbf{D}(X)) \ar[r]^{\sim} \ar[d]^{i^*}& \mathrm{K}_0^{\mathrm{num}}(\mathbf{F}(Y)) \ar[d]^{\phi} \\ \mathrm{K}_0^{\mathrm{num}}(\mathbf{D}(C)) \ar[r]^{\sim} & \mathrm{K}_0^{\mathrm{num}}(\mathbf{F}(\check{C})).
}\]
Here $i^*\colon \mathrm{K}_0^{\mathrm{num}}(\mathbf{D}(X)) \to \mathrm{K}_0^{\mathrm{num}}(\mathbf{D}(C))$ is the map induced by the inclusion $i\colon C \to X$ and $\phi$ is the asymptotic charge map.
\end{theorem}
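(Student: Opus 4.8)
The plan is to observe that both vertical maps of the square are quasi del Pezzo homomorphisms of the same rank, and then to transport the classification of Theorem \ref{thm:main} across the mirror. First I would record, via Example \ref{ex:qdp}, that the left-hand map $i^*\colon \mathrm{K}_0^{\mathrm{num}}(\mathbf{D}(X)) \to \mathrm{K}_0^{\mathrm{num}}(\mathbf{D}(C))$ is a quasi del Pezzo homomorphism, after identifying $\mathrm{K}_0^{\mathrm{num}}(\mathbf{D}(C)) \cong \mathrm{E}$ as in Example \ref{ex:elliptic}(1). The same example computes $\rank(\mathrm{K}_0^{\mathrm{num}}(\mathbf{D}(X))) = 12 - d =: n$, using that the defect vanishes (Example \ref{ex:dx}) and $q(K,K) = C^2 = d$ (Proposition \ref{prop:matrix}); since $X$ is quasi del Pezzo we have $d \leq 9$, so $n \geq 3$.

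Next I would apply Theorem \ref{thm:main} to $i^*$. After a suitable sequence of mutations and an automorphism of $\mathrm{E}$ of the form $\begin{psmallmatrix} 1 & k \\ 0 & 1 \end{psmallmatrix}$, the homomorphism $i^*$, equipped with an exceptional basis as in Definition \ref{def:qdp}, becomes isomorphic as a quasi del Pezzo homomorphism to one of the standard models $\zeta\colon \mathrm{Z}(b,3a+b,6a+b,a,\ldots,a) \to \mathrm{E}$ of rank $n$ from Example \ref{ex:directed}, or, when $n = 4$ (equivalently $d = 8$), possibly to $\zeta\colon \mathrm{Z}(b,2a+b,2a+b,4a+b) \to \mathrm{E}$.

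On the symplectic side, I would invoke Corollary \ref{cor:lgclass} to produce a quasi Landau-Ginzburg model $\pi\colon Y \to \Delta$ with $n$ singular fibres and distinguished fibre $\check C$; by Theorem \ref{thm:Lefschetzisqdp} the asymptotic charge map $\phi\colon \mathrm{H}_2(Y,\check C;\bZ) \to \mathrm{H}_1(\check C,\bZ) \cong \mathrm{E}$ is itself a quasi del Pezzo homomorphism, and by Theorem \ref{thm:lgclass} (through Proposition \ref{prop:plisZ}) it may be brought into exactly the same standard form of Example \ref{ex:directed}. Choosing $Y$ so that $\phi$ falls into the same isomorphism class as $i^*$, and composing the two standard-form isomorphisms, produces an isomorphism of pseudolattices $\psi\colon \mathrm{K}_0^{\mathrm{num}}(\mathbf{D}(X)) \to \mathrm{H}_2(Y,\check C;\bZ)$ together with an automorphism $\varphi$ of $\mathrm{E}$ satisfying $\varphi i^* = \phi \psi$. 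Finally, using the pseudolattice isomorphisms $\mathrm{K}_0^{\mathrm{num}}(\mathbf{F}(Y)) \cong \mathrm{H}_2(Y,\check C;\bZ)$ recorded in \cite[Section 6.1]{fscslf} and $\mathrm{K}_0^{\mathrm{num}}(\mathbf{F}(\check C)) \cong \mathrm{H}_1(\check C,\bZ) \cong \mathrm{E} \cong \mathrm{K}_0^{\mathrm{num}}(\mathbf{D}(C))$ from Example \ref{ex:elliptic}, this identity of homomorphisms translates verbatim into the asserted commuting square.

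The rank and defect bookkeeping is routine; the one genuinely delicate point is the case $n = 4$. Here both the algebraic and the symplectic sides carry two non-isomorphic quasi del Pezzo homomorphisms, and uniqueness of $Y$ up to diffeomorphism hinges on selecting the correct one. I would separate them by the cokernel of the homomorphism (trivial versus $\bZ/2\bZ$, as in the discussion following Proposition \ref{prop:4classification}): the value of this invariant for $i^*$ is determined by $X$ (for instance $\bF_1$ yields the surjective model, while $\bP^1 \times \bP^1$ and $\bF_2$ yield the $\bZ/2\bZ$ model), and it singles out exactly one of the two quasi Landau-Ginzburg models supplied by Corollary \ref{cor:lgclass}. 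This, together with the uniqueness assertion of that corollary, gives uniqueness of $Y$ up to diffeomorphism and completes the proof.
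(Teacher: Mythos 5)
Your proposal is correct and follows essentially the same route as the paper, which derives Theorem \ref{thm:mirror} ``immediately'' by combining Example \ref{ex:qdp}, Theorem \ref{thm:main}, Theorem \ref{thm:Lefschetzisqdp}, Theorem \ref{thm:lgclass} and Corollary \ref{cor:lgclass} with the identification $\mathrm{K}_0^{\mathrm{num}}(\mathbf{F}(Y)) \cong \mathrm{H}_2(Y,\check{C};\bZ)$ from \cite[Section 6.1]{fscslf}. Your resolution of the only delicate point, the case $n=4$, via the cokernel invariant (trivial for $\bF_1$ versus $\bZ/2\bZ$ for $\bP^1\times\bP^1$ and $\bF_2$) is exactly the paper's own way of distinguishing $f_3\oright\zeta$ from $f_4$.
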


 It can also be shown that Theorem \ref{thm:mirror} gives a one-to-one correspondence between complex deformation classes of quasi del Pezzo surfaces and diffeomorphism classes of quasi Landau-Ginzburg models, as one would expect from a mirror statement. In fact, mirror symmetry suggests that one should be able to go further, and relate the complex moduli of quasi del Pezzo surfaces to the space of symplectic structures on the corresponding quasi Landau-Ginzburg models, but this question is beyond the scope of this paper.
 \medskip
 
 We conclude with a brief note about noncommutative geometry. In \cite{mswppnd}, Auroux, Katzarkov, and Orlov discuss homological mirror symmetry for noncommutative projective planes. On the level of pseudolattices, however, there is no difference between the commutative and noncommutative settings in this case: there is only one surface-like pseudolattice of rank $3$ (c.f. Proposition \ref{prop:3classification}).
 
 If we proceed to pseudolattices of rank $4$, however, this situation changes. The classification of rank $4$ surface-like pseudolattices by de Thanhoffer de Volcsey and Van den Bergh \cite[Theorem A]{ameecl4} shows that there are infinitely many such pseudolattices: an infinite family parametrized by $n \in \bN$ plus a single exceptional case. Only two of these pseudolattices arise from commutative del Pezzo surfaces: the Hirzebruch surface $\bF_1$ (corresponding to $n = 1$ in the infinite family) and $\bP^1 \times \bP^1$ (corresponding to the exceptional case). This is essentially the content of Proposition \ref{prop:4classification}. 
 
However, results of Belmans and Presotto \cite{cncsecl4} suggest that the remaining cases $n \geq 2$ can be realized by noncommutative surfaces, obtained by constructing sheaves of maximal orders on $\bF_1$; intuitively, this construction may be thought of as blowing up $\mathbb{P}^2$ in a fat point. Interestingly, \cite[Proposition 39]{cncsecl4} shows that these orders are only del Pezzo, in the sense of \cite{dpops}, when $n = 2$.
 
Examining the proof of Proposition \ref{prop:4classification}, one may check that most of the conditions defining a quasi del Pezzo homomorphism still hold in this noncommutative setting (although, considering the discussion above, perhaps ``del Pezzo'' is not an appropriate choice of terminology in this setting); the only exception is the primitivity condition \ref{def:qdp}(3). Intuitively, this happens because the blow-ups considered in \cite{cncsecl4} occur at fat points, which correspond to imprimitive elements of the elliptic curve pseudolattice. This suggests that it may be possible to extend our definition of quasi del Pezzo homomorphisms to the noncommutative setting by dropping the primitivity condition  \ref{def:qdp}(3).

In the context of \cite{msdpsvccs,mswppnd}, it should also be possible to see how these ``fat point blow-ups'' behave under mirror symmetry for noncommutative surfaces. Na\"{i}vely, we might expect a non-commutative Landau-Ginzburg model to look like a genus $1$ Lefschetz fibration that contains some ``fat vanishing cycles'' (i.e. vanishing cycles that are not primitive), as this would give the required relaxing of the primitivity condition \ref{def:qdp}(3). However, it is unclear how such objects should be realized geometrically.

\bibliography{publications,preprints}

\providecommand{\bysame}{\leavevmode\hbox to3em{\hrulefill}\thinspace}
\providecommand{\MR}{\relax\ifhmode\unskip\space\fi MR }
\providecommand{\MRhref}[2]{%
  \href{http://www.ams.org/mathscinet-getitem?mr=#1}{#2}
}
\providecommand{\href}[2]{#2}
\begin{thebibliography}{dTdVdB16}

\bibitem[AGZV88]{sdm2}
V.~I. Arnold, S.~M. Guse{\u\i}n-Zade, and A.~N. Varchenko, \emph{Singularities
  of differentiable maps. {V}ol. {II}, monodromy and asymptotics of integrals},
  Monographs in Mathematics, vol.~83, Birkh{\"{a}}user, Boston, 1988.

\bibitem[AKO06]{msdpsvccs}
D.~Auroux, L.~Katzarkov, and D.~Orlov, \emph{Mirror symmetry for del {P}ezzo
  surfaces: Vanishing cycles and coherent sheaves}, Invent. Math. \textbf{166}
  (2006), no.~3, 537--582.

\bibitem[AKO08]{mswppnd}
\bysame, \emph{Mirror symmetry for weighted projective planes and their
  noncommutative deformations}, Ann. of Math. (2) \textbf{167} (2008), no.~3,
  867--943.

\bibitem[AL17]{sdgf}
R.~Anno and T.~Logvinenko, \emph{Spherical {DG}-functors}, J. Eur. Math. Soc.
  (JEMS) \textbf{19} (2017), no.~9, 2577--2656.

\bibitem[AS10]{hms4t}
M.~Abouzaid and I.~Smith, \emph{Homological mirror symmetry for the 4-torus},
  Duke Math. J. \textbf{152} (2010), no.~3, 373--440.

\bibitem[Aur05]{sclf}
D.~Auroux, \emph{A stable classification of {L}efschetz fibrations}, Geom.
  Topol. \textbf{9} (2005), 203--217.

\bibitem[Aur07]{mstdcad}
\bysame, \emph{Mirror symmetry and {T}-duality in the complement of an
  anticanonical divisor}, J. G{\"{o}}kova Geom. Topol. \textbf{1} (2007),
  51--91.

\bibitem[Aur15]{fsseisf}
\bysame, \emph{Factorizations in {$SL(2,\mathbb{Z})$} and simple examples of
  inequivalent {S}tein fillings}, J. Symplectic Geom. \textbf{13} (2015),
  no.~2, 261--277.

\bibitem[BK15]{cblfsfg}
R.~{\.{I}}. Baykur and S.~Kamada, \emph{Classification of broken {L}efschetz
  fibrations with small fiber genera}, J. Math. Soc. Japan \textbf{67} (2015),
  no.~3, 877--901.

\bibitem[Bon04]{sgtbfbg}
A.~I. Bondal, \emph{A symplectic groupoid of triangular bilinear forms and the
  braid group}, Izv. Ross. Akad. Nauk Ser. Mat. \textbf{68} (2004), no.~4,
  19--74.

\bibitem[BP93]{hpaamh}
A.~I. Bondal and A.~E. Polishchuk, \emph{Homological properties of associative
  algebras: the method of helices}, Izv. Ross. Akad. Nauk Ser. Mat. \textbf{57}
  (1993), no.~2, 3--50.

\bibitem[BP18]{cncsecl4}
P.~Belmans and D.~Presotto, \emph{Construction of non-commutative surfaces with
  exceptional collections of length 4}, J. Lond. Math. Soc. (2) \textbf{98}
  (2018), no.~1, 85--103.

\bibitem[CK03]{dpops}
D.~Chan and R.~S. Kulkarni, \emph{{d}el {P}ezzo orders on projective surfaces},
  Adv. Math. \textbf{173} (2003), no.~1, 144--177.

\bibitem[CT88]{asdps}
D.~F. Coray and M.~A. Tsfasman, \emph{Arithmetic on singular del {P}ezzo
  surfaces}, Proc. London Math. Soc. (3) \textbf{57} (1988), no.~1, 25--87.

\bibitem[CV09]{nfscsfef}
C.~A. Cadavid and J.~D. V{\'{e}}lez, \emph{Normal factorization in
  {$\mathrm{SL}(2,\mathbb{Z})$} and the confluence of singular fibers in
  elliptic fibrations}, Beitr{\"{a}}ge Algebra Geom. \textbf{50} (2009), no.~2,
  405--423.

\bibitem[Dem80]{sdp}
M.~Demazure, \emph{Surfaces de del {P}ezzo}, S{\'{e}}minaire sur les
  Singularit{\'{e}}s des Surfaces (M.~Demazure, H.~Pinkham, and B.~Teissier,
  eds.), Lecture Notes in Math., vol. 777, Springer, 1980, pp.~21--69.

\bibitem[dTdVdB16]{ameecl4}
L.~de~Thanhoffer~de Volcsey and M.~Van den Bergh, \emph{On an analogue of the
  {M}arkov equation for exceptional collections of length 4}, Preprint, July
  2016, \texttt{arXiv:1607.04246}.

\bibitem[DZ99]{sjalar}
O.~DeWolfe and B.~Zwiebach, \emph{String junctions for arbitrary {L}ie-algebra
  representations}, Nuclear Phys. B \textbf{541} (1999), no.~3, 509--565.

\bibitem[EL16]{feclbdps}
A.~Elagin and V.~Lunts, \emph{On full exceptional collections of line bundles
  on del {P}ezzo surfaces}, Mosc. Math. J. \textbf{16} (2016), no.~4, 691--709.

\bibitem[FYY00]{mwlsj}
M.~Fukae, Y.~Yamada, and S.-K. Yang, \emph{Mordell-{W}eil lattice via string
  junctions}, Nuclear Phys. B \textbf{572} (2000), no.~1-2, 71--94.

\bibitem[GGM85]{dmfpsscn}
A.~Galligo, M.~Granger, and P.~Maisonobe, \emph{{${\mathscr{D}}$}-modules et
  faisceaux pervers dont le support singulier est un croisement normal}, Ann.
  Inst. Fourier (Grenoble) \textbf{35} (1985), no.~1, 1--48.

\bibitem[GHS16]{gtsj}
A.~Grassi, J.~Halverson, and J.~L. Shaneson, \emph{Geometry and topology of
  string junctions}, J. Singul. \textbf{15} (2016), 36--52.

\bibitem[GL16]{sfctb}
M.~Golla and P.~Lisca, \emph{On {S}tein fillings of contact torus bundles},
  Bull. Lond. Math. Soc. \textbf{48} (2016), no.~1, 19--37.

\bibitem[GS99]{4mkc}
R.~E. Gompf and A.~I. Stipsicz, \emph{{$4$}-manifolds and {K}irby calculus},
  Graduate Studies in Mathematics, vol.~20, American Mathematical Society,
  1999.

\bibitem[KS14]{ps}
M.~Kapranov and V.~Schechtman, \emph{Perverse schobers}, Preprint, November
  2014, \texttt{arXiv:1411.2772}.

\bibitem[KS16]{psgs}
\bysame, \emph{Perverse sheaves and graphs on surfaces}, Preprint, January
  2016, \texttt{arXiv:1601.01789}.

\bibitem[Kuz17]{ecslc}
A.~G. Kuznetsov, \emph{Exceptional collections in surface-like categories}, Sb.
  Math. \textbf{208} (2017), no.~9, 1368--1398.

\bibitem[Mar80]{fqbi}
A.~Markoff, \emph{Sur les formes quadratiques binaires ind{\'{e}}finies}, Math.
  Ann. \textbf{17} (1880), no.~3, 379--399.

\bibitem[Mat83]{4mft2}
Y.~Matsumoto, \emph{On {$4$}-manifolds fibered by tori. {II}}, Proc. Japan
  Acad. Ser. A Math. Sci. \textbf{59} (1983), no.~3, 100--103.

\bibitem[Mey73]{dsvf}
W.~Meyer, \emph{Die {S}ignatur von {F}l{\"{a}}chenb{\"{u}}ndeln}, Math. Ann.
  \textbf{201} (1973), 239--264.

\bibitem[Moi77]{cscscpp}
B.~Moishezon, \emph{Complex surfaces and connected sums of complex projective
  planes}, Lecture Notes in Math., vol. 603, Springer-Verlag, 1977.

\bibitem[Orl92]{pbmtdccs}
D.~Orlov, \emph{Projective bundles, monoidal transformations, and derived
  categories of coherent sheaves}, Izv. Ross. Akad. Nauk Ser. Mat. \textbf{56}
  (1992), no.~4, 852--862.

\bibitem[Per90]{ckfres}
U.~Persson, \emph{Configurations of {K}odaira fibers on rational elliptic
  surfaces}, Math. Z. \textbf{205} (1990), no.~1, 1--47.

\bibitem[Per18]{caesrs}
M.~Perling, \emph{Combinatorial aspects of exceptional sequences on (rational)
  surfaces}, Math. Z. \textbf{288} (2018), no.~1-2, 243--286.

\bibitem[PZ98]{cmsec}
A.~Polishchuk and E.~Zaslow, \emph{Categorical mirror symmetry: the elliptic
  curve}, Adv. Theor. Math. Phys. \textbf{2} (1998), no.~2, 443--470.

\bibitem[Sug16]{fscslf}
S.~Sugiyama, \emph{On the {F}ukaya-{S}eidel categories of surface {L}efschetz
  fibrations}, Preprint, July 2016, \texttt{arXiv:1607.02263}.

\bibitem[Via17]{ecnsls}
C.~Vial, \emph{Exceptional collections, and the {N}{\'{e}}ron-{S}everi lattice
  for surfaces}, Adv. Math. \textbf{305} (2017), 895--934.

\bibitem[Wal69]{nas}
C.~T.~C. Wall, \emph{Non-additivity of the signature}, Invent. Math. \textbf{7}
  (1969), 269--274.

\end{thebibliography}
\bibliographystyle{amsalpha}
\end{document}